\newtheorem{thm}{Theorem}
\newtheorem{lem}[thm]{Lemma}
\newtheorem{cor}[thm]{Corollary}
\newtheorem{pro}[thm]{Proposition}
\theoremstyle{definition}
\newtheorem{definition}{Definition}
\newtheorem{remark}{Remark}
\def\blue{\textcolor{blue}}
\def\red{\textcolor{red}}
\numberwithin{equation}{section}
\def\maj{\operatorname{maj}}
\def\exc{\operatorname{exc}}
\def\Exc{\operatorname{EXC}}
\def\fmaj{\operatorname{fmaj}}
\def\fexc{\operatorname{fexc}}
\def\asc{\operatorname{asc}}
\def\des{\operatorname{des}}
\def\Des{\operatorname{DES}}
\def\rmaj{\operatorname{rmaj}}
\def\fmaf{\operatorname{fmaf}}
\def\maf{\operatorname{maf}}
\def\inv{\operatorname{inv}}
\def\lec{\operatorname{lec}}
\def\flec{\operatorname{flec}}
\def\pix{\operatorname{pix}}
\def\wt{\operatorname{wt}}
\def\ps{{\bf ps}}
\def\cs{\operatorname{csum}}
\def\inc{\operatorname{inc}}
\def\Dex{\operatorname{DEX}}
\def\DEX{\operatorname{DEX}}
\def\Rise{\operatorname{RISE}}
\def\Rec{\operatorname{REC}}
\def\Dec{\operatorname{DEC}}
\def\Inc{\operatorname{INC}}
\def\Dex{\operatorname{DEX}}
\def\TB{\operatorname{TB}}
\def\fix{\operatorname{fix}}
\def\col{\operatorname{col}}
\def\length{\operatorname{length}}
\def\Abs{\operatorname{Abs}}
\def\ai{\operatorname{ai}}
\def\rix{\operatorname{rix}}
\def\fdes{\operatorname{fdes}}
\def\st{\operatorname{st}}
\def\Com{\operatorname{Com}}
\def\Par{\operatorname{Par}}
\def\x{{\bf x}}
\def\k{\kappa}
\def\v{{\bf v}}
\def\S{\mathfrak{S}}
\def\o{\mathfrak{o}}
\def\N{\mathbb N}
\def\Q{\mathbb Q}
\def\P{\mathbb P}
\begin{document}

\title[Colored Eulerian quasisymmetric functions]{On some colored Eulerian quasisymmetric \\ functions}

\begin{abstract}
Recently, Hyatt introduced some colored Eulerian quasisymmetric function to study the joint distribution of excedance number and major index on colored permutation groups. 
We show how Hyatt's generating function formula for the fixed point colored Eulerian quasisymmetric functions  can be deduced from the Decrease value theorem of Foata and Han. Using this generating function formula, we prove two symmetric function generalizations of the Chung-Graham-Knuth symmetrical Eulerian identity for  some flag Eulerian quasisymmetric functions, which are specialized to the flag excedance numbers. Combinatorial proofs of those symmetrical identities are also constructed. 

We also study some other properties of the flag Eulerian quasisymmetric functions. In particular, we confirm a recent conjecture of Mongelli [Journal of  Combinatorial Theory, Series A, 120 (2013) 1216--1234] about the unimodality of the generating function of the flag excedances over the type B derangements.
Moreover, colored versions of the hook factorization and  admissible inversions of permutations are found, as well as a new recurrence formula for the $(\maj-\exc,\fexc)$-$q$-Eulerian polynomials. 

We introduce a colored analog of Rawlings major index on colored permutations and obtain an interpretation of the colored Eulerian quasisymmetric functions as sums of some fundamental quasisymmetric functions related with them, by applying Stanley's $P$-partition theory and a decomposition of the Chromatic quasisymmetric functions due to Shareshian and Wachs.
\end{abstract}

\keywords{Statistics; Colored permutations; Colored Eulerian quasisymmetric functions; Symmetrical Eulerian identities; Hook factorization; admissible inversions; Rawlings major index}

\author{Zhicong Lin}
\address[Zhicong Lin]{Universit\'{e} de Lyon; Universit\'{e} Lyon 1; Institut Camille Jordan; UMR 5208 du CNRS; 43, boulevard du 11 novembre 1918, F-69622 Villeurbanne Cedex, France}
\email{lin@math.univ-lyon1.fr}

%\date{April 2013}

\maketitle

\tableofcontents

%%%%%%%%%%%%%%%%%%
\section{Introduction}
%%%%%%%%%%%%%%%%%%

%
%The \emph{Eulerian polynomials} $A_n(t):=\sum_{k=0}^{n}A_{n,k} t^k$ are defined by the exponential generating function 
%\begin{align}\label{eq:def1}
%\sum_{n\geq 0}A_n(t)\frac{z^n}{n!}=\frac{(1-t)e^{z}}{e^{zt}-te^{z}}.
%\end{align}
%The coefficients $A_{n,k}$ are called
%\emph{Eulerian numbers}. 
A permutation of $[n]:=\{1,2,\ldots,n\}$ is a bijection $\pi : [n]\rightarrow[n]$. Let $\S_n$ denote the set of permutations of $[n]$. For
each $\pi\in\S_n$, a value $i$, $1\leq i\leq n-1$, is an \emph{excedance} (resp.~\emph{descent}) of $\pi$ if $\pi(i)>i$ (resp.~$\pi(i)>\pi(i+1)$). Denote by $\exc(\pi)$ and $\des(\pi)$ the number of excedances and descents of $\pi$, respectively. The classical {\em Eulerian number}, which we will denote by $A_{n,k}$, counts the number of  permutations in $\S_n$ with $k$ excedances (or $k$ descents). The Eulerian numbers arise in a variety of contexts in mathematics and have many other remarkable properties; see~\cite{fo} for a informative historical introduction.  
 
There are not so many combinatorial identities for Eulerian numbers comparing with other sequences such as binomial coefficients or Stirling numbers. Nevertheless, Chung, Graham and Knuth~\cite{cgk} proved the following  symmetrical identity:
\begin{equation}\label{cgk:symiden}
\sum_{k\geq 1}{a+b\choose k}A_{k,a-1}=\sum_{k\geq 1}{a+b\choose k}A_{k,b-1}
\end{equation}
for $a,b\geq1$.  Recall that  the {\em major index}, $\maj(\pi)$, of a permutation $\pi\in\S_n$ is the sum of all the descents of $\pi$, i.e.,
$\maj(\pi):=\sum_{\pi_i>\pi_{i+1}}i$. Define the {\em$q$-Eulerian numbers} $A_{n,k}(q)$ by 
$
A_{n,k}(q):=\sum_{\pi} q^{(\maj-\exc)\pi}
$
summed over all permutations $\pi\in\S_n$ with $\exc(\pi)=k$.
 As usual, the {\em$q$-shifted factorial} $(a;q)_n :=\prod_{i=0}^{n-1}(1-aq^i)$ and the {\em $q$-binomial coefficients} ${n\brack k}_q$ are defined by 
$
{n\brack k}_q:=\frac{(q;q)_n }{(q;q)_{n-k}(q;q)_k}.
$
 A $q$-analog of~\eqref{cgk:symiden}  involving both 
 ${n\brack k}_q$ and  $A_{n,k}(q)$ was proved in~\cite{hlz}, by making use of an exponential generating function formula  due to Shareshian and Wachs~\cite{sw}:
 \begin{equation}\label{fixversion}
\sum_{n\geq0}A_n(t,q)\frac{z^n}{(q;q)_n}=\frac{(1-t)e(z;q)}{e(tz;q)-te(z;q)},
\end{equation}
where $A_n(t,q)$ is the {\em $q$-Eulerian polynomial} $\sum_{k=0}^{n-1}A_{n,k}(q)t^k$ and  $e(z;q)$ is the 
{\em $q$-exponential function} $\sum_{n\geq 0}\frac{z^n}{(q;q)_n}.$
 Shareshian and Wachs obtained~\eqref{fixversion} by introducing certain quasisymmetric functions (turn out to be symmetric functions), called {\em Eulerian quasisymmetric functions}, such that applying the stable principal specialization yields the $q$-Eulerian numbers.
% Recently, 
%
%There are two different $q$-analogs of~\eqref{eq:def1} with combinatorial meanings exist in the literature (see~\cite{st1,sw}). Recall that the \emph{major index}, $\maj(\pi)$, of a permutation $\pi\in \S_n$ is the sum of all the descents of $\pi$, i.e.,
%$\maj(\pi):=\sum_{\pi(i)>\pi(i+1)}i.$
%An element $i\in[n]$ is a {\em fixed point} of $\pi\in\S_n$ if $\pi(i)=i$ and we denote by $\fix(\pi)$ the number of fixed points of $\pi$. The following latest   $q$-extension of~\eqref{eq:def1} was proved by Shareshian and Wachs~\cite{sw}:
%\begin{equation}\label{fixversion}
%\sum_{n\geq0}A_n(t,r,q)\frac{z^n}{(q;q)_n}=\frac{(1-t)e(rz;q)}{e(tz;q)-te(z;q)},
%\end{equation}
%where 
%$(q;q)_n :=\prod_{i=1}^{n}(1-q^i)$, $e(z;q)$ is the 
%$q$-exponential function 
%$
%\sum_{n\geq 0}\frac{z^n}{(q;q)_n} 
%$
%and $A_n(t,r,q)$ is the {\em $(q,r)$-Eulerian polynomial} defined as
%\begin{equation*}\label{fix-v}
%A_{n}(t,r,q):=\sum_{\pi\in \S_n}t^{\exc\,\pi}r^{\fix\,\pi}q^{(\maj-\exc)\,\pi}.
%\end{equation*}

Let $l$ be a fixed positive integer throughout this paper. Now consider the wreath product $C_l\wr\S_n$ of the cyclic group $C_l$ of order $l$ by the symmetric group $\S_n$ of order $n$.  The group $C_l\wr\S_n$ is also known as the {\em colored permutation group} and reduces to the permutation group $\S_n$ when $l=1$. It is worth to note that, Foata and Han~\cite{fh2} studied various statistics on words and obtain a factorial generating function formula implies~\eqref{fixversion} for the quadruple distribution, involving the number of fixed points, the excedance number, the descent number and the major index, on permutations and further generalized to colored permutations~\cite{fh3}. Recently, in order to generalize~\eqref{fixversion} to colored permutation groups, Hyatt~\cite{hy} introduced some {\em colored Eulerian quasisymmetric functions} (actually symmetric functions), which are generalizations of the Eulerian quasisymmetric functions. The starting point for the present paper is the attempt to obtain a symmetric function generalization of~\eqref{cgk:symiden} for colored permutation groups.

The most refined version of colored Eulerian quasisymmetric functions are {\em cv-cycle type colored Eulerian quasisymmetric functions} $Q_{\check{\lambda},k}$, where $\check{\lambda}$ is a particular cv-cycle type. They are defined by first associating a fundamental quasisymmetric function with each colored permutation and then summing these fundamental quasisymmetric functions over colored permutations with cv-cycle type $\check{\lambda}$ and $k$ excedances. The precise definition of $Q_{\check{\lambda},k}$ is given in Section~\ref{st:color}.  It was announced in~\cite{hy} that $Q_{\check{\lambda},k}$ is in fact a symmetric function. This follows from the colored ornament interpretation of $Q_{\check{\lambda},k}$ and the plethysm inversion formula. But more importantly, we will give a combinatorial proof of this fact which is needed in the bijective proof of Theorem~\ref{sym:iden1} below.

Another interesting Eulerian quasisymmetric function is the {\em fixed point colored Eulerian quasisymmetric function} $Q_{n,k,\Vec{\alpha},\Vec{\beta}}$, for $\Vec{\alpha}\in\N^l$ and $\Vec{\beta}\in\N^{l-1}$, which can be defined as certain sum of $Q_{\check{\lambda},k}$. The main result in~\cite{hy} is a generating function formula (see Theorem~\ref{hyatt}) for  $Q_{n,k,\Vec{\alpha},\Vec{\beta}}$, which when applying the stable 
 principal specialization would yield a generalization of~\eqref{fixversion} for the joint distribution of excedance number and major index on colored permutations. 
 This generating function formula was obtained through three main steps. 
Firstly, a colored analog of the Gessel-Reutenauer bijection~\cite{gr} is used to give the colored ornaments characterization of $Q_{\check{\lambda},k}$; secondly, the Lyndon decomposition is used to give the colored banners characterization of $Q_{\check{\lambda},k}$; finally, 
the generating function formula is derived by establishing a recurrence formula using the interpretation of $Q_{\check{\lambda},k}$ as colored banners. The recurrence formula in step 3 is obtained through a complicated generalization of a bijection of Shareshian-Wachs~\cite{sw}, so it would be reasonable to expect a simpler approach. We will show how this generating function formula (actually the step 3)  can be deduced directly from the Decrease value theorem on words due to Foata and Han~\cite{fh4}. 
 
 We modify the fixed point Eulerian quasisymmetric functions to some $Q_{n,k,j}$ that we call {\em flag Eulerian quasisymmetric functions}, which are also generalizations of  Shareshian and Wachs' Eulerian quasisymmetric functions and would specialize to  the {\em flag excedance numbers} studied in~\cite{bg,fh3}. The generating function formula for $Q_{n,k,j}$ follows easily from the generating function formula of $Q_{n,k,\Vec{\alpha},\Vec{\beta}}$, and is used to prove the following two symmetric function generalizations of~\eqref{cgk:symiden} involving both the {\em complete homogeneous symmetric functions} $h_n$ and the flag Eulerian quasisymmetric functions $Q_{n,k,j}$.

\begin{thm} \label{sym:iden1}
For $a,b\geq1$ and $j\geq0$ such that $a+b+1=l(n-j)$, 
\begin{equation*}
\sum_{i\geq0}h_{i}Q_{n-i,a,j}=\sum_{i\geq0}h_iQ_{n-i,b,j}.
\end{equation*}
 \end{thm}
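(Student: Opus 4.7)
The plan is to derive the identity from an explicit generating function for the flag Eulerian quasisymmetric functions, and then extract it from a palindromic-type symmetry in the variable $t$. First I would use Hyatt's formula (Theorem~\ref{hyatt}) for the fixed point colored Eulerian quasisymmetric functions $Q_{n,k,\Vec{\alpha},\Vec{\beta}}$ and specialize the color variables $\Vec{\alpha}$, $\Vec{\beta}$ to obtain a closed form for
\[
\mathcal{F}(t,s,z) \;=\; \sum_{n,k,j\geq 0} Q_{n,k,j}\, t^k\,s^j\,z^n,
\]
where $s$ marks the flag parameter $j$. The introduction advertises this as essentially a substitution, since by definition $Q_{n,k,j}$ is a weighted sum of fixed point colored Eulerian quasisymmetric functions.

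Next I would multiply by $H(z):=\sum_{i\geq 0} h_i\,z^i$ to obtain
\[
\mathcal{G}(t,s,z) \;=\; H(z)\,\mathcal{F}(t,s,z) \;=\; \sum_{n,k,j\geq 0}\Bigl(\sum_{i\geq 0} h_i\,Q_{n-i,k,j}\Bigr)\,t^k\,s^j\,z^n.
\]
The goal is then to prove a functional equation of the form $\mathcal{G}(1/t,s,tz)=\mathcal{G}(t,s,z)$, with the precise rescaling of $z$ by a power of $t$ encoding the exponent $l(n-j)-1$. Comparing coefficients of $s^j z^n$ on both sides forces the polynomial $\sum_k\bigl(\sum_i h_i\,Q_{n-i,k,j}\bigr)t^k$ to be palindromic of degree $l(n-j)-1$, which is exactly the statement of the theorem.

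The main obstacle I expect is the functional equation itself. Hyatt's formula is of the shape $(1-t)N/(D-tN)$, where $N$ and $D$ are products of symmetric-function analogs of the $q$-exponential over the $l$ color classes, differing by a $z\leftrightarrow tz$ substitution on one of the factors. The role of the prefactor $H(z)$ is precisely to absorb the discrepancy between $N|_{z\mapsto tz}$ and $D$ after sending $t\to1/t$: concretely, products such as $H(z)H(tz)^{l-1}$ and $H(z)^{l-1}H(tz)$ (or close variants, with the variable $s$ playing a role through the non-fixed-point color classes) should be interchanged by the substitution. Bookkeeping the $l$ factors and checking that the rescaling leaves the exponent $l(n-j)-1$ invariant is where the actual computation lives.

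A parallel combinatorial proof, promised in the excerpt, would proceed by interpreting $\sum_i h_i\,Q_{n-i,a,j}$ as a generating function over pairs consisting of a colored permutation and a weakly increasing padding of total length $n$, and constructing an involution on these pairs that fixes the number of fixed points $j$ while sending the colored excedance count $a$ to $l(n-j)-1-a$. The combinatorial proof that each cv-cycle type component $Q_{\check{\lambda},k}$ is a symmetric function, flagged earlier in the paper as a prerequisite, should supply the cycle-level involution that drives this construction.
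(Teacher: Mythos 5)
Your analytic plan starts exactly where the paper's first proof starts, namely with the specialization of Hyatt's formula giving $\sum_{n,k,j}Q_{n,k,j}t^k s^j z^n=(1-t)H(sz)/\bigl(H(t^lz)-tH(z)\bigr)$, but the step that carries all the content --- the functional equation --- is wrong as stated, in two ways. First, rescaling $z$ alone by a power of $t$ can never encode a center of symmetry $l(n-j)-1$ that depends on $j$; the fixed-point variable $s$ must be rescaled too. Second, even with the right rescaling the clean equation you aim for does not hold: writing $\mathcal{G}(t,s,z):=H(z)\sum_{n,k,j}Q_{n,k,j}t^ks^jz^n$, the identity that actually holds is
\begin{equation*}
\mathcal{G}\bigl(1/t,\;t^{-l}s,\;t^{l}z\bigr)\;=\;t\,\mathcal{G}(t,s,z)\;+\;(1-t)H(sz),
\end{equation*}
with an extra factor $t$ and an inhomogeneous term. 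The correction term only affects the coefficients of $s^nz^n$, and the hypothesis $a+b+1=l(n-j)$ with $a,b\geq1$ forces $j<n$, so it is harmless: comparing coefficients of $s^jz^n$ for $j<n$ gives $\sum_k P_k\,t^{l(n-j)-k}=\sum_k P_k\,t^{k+1}$ with $P_k=\sum_i h_iQ_{n-i,k,j}$, which is the palindromicity you want and yields the theorem. So your plan is repairable, but the equation $\mathcal{G}(1/t,s,tz)=\mathcal{G}(t,s,z)$ you propose to prove is false, and "bookkeeping the $l$ factors" will not rescue it without these modifications. The paper sidesteps guessing a functional equation: it cross-multiplies the generating function, extracts the coefficient of $r^jt^kz^n$ to get $\sum_i h_iQ_{n-i,k-li,j}=\sum_i h_iQ_{n-i,k-1,j}$ for $j<n$, and then applies the separate symmetry $Q_{m,k,j}=Q_{m,l(m-j)-k,j}$ of the unpadded flag Eulerian quasisymmetric functions (proved by an involution on colored ornaments, or read off from the $h$-positive expansion); this is the same computation as yours, organized so that the $j$-dependent center enters through that symmetry rather than through a substitution in $s$.

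Your combinatorial sketch also misses a half-integer subtlety. An involution that fixes $j$ and complements the excedance statistic is precisely the ornament involution giving $Q_{n,k,j}=Q_{n,l(n-j)-k,j}$, whose center is $l(n-j)/2$; but the padded sums $\sum_i h_iQ_{n-i,k,j}$ are symmetric about $(l(n-j)-1)/2$, and the shift by $1/2$ is exactly what the $h_i$-padding has to absorb. The paper's bijective proof does this by an involution $\theta$ on colored banners that treats the last letter specially, proving $Q_{n,k}=Q_{n,ln-k-1}$ with the fixed points summed out, and then reinstates $j$ using $Q_{m,k,i}=h_iQ_{m-i,k,0}$. A direct involution on pairs (weakly increasing padding, colored permutation) fixing $j$ and sending $a\mapsto l(n-j)-1-a$ would have to assign the padding an analogous special role, which your sketch leaves unspecified. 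Your last remark is on target, though: the symmetric-function property of $Q_{\check{\lambda},k}$ is indeed the ingredient that legitimizes the value-reversal step in these involutions, which is why the paper proves it combinatorially first.
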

 
 \begin{thm}\label{sym:iden2}
 Let $Q_{n,k}=\sum_{j}Q_{n,k,j}$. 
  For $a,b\geq1$ such that $a+b=ln$,
\begin{equation*}
\sum_{i=0}^{n-1}h_{i}Q_{n-i,a-1}=\sum_{i=0}^{n-1}h_iQ_{n-i,b-1}.
\end{equation*}
 \end{thm}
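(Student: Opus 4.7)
The plan is to prove Theorem~\ref{sym:iden2} by a generating function argument similar in spirit to the proof of Theorem~\ref{sym:iden1}. The two theorems are nevertheless distinct: summing Theorem~\ref{sym:iden1} over the parameter $j$ does not immediately yield Theorem~\ref{sym:iden2}, because for each fixed $j$ Theorem~\ref{sym:iden1} pairs degrees $a, b$ with $a + b + 1 = l(n-j)$ (center $\frac{l(n-j)-1}{2}$), whereas Theorem~\ref{sym:iden2} pairs degrees $a - 1, b - 1$ with $a + b = ln$ (center $\frac{ln - 2}{2}$), and these centers do not coincide for any individual $j$. So one must work directly with the generating function of $Q_{n, k}$.

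First, I would derive a closed form for
\[
F(z, t) := \sum_{n \geq 0} \sum_{k \geq 0} Q_{n, k} \, t^k \, z^n
\]
by summing Hyatt's generating function formula for $Q_{n, k, \Vec{\alpha}, \Vec{\beta}}$ over the color-parameter indices $\Vec{\alpha}, \Vec{\beta}$, equivalently specializing the corresponding auxiliary variables in the multi-variable formula to $1$. The result should be a rational expression in $H(z) = \sum_{i \geq 0} h_i z^i$ and $H(tz)$ that generalizes the Shareshian--Wachs formula $F(z, t) = \frac{(1-t) H(z)}{H(tz) - t H(z)}$ valid when $l = 1$.

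Second, writing $F'(z, t) := F(z, t) - 1$ to subtract off the $n = 0$ constant, a direct computation gives
\[
[z^n t^k]\bigl(H(z) F'(z, t)\bigr) = \sum_{i = 0}^{n-1} h_i Q_{n-i, k},
\]
so Theorem~\ref{sym:iden2} becomes the statement that for each $n \geq 1$, the polynomial $[z^n]\bigl(H(z) F'(z, t)\bigr)$ in $t$ has palindromic coefficients on the degree range $\{0, 1, \ldots, ln - 2\}$. This palindromicity should follow from a functional equation for the closed form of $H(z) F(z, t)$ under $t \mapsto 1/t$, with the hypothesis $a, b \geq 1$ precisely excising the boundary term at $k = 0$ caused by $Q_{0, 0} = 1$.

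The main obstacle is verifying this functional equation: extracting $F(z, t)$ from Hyatt's formula in a form where the symmetry $t \leftrightarrow 1/t$ is transparent, and then carefully tracking the resulting palindromicity on the correct degree range. An alternative combinatorial approach would construct an involution on pairs consisting of a colored permutation of $[n - i]$ with flag excedance $a - 1$ together with an object contributing $h_i$, that exchanges the flag excedance with its complement $ln - 2 - (a - 1)$. Such a bijection would parallel the combinatorial proof of Theorem~\ref{sym:iden1} mentioned in the abstract, enlarged to accommodate the summation over the parameter $j$.
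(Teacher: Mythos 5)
Your plan is workable and is, at bottom, the same generating-function strategy as the paper's analytic proof, but it differs in the final mechanism. The paper sets $r=1$ in \eqref{eq:flagexc}, subtracts $Q_{0,0}=1$, cross-multiplies and extracts coefficients to get $\sum_{i}h_iQ_{n-i,k-li}=\sum_i h_iQ_{n-i,k-1}$ for $1\le k\le ln-1$, and then finishes by invoking the symmetry $Q_{m,k}=Q_{m,lm-1-k}$ of \eqref{sym:flagwith}, which is established separately (by the banner involution $\theta$, or from \eqref{exa:wachs} via Theorem~\ref{hpositivity}). You instead propose to read the palindromicity of the convolution $\sum_{i=0}^{n-1}h_iQ_{n-i,k}$ directly off a $t\mapsto 1/t$ functional equation; this does work and bypasses \eqref{sym:flagwith}: with $F(z,t)=\sum_{n,k\geq0}Q_{n,k}t^kz^n=\frac{(1-t)H(z)}{H(t^lz)-tH(z)}$ and $G(z,t):=H(z)\bigl(F(z,t)-1\bigr)=H(z)\,\frac{H(z)-H(t^lz)}{H(t^lz)-tH(z)}$, a one-line computation (using $H(z)-t^{-1}H(t^lz)=-t^{-1}\bigl(H(t^lz)-tH(z)\bigr)$) gives
\begin{equation*}
G(z,t)\;=\;t^{-2}\,G(t^lz,t^{-1})\;+\;t^{-1}\bigl(H(t^lz)-H(z)\bigr),
\end{equation*}
so that $P_n(t):=\sum_k\bigl(\sum_{i=0}^{n-1}h_iQ_{n-i,k}\bigr)t^k$ satisfies $P_n(t)=t^{ln-2}P_n(t^{-1})+h_n\bigl(t^{ln-1}-t^{-1}\bigr)$, and comparing coefficients of $t^k$ for $0\le k\le ln-2$ yields exactly the theorem with $k=a-1$, $ln-2-k=b-1$. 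What your route buys is independence from the symmetry lemma \eqref{sym:flagwith}; what the paper's buys is brevity, since that lemma is proved and used elsewhere anyway. Two small corrections to your write-up: (i) to get $F(z,t)$ you must make the \emph{flag} specialization of Hyatt's formula, $t\mapsto t^l$, $s_m\mapsto t^m$, $r_j\mapsto1$ (equivalently, $r=1$ in \eqref{eq:flagexc}); literally summing over $\Vec{\alpha},\Vec{\beta}$ by setting all auxiliary variables to $1$ would grade by $\exc$ rather than by flag excedance, and for $l>1$ the denominator involves $H(t^lz)$, not $H(tz)$; (ii) the boundary defect is not at $k=0$ and has nothing to do with $Q_{0,0}=1$ (that term is already removed by passing to $F-1$): it sits at $t$-degrees $ln-1$ and $-1$, coming from the numerator $H(z)-H(t^lz)$, and the hypothesis $a,b\ge1$ excludes precisely the pair $\{a-1,b-1\}=\{ln-1,-1\}$ where the correction term lives.
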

 
We will construct bijective proofs of those two generalized symmetrical identities, one of which leads to a new interesting approach to the step~3 of~\cite[Theorem~1.2]{sw}. Define the {\em fixed point colored $q$-Eulerian numbers} by
\begin{equation}\label{colored:eulnum}
A_{n,k,j}^{(l)}(q):=\sum_{\pi} q^{(\maj-\exc)\pi}
\end{equation}
summed over all colored permutations $\pi\in C_l\wr\S_n$ with $k$ flag excedances and $j$ fixed points.
Applying the stable principle specialization to the two identities in Theorem~\ref{sym:iden1} and~\ref{sym:iden2} then yields two symmetrical identities for $A_{n,k,j}^{(l)}(q)$,  which are colored analog of two $q$-Eulerian symmetrical identities appeared  in~\cite{cg,hlz}. A new recurrence formula for the colored $q$-Eulerian numbers $A_{n,k,j}^{(l)}(q)$ is also proved.  Note that Steingr\'imsson~\cite{ste} has already generalized various joint pairs of statistics on permutations to colored permutations. More recently, Faliharimalala and Zeng~\cite{fz} introduced a Mahonian statistic $\fmaf$ on colored permutations and extend the triple statistic $(\fix,\exc,\maf)$, a triple that is equidistributed with $(\fix,\exc,\maj)$ studied in~\cite{fo5}, to the colored permutations.
In the same vein, we find  generalizations of {\em Gessel's hook factorizations}~\cite{ge} and the {\em admissible inversion} statistic introduced by Linusson, Shareshian and Wachs~\cite{lsw}, which enable us to obtain two new interpretations for $A_{n,k,j}^{(l)}(q)$.
 
 Let $Q_{n,k,\Vec{\beta}}$ be the  colored Eulerian quasisymmetric function (does not take the fixed points into account) defined by 
 $
 Q_{n,k,\Vec{\beta}}:=\sum_{\Vec{\alpha}}Q_{n,k,\Vec{\alpha},\Vec{\beta}}
 $.
 We obtain a new interpretation of $Q_{n,k,\Vec{\beta}}$ as sums of some fundamental quasisymmetric functions related with an analogue of Rawlings major index~\cite{ra} on colored permutations. This is established by applying the $P$-partition theory and a decomposition of the Chromatic quasisymmetric functions due to Shareshian and Wachs~\cite{sw3}. A consequence of this new interpretation is another interpretation for the {\em colored $q$-Eulerian numbers} $A_{n,k}^{(l)}(q)$ defined as
 \begin{equation}\label{coloredn:eulnum}
 A_{n,k}^{(l)}(q):=\sum_{j\geq0}A_{n,k,j}^{(l)}(q).
 \end{equation}
 %Note that $A_{n,k}^{(l)}(q)$ reduces to the $q$-Eulerian number $A_{n,k}(q)$ when $l=1$.
 
% Define the {\em colored $q$-Eulerian polynomials} $A_{n}(t,q)$ for the colored permutation group $C_l\wr\S_n$ by
% $$
% A_n(t,q):=\sum_{\pi\in C_l\wr\S_n}t^{\fexc(\pi)}q^{(\maj-\exc)\pi},
% $$ 
% where $\exc(\pi)$, $\fexc(\pi)$ and $\maj(\pi)$ denote the excedance number, the flag excedance number and
% the major index of $\pi\in C_l\wr\S_n$, respectively. 
% Apply the stable principal specialization to the flag Eulerian quasisymmetric functions

This paper is organized as follows. In section~\ref{CEQF}, we recall some statistics on colored permutations and the definition of the cv-cycle type colored Eulerian quasisymmetric function $Q_{\check{\lambda},j}$. We prove that $Q_{\check{\lambda},j}$ is a symmetric function using the interpretation of colored ornaments and state Hyatt's formula for the generating function of the fixed point colored Eulerian quasisymmetric functions. In section~\ref{dec-val-thm}, we show how to deduce Hyatt's generating function formula from the decrease value theorem. In section~\ref{flag:euler}, we introduce the flag Eulerian quasisymmetric function and prove Theorem~\ref{sym:iden1} and~\ref{sym:iden2}, both analytically and combinatorially. Some other properties of the flag Eulerian quasisymmetric functions and the fixed point colored $q$-Eulerian numbers $A_{n,k,j}^{(l)}(q)$ are also proved. 
%In particular, we confirm a conjecture of Mongelli~\cite{mon} about the unimodality of the sequence $\{A_{n,1,0}^{(2)}(1),A_{n,2,0}^{(2)}(1),\ldots,A_{n,2n-1,0}^{(2)}(1)\}$. 
In section~\ref{color:rawling}, we introduce a colored analog of Rawlings major index  and obtain a new interpretation for $Q_{n,k,\Vec{\beta}}$ and therefore an another interpretation for $A_{n,k}^{(l)}(q)$. 
%We close the paper with some related open questions.

\medskip 

\noindent {\bf Notations on quasisymmetric functions.} We collect here the definitions and some facts about Gessel's quasisymmetric functions that will be  used in the rest of this paper; a good reference is~\cite[Chapter~7]{st2}.
Given a subset $S$ of $[n-1]$, define the \emph{fundamental quasisymmetric function} $F_{n,S}$ by 
$$
F_{n,S}=F_{n,S}(\x):=\sum_{i_1\geq\cdots\geq i_n\geq1\atop j\in S\Rightarrow i_j>i_{j+1}}x_{i_1}\cdots x_{i_n}.
$$
If $S=\emptyset$ then $F_{n,S}$ is the complete homogeneous symmetric function $h_n$ and if $S=[n-1]$ then $F_{n,S}$ is the {\em elementary symmetric function} $e_n$.
Define $\omega$ to be  the involution on the ring of quasisymmetric functions that maps $F_{n,S}$ to $F_{n,[n-1]\setminus S}$, which extends the involution on the ring of symmetric functions that takes $h_n$ to $e_n$. 

The {\em stable principal specialization} $\ps$ is the ring homomorphism  from the ring of symmetric functions to the ring of formal power series in the variable $q$, defined by 
$$
\ps(x_i)=q^{i-1}.
$$
The following property of $\ps$ is known (see~\cite[Lemma~5.2]{gr})
\begin{equation}\label{quasi-ps}
\ps(F_{n,S})=\frac{q^{\sum_{i\in S}i}}{(q;q)_n}.
\end{equation}
In particular, $\ps(h_n)=1/(q;q)_n$.

\section{Colored Eulerian quasisymmetric functions}
\label{CEQF}
\subsection{Statistics on colored permutation groups}\label{st:color}
We shall recall the definition of the colored Eulerian quasisymmetric functions introduced in~\cite{hy}.
Consider the following  set of {\em$l$-colored integers} from $1$ to $n$
$$
[n]^l:=\left\{1^{0}, 1^{1}, \ldots, 1^{l-1},
2^{0}, 2^{1}, \ldots, 2^{l-1},\ldots,
n^0, n^1, \ldots, n^{l-1}\right\}. 
$$
If $\pi$ is a word over $[n]^l$, we use $\pi_i$ to denote the $i$th letter of $\pi$. We let $|\pi_i|$ denote the positive integer obtained by removing the superscript, and let $\epsilon_i\in\{0,1,\ldots,l-1\}$ denote the superscript, or color, of the $i$th letter of the word. If $\pi$ is a word of length $m$ over $[n]^l$, we 
denote by $|\pi|$ the word
$$|\pi|:=|\pi_1||\pi_2|\cdots|\pi_m|.$$
In one-line notation, the {\em colored permutation group} $C_l\wr\S_n$ can be viewed as the set of words over $[n]^l$ defined by 
$$
\pi\in C_l\wr\S_n\Leftrightarrow|\pi|\in\S_n.
$$

Now, the {\em descent number}, $\des(\pi)$, the {\em excedance number}, $\exc(\pi)$, and the {\em major index}, $\maj(\pi)$, of a colored permutation $\pi\in C_l\wr\S_n$ are defined as  follows:
\begin{align*}
&\Des(\pi):=\{j\in[n-1] : \pi_j>\pi_{j+1}\},\\
&\des(\pi):=|\Des(\pi)|,\quad \maj(\pi):=\sum_{j\in\Des(\pi)} j\\
&\Exc(\pi):=\{j\in[n] : \pi_j>j^0\},\quad \exc(\pi):=|\Exc(\pi)|,
\end{align*}
where we use the following {\em color order}
$$
\mathcal{E}:=\left\{1^{l-1}<2^{l-1}<\cdots<n^{l-1}
<1^{l-2}<2^{l-2}<\cdots<n^{l-2}<\cdots
<1^0<2^0<\cdots<n^0\right\}.
$$
Also, for $0\leq k\leq l-1$, the $k$-th color fixed point number $\fix_k(\pi)$ and the $k$-th color number $\col_k(\pi)$ are defined by
$$
\fix_k(\pi):=|\{j\in[n] : \pi_j=j^k\}|\quad\text{and}\quad\col_k(\pi):=|\{j\in[n] : \epsilon_j=k\}|.
$$
The {\em fixed point vector} $\Vec{\fix}(\pi)\in\N^{l}$ and the {\em color vector} $\Vec{\col}(\pi)\in\N^{l-1}$ are defined by
$$
\Vec{\fix}(\pi):=(\fix_0(\pi),\fix_1(\pi),\ldots,\fix_{l-1}(\pi)), \quad \Vec{\col}(\pi):=(\col_1(\pi),\ldots,\col_{l-1}(\pi))
$$
respectively.
For example, if $\pi=5^2\,2^1\,4^0\,3^2\,1^2\, 6^0\in C_3\wr\S_6$, then $\Des(\pi)=\{3,4\}$, $\des(\pi)=2$, $\exc(\pi)=1$, $\maj(\pi)=7$, $\Vec{\fix}(\pi)=(1,1,0)$ and $\Vec{\col}(\pi)=(1,3)$.

The colored permutations can also be written in cycle form such that $j^{\epsilon_j}$ follows $i^{\epsilon_i}$ means that $\pi_i=j^{\epsilon_j}$.  Continuing with the previous example, we can write it in cycle form as
\begin{equation}\label{cycleform}
\pi=(1^2,5^2)(2^1)(3^2,4^0)(6^0).
\end{equation}

Next we recall the cv-cycle type of a colored permutation $\pi\in C_l\wr\S_n$. Let $\lambda=(\lambda_1\geq\cdots\geq\lambda_i)$ be a partition of $n$. Let $\vec{\beta^1},\ldots,\vec{\beta^i}$ be a sequence of vectors in $\N^{l-1}$ with $|\vec{\beta^j}|\leq\lambda_j$ for $1\leq j\leq i$, where  $|\vec{\beta}|:=\beta_1+\cdots\beta_{l-1}$ for each $\vec{\beta}\in\N^{l-1}$. Consider the multiset of pairs 
\begin{equation}\label{cv-type}
\check{\lambda}=\{(\lambda_1,\vec{\beta^1}),\ldots,(\lambda_i,\vec{\beta^i})\}.
\end{equation}
A permutation $\pi$ is said to have {\em cv-cycle type} $\check{\lambda}(\pi)=\check{\lambda}$ if   each pair $(\lambda_j,\vec{\beta^j})$ corresponds to exactly one cycle of length $\lambda_j$ with color vector 
$\vec{\beta^j}$ in the cycle decomposition of $\pi$. Note that $\Vec{\col}(\pi)=\vec{\beta^1}+\vec{\beta^2}+\cdots+\vec{\beta^i}$ using component wise addition. For example, the permutation 
in~\eqref{cycleform} has $\check{\lambda}(\pi)=\{(2,(0,2)),(2,(0,1)),(1,(1,0)),(1,(0,0))\}$.

%Finally we recall the set value statistic $\DEX(\pi)$.
% Consider a new colored alphabet
%$$
%\mathcal{A}:=\{\widetilde{1^0}<\widetilde{2^0}<\cdots<\widetilde{n^0}\}<\mathcal{E},
%$$
%where $\mathcal{E}$ has the same order as before, but all letters with a tilde are less than all letters in $\mathcal{E}$.
%Given any colored permutation $\pi\in C_l\wr\S_n$, construct a word $\widetilde{\pi}$ of length $n$ over $\mathcal{A}$ as follows: if $i\in\Exc(\pi)$, then replace $\pi_i$ by $\widetilde{\pi_i}$, otherwise leave $\pi_i$ alone. The set $\DEX(\pi)$ is then defined by
%$$
%\DEX(\pi):=\Des(\widetilde{\pi}).
%$$
%For example, $\DEX(2^03^21^06^05^04^3)=\Des(\widetilde{2^0}3^21^0\widetilde{6^0}5^04^3)=\{3,5\}$.

We are now ready to give the definition of the main object of this paper.

\begin{definition}[Definition~2.1 of~\cite{hy}]
For any particular cv-cycle type $\check{\lambda}=\{(\lambda_1,\vec{\beta^1}),\ldots,(\lambda_i,\vec{\beta^i})\}$, define the {\em cv-cycle type colored Eulerian quasisymmetric functions} $Q_{\check{\lambda},k}$ by
$$
Q_{\check{\lambda},k}:=\sum_{\pi}F_{n,\DEX(\pi)}
$$
summed over $\pi\in C_l\wr\S_n$ with $\check{\lambda}(\pi)=\check{\lambda}$ and $\exc(\pi)=k$, where $\Dex(\pi)$ is some set value statistic related with $\Des$. We don't need the detailed definition of $\Dex$ in this paper.
Given $\Vec{\alpha}\in\N^l,\Vec{\beta}\in\N^{l-1}$, the {\em fixed point colored Eulerian quasisymmetric functions}  are then defined as
\begin{equation}\label{DEX:int}
Q_{n,k,\Vec{\alpha},\Vec{\beta}}=\sum_{\pi}F_{n,\DEX(\pi)}
\end{equation}
summed over all  $\pi\in C_l\wr\S_n$ such that $\exc(\pi)=k,\Vec{\fix}(\pi)=\Vec{\alpha}$ and $\Vec{\col}(\pi)=\Vec{\beta}$.
\end{definition}

The following specialization of the fixed point colored Eulerian quasisymmetric functions follows from~\cite[Lemma~2.2]{hy} and Eq.~\eqref{quasi-ps}.
\begin{lem}\label{DEX:lem}
For all $n,k,\Vec{\alpha}$ and $\Vec{\beta}$,
\begin{equation}\label{ps:dex}
\ps(Q_{n,k,\Vec{\alpha},\Vec{\beta}})=(q;q)_n^{-1}\sum_{\pi}q^{(\maj-\exc)\pi}
\end{equation}
summed over all  $\pi\in C_l\wr\S_n$ such that $\exc(\pi)=k,\Vec{\fix}(\pi)=\Vec{\alpha}$ and $\Vec{\col}(\pi)=\Vec{\beta}$.
\end{lem}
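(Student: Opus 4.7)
The plan is to apply $\ps$ directly to the defining expansion~\eqref{DEX:int} and reduce the claim to the identity $\sum_{i\in\DEX(\pi)}i=(\maj-\exc)(\pi)$ from \cite[Lemma~2.2]{hy}, which is exactly the input cited by the excerpt. Since $\ps$ is a ring homomorphism, linearity immediately gives
\begin{equation*}
\ps(Q_{n,k,\Vec{\alpha},\Vec{\beta}})=\sum_{\pi}\ps\bigl(F_{n,\DEX(\pi)}\bigr),
\end{equation*}
the sum running over $\pi\in C_l\wr\S_n$ with $\exc(\pi)=k$, $\Vec{\fix}(\pi)=\Vec{\alpha}$ and $\Vec{\col}(\pi)=\Vec{\beta}$.

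Next I would apply~\eqref{quasi-ps} to each summand with $S=\DEX(\pi)\subseteq[n-1]$, obtaining $\ps(F_{n,\DEX(\pi)})=q^{\sum_{i\in\DEX(\pi)}i}/(q;q)_n$, so that
\begin{equation*}
\ps(Q_{n,k,\Vec{\alpha},\Vec{\beta}})=\frac{1}{(q;q)_n}\sum_{\pi}q^{\sum_{i\in\DEX(\pi)}i},
\end{equation*}
where the sum range is unchanged. The conclusion~\eqref{ps:dex} then reduces to verifying that the exponent $\sum_{i\in\DEX(\pi)}i$ equals $\maj(\pi)-\exc(\pi)$ for every colored permutation $\pi\in C_l\wr\S_n$.

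The latter identity is precisely~\cite[Lemma~2.2]{hy}, and is the only nontrivial input; I would simply cite it, since the detailed definition of $\DEX$ is not recalled here. The main (and essentially the only) obstacle is that numerical identity, whose verification requires unwinding the construction of $\DEX(\pi)$ from $\Des(\pi)$ and $\Exc(\pi)$ and carefully tracking how each descent position and each excedance position contributes to the sum. Once this identity is granted, the lemma is a one-line consequence of the definition~\eqref{DEX:int}, the homomorphism property of $\ps$, and the standard specialization formula~\eqref{quasi-ps}.
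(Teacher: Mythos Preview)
Your proposal is correct and matches the paper's own argument exactly: the paper states that the lemma follows from \cite[Lemma~2.2]{hy} together with Eq.~\eqref{quasi-ps}, which is precisely the reduction you carry out. There is nothing to add.
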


\subsection{Colored ornaments} We will use the colored ornament interpretation in~\cite{hy}  to prove combinatorially that $Q_{\check{\lambda},k}$ is a symmetric function. 

Let $\mathcal{B}$ be the infinite ordered alphabet given by
\begin{equation}\label{original:order}
\mathcal{B}:=\{1^0<1^1<\cdots<1^{l-1}<\overline{1^0}<2^0<2^1<\cdots<2^{l-1}<\overline{2^0}<3^0<3^1<\cdots\}.
\end{equation}
A letter of the form $u^m$ is said to be $m$-colored and the letter $\overline{u^0}$ is called $0$-colored. 
If $w$ is a word over $\mathcal{B}$, we define the {\em color vector} $\Vec{\col}(w)\in\N^{l-1}$ of $w$ to be 
$$
\Vec{\col}(w):=(\col_1(w),\col_2(w),\ldots,\col_{l-1}(w)),
$$
where $\col_m(w)$ is the number of $m$-colored letters in $w$ for $m=1,\ldots,l-1$. The {\em  absolute value of a letter} is the positive integer obtained by removing any colors or bars, so $|u^m|=|\overline{u^0}|=u$. The {\em weight of a letter} $u^m$ or $\overline{u^0}$ is $x_u$.

We consider the circular word over $\mathcal{B}$. If $w$ is a word on $\mathcal{B}$, we denote $(w)$ the {\em circular word} obtained by placing the letters of $w$ around a circle in a clockwise direction. A circular word $(w)$ is said to be {\em primitive} if the word $w$ can not be written as $w=w'w'\cdots w'$ where $w'$ is some proper subword of $w$. For example, $(\overline{1^0},2^1,1^0,2^1)$ is primitive but $(1^0,2^1,1^0,2^1)$ is not because $1^02^11^02^1=w'w'$ with $w'=1^02^1$.

\begin{definition}[Definition~3.1 of~\cite{hy}]
A {\em colored necklace} is a circular primitive word  $(w)$ over the alphabet $\mathcal{B}$ such that 
\begin{itemize}
\item[(1)]  Every barred letter is followed by a letter of lesser or equal absolute value.
\item[(2)] Every $0$-colored unbarred letter is followed by a letter of greater or equal absolute value.
\item[(3)] Words of length one may not consist of a single barred letter.
\end{itemize}
A {\em colored ornament} is a multiset of colored necklaces.
\end{definition}

The {\em weight} $\wt(R)$ of a ornament $R$ is the product of the weights of the letters of $R$.
Similar to the {\em cv-cycle type}  of a colored permutation, the cv-cycle type $\check{\lambda}(R)$ of a colored ornament $R$ is the multiset 
$$
\check{\lambda}(R)=\{(\lambda_1,\vec{\beta^1}),\ldots,(\lambda_i,\vec{\beta^i})\},
$$
where each pair $(\lambda_j,\vec{\beta^j})$ corresponds to precisely one colored necklace in the ornament $R$ with length $\lambda_j$ and color vector $\vec{\beta^j}$ .

The following colored ornament interpretation of $Q_{\check{\lambda},k}$ was proved by Hyatt~\cite[Corollary~3.3]{hy} through a colored analog of the {\em Gessel-Reutenauer bijection}~\cite{gr}. 
\begin{thm}[Colored ornament interpretation]\label{thm:ornament} 
For all $\check{\lambda}$ and $k$,
$$
Q_{\check{\lambda},k}=\sum_{R}\wt(R)
$$
summed over all colored ornaments of cv-cycle type $\check{\lambda}$ and exactly $k$ barred letters.
\end{thm}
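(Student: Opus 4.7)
The plan is to establish the theorem by a weight-preserving bijection between the pairs indexing the monomial expansion of $\sum F_{n,\DEX(\pi)}$ and the colored ornaments of the target type; the argument is a colored analog of the Gessel--Reutenauer bijection used in the uncolored case. First I would expand each fundamental quasisymmetric function via
$$F_{n,\DEX(\pi)}=\sum_{f}\prod_{i=1}^n x_{f(i)},$$
where $f$ runs over all weakly decreasing maps $f:[n]\to\P$ that are strictly decreasing at each position in $\DEX(\pi)$. This rewrites $Q_{\check{\lambda},k}=\sum_{(\pi,f)}\prod_i x_{f(i)}$ where the sum is over all compatible pairs with $\check\lambda(\pi)=\check\lambda$ and $\exc(\pi)=k$.

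Next I would define a map $\Phi$ sending a compatible pair $(\pi,f)$ to a colored ornament, cycle by cycle. For each cycle $(i_1^{c_1},\ldots,i_r^{c_r})$ of $\pi$ (with $\pi(i_j)=i_{j+1}^{c_{j+1}}$, indices taken cyclically) I would produce the circular word whose $j$th letter is the barred symbol $\overline{f(i_j)^0}$ when the preceding position $i_{j-1}$ is an excedance of $\pi$, and $f(i_j)^{c_j}$ otherwise. Since an excedance $\pi(i)>i^0$ occurs precisely when $\pi(i)$ has color $0$ and strictly larger absolute value, the rule is well defined (it only bars $0$-colored letters). The multiset of circular words produced by the cycles of $\pi$ is the candidate ornament $\Phi(\pi,f)=R$.

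The verification splits into four checks: (i) each circular word is primitive, which is inherited from the distinctness of the position labels $i_j$ inside a cycle along Gessel--Reutenauer lines; (ii) the three colored necklace axioms hold, and this is the point where the compatibility condition ``$f$ strictly decreases on $\DEX(\pi)$ and weakly decreases elsewhere'' translates, via the color order $\mathcal{E}$, into the required inequalities between consecutive letters of each necklace; (iii) the number of barred letters of $R$ equals $\exc(\pi)$ and $\check\lambda(R)=\check\lambda(\pi)$, both by construction; (iv) $\wt(R)=\prod_i x_{f(i)}$, directly from the definition of the rule. To construct the inverse $\Phi^{-1}$, given an ornament $R$, I would read the letters of all its necklaces in the linear order of $\mathcal{B}$, using primitivity to choose a canonical rotation of each necklace; this recovers simultaneously the cycle structure (hence $\pi$) and the monomial encoding $f$. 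The main obstacle is step (ii): the definition of $\DEX$ intertwines the nonstandard color order $\mathcal{E}$ with the three kinds of letters in $\mathcal{B}$ (barred $0$-colored, unbarred $0$-colored, and strictly positively colored), so a careful case analysis of consecutive pairs inside each necklace is required to match the $\DEX$-compatibility of $f$ with the colored necklace conditions (1)--(3).
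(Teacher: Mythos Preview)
Your proposal follows exactly the route the paper attributes to this result: the paper does not give its own proof of the theorem but states it as Hyatt's~\cite[Corollary~3.3]{hy}, proved ``through a colored analog of the Gessel--Reutenauer bijection,'' which is precisely what you outline. Your sketch of the forward map (bar the letter at a position whose predecessor is an excedance, otherwise keep the color), the four verification steps, and the inverse via lexicographic ranking of infinite periodic words are all the standard ingredients of that argument, so there is nothing to compare.
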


\begin{thm} \label{symfun:cvcycle}
The cv-cycle type Eulerian quasisymmetric function
$Q_{\check{\lambda},k}$ is a symmetric function.
\end{thm}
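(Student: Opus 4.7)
The plan is to exploit the colored ornament interpretation of Theorem~\ref{thm:ornament}. Since $\wt(R)$ depends only on the multiset of absolute values of the letters of $R$, the expression $Q_{\check{\lambda},k}=\sum_R\wt(R)$ is already a polynomial in the $x_u$'s, and to prove it is symmetric it suffices, as the symmetric group is generated by adjacent transpositions, to construct for each $i\geq 1$ an involution $\phi_i$ on the set $\mathcal{R}(\check{\lambda},k)$ of colored ornaments of cv-cycle type $\check{\lambda}$ with exactly $k$ barred letters that exchanges the multiplicities of $x_i$ and $x_{i+1}$ and fixes the multiplicities of all other variables.

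First I would work necklace by necklace (an ornament is a multiset of necklaces and $\phi_i$ will act on each summand independently) and, adapting the Gessel--Reutenauer and Shareshian--Wachs matching from the uncolored case, partition the positions of each necklace whose letter has absolute value in $\{i,i+1\}$ into \emph{locked} pairs and \emph{free} singletons. Locked pairs consist of two adjacent such positions whose local absolute-value pattern cannot be interchanged without breaking the necklace axioms; a greedy clockwise scan (anchored by any marker singled out by the primitivity of the circular word) produces a canonical matching of the locked positions. The map $\phi_i$ acts only on the free positions, toggling $i^c\leftrightarrow(i+1)^c$ for every color $c\in\{0,1,\ldots,l-1\}$, together with $\overline{i^0}\leftrightarrow\overline{(i+1)^0}$. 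Because the color and the bar status of each letter are preserved pointwise, the length and the color vector of every necklace are preserved, and so is the global number of barred letters; thus $\phi_i$ maps $\mathcal{R}(\check{\lambda},k)$ into itself while swapping the exponents of $x_i$ and $x_{i+1}$ in $\wt(R)$.

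The main obstacle lies in specifying the correct locking rule. The delicate point, absent in the uncolored version, is that in the total order~\eqref{original:order} the barred letter $\overline{i^0}$ sits between $i^{l-1}$ and $(i+1)^0$; consequently a letter with nonzero color imposes no constraint on its successor and should be considered free (it enters the adjacency rules only through its predecessor), while consecutive pairs among the $0$-colored and the barred letters of absolute value in $\{i,i+1\}$ must be checked one by one against axioms~(1) and~(2) of a colored necklace to determine which patterns are locked. Once the locking rule is in place, it is a local verification at each boundary between locked and free positions that the output is again a valid colored necklace and that $\phi_i^2=\mathrm{id}$, whereupon Theorem~\ref{symfun:cvcycle} follows.
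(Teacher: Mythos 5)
Your overall strategy -- reduce to Theorem~\ref{thm:ornament} and build, for each $i$, an involution on colored necklaces that exchanges the number of letters of absolute value $i$ and $i+1$ while preserving length, the number of bars and the color vector -- is exactly the paper's strategy. But the proposal stops precisely where the work begins: the ``locking rule'' and the canonical matching, which you yourself identify as the main obstacle, are never defined, and the three verifications that everything hinges on (that toggling the free positions yields a valid primitive necklace, that the locked positions contain equally many $i$'s and $(i+1)$'s so that the multiplicities of $x_i$ and $x_{i+1}$ are actually exchanged, and that the free/locked decomposition is stable under the map so that $\phi_i^2=\mathrm{id}$) are only asserted. These are not routine. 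For instance, take $i=1$ and the necklace $(1^0\,1^0\,\overline{2^0})$: both circularly adjacent pairs $(1^0,\overline{2^0})$ and $(\overline{2^0},1^0)$ are ``locked'' by your local criterion, they overlap in the barred letter, and of the two possible greedy matchings one leaves the first $1^0$ free (toggling it gives $2^0\,1^0\,\overline{2^0}$, which violates axiom (2)), while only the other leaves the middle letter free and produces the correct partner $(1^0\,2^0\,\overline{2^0})$; the analogous ambiguity reappears when you try to invert the map. So a naive local locking rule is not well defined and wrong resolutions break the necklace axioms. Likewise your blanket claim that positively colored letters are free is false as stated: toggling the second letter of $\overline{1^0}\,1^1$ to $2^1$ violates axiom (1) at its predecessor. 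Finally, nothing in the proposal guarantees the balance condition needed for the weight exchange, nor rules out that a pointwise toggle destroys primitivity.

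There is also a structural reason to doubt that your scheme, which fixes bars and colors pointwise and only toggles values, can be completed by ``local verification'': the paper's involution (following Shareshian--Wachs) does not fix bars and colors pointwise. It cuts each necklace at the intruders into segments, treats even and odd segments differently -- in an odd segment the run-length pattern $k^{m_1}(k+1)^{n_1}\cdots k^{m_r}(k+1)^{n_r}$ is replaced by $k^{n_1}(k+1)^{m_1}\cdots k^{n_r}(k+1)^{m_r}$ -- and then repairs the necklace conditions by transferring bars and swapping colors from each rising $k$ to the corresponding falling $k+1$. That transfer of bars and colors is exactly what makes the number of bars and the color vector (hence the cv-cycle type and the excedance count) survive while the values are exchanged. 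If you want to insist on a bar- and color-pointwise-preserving toggle, you would need a genuinely new argument (the existence of such a bijection is not obvious and is not established by your outline); otherwise you should follow the segment/run-rewriting route, in which case the content of the proof is the case analysis you have omitted. As it stands, the proposal is a plan with the essential combinatorial construction missing, so it does not yet prove Theorem~\ref{symfun:cvcycle}.
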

\begin{proof} We will generalize  the bijective poof of~\cite[Theorem~5.8]{sw}  involving ornaments to the colored case.
For each $k\in\P$, we will construct a bijection $\psi$ between colored necklaces that exchanges the number of occurrences of the value $k$ and $k+1$ in a colored necklace, but preserves the number of occurrences of all other values, the total number of bars and the color vector. The results will then follow from Theorem~\ref{thm:ornament}.

\vskip 0.1in
{\bf Case~1:} The necklace $R$ contains only the letters with values $k$ and $k+1$. Without loss of generality, we assume that $k=1$. First replace all $1$'s with $2$'s and all $2$'s with $1$'s, leaving the bars and colors in their original positions. Now the problem is that each $0$-colored $1$ that is followed by a $2$ has a bar and each $0$-colored $2$ that is followed by by a $1$ lacks a bar. We call a $1$ that is followed by a $2$ a rising $1$ and a $2$ that is followed by a $1$ a falling $2$. Since the number of rising $1$ equals the number of falling $2$ and they appear alternately, we can switch the color of each rising $1$ with the color of its followed falling $2$ and if in addition, the rising $1$ is $0$-colored with a bar then we also move the bar to its followed falling $2$, thereby obtaining a colored necklace $R'$ with the same number of bars and the same color vector as $R$ but with the number of $1$'s and $2$'s exchanged. Let $\psi(R)=R'$. Clearly, $\psi$ is reversible. For example if $R=(2^2\,\overline{2^0}\, 1^1\,\overline{1^0}\,1^0\,\overline{2^0}\,2^3\,\overline{2^0}\,2^1\,1^0\,1^0\,\overline{2^0}\,1^2\,\overline{1^0}\,1^0)$ then we  get  
$(1^2\,\overline{1^0}\, 2^1\,\overline{2^0}\,2^0\,\overline{1^0}\,1^3\,\overline{1^0}\,1^1\,2^0\,2^0\,\overline{1^0}\,2^2\,\overline{2^0}\,2^0)$ 
before the colors and bars are adjusted. After the colors and bars are adjusted we have 
$\psi(R)=(1^2\,1^0\, 2^1\,\overline{2^0}\,\overline{2^0}\,\overline{1^0}\,1^3\,\overline{1^0}\,1^0\,2^0\,2^1\,1^0\,2^2\,\overline{2^0}\,\overline{2^0})$.

\vskip 0.1in
{\bf Case 2:} The necklace $R$ has letters with values $k$ and $k+1$, and other letters which we will call intruders. The intruders enable us to form linear segments of $R$ consisting only of  $k$ 's and $(k+1)$'s. To obtain such a linear segment start with a letter of value $k$ or $k+1$ that follows an intruder and read the letters of $R$ in a clockwise direction until another intruder is encountered. For example if 
\begin{equation}\label{exmR}
R=(\overline{5^0}\,3^1\,3^0\,4^2\,\overline{4^0}\,\overline{3^0}\,3^1\,\overline{3^0}\,3^2\,6^2\,\overline{6^0}\,\overline{3^0}\,3^0\,3^1\,\overline{4^0}\,2^0\,4^3\,4^0)
\end{equation}
and $k=3$ then the segments are $3^1\,3^0\,4^2\,\overline{4^0}\,\overline{3^0}\,3^1\,\overline{3^0}\,3^2$, $\overline{3^0}\,3^0\,3^1\,\overline{4^0}$ and $4^3\,4^0$.

There are two types of segments, even segments and odd segments. An even (odd) segment contains an even (odd) number of switches, where a switch is a letter of value $k$ followed by one of value $k+1$ (call a rising $k$) or a letter of value $k+1$ followed by one of value $k$ (call a falling $k+1$). We handle the even and odd segments differently.
\vskip 0.1in
\noindent {\bf Subcase 1:} Even segments. In an even segment, we replace all $k$'s
 with $(k+1)$'s and all $(k+1)$'s with $k$'s. Again, this may product problems on rising $k$ or falling $k+1$. So we switch the color of  $i$-th rising $k$ with the color of $i$-th falling $k+1$ and move the bar (if it really has) from $i$-th rising $k$ to $i$-th falling $k+1$ to obtain a good segment, where we count rising $k$'s and falling $(k+1)$'s from left to right. This preserves the number of bars and color vector and exchanges the number of $k$'s and $(k+1)$'s. For example, the even segment 
 $3^1\,3^0\,4^2\,\overline{4^0}\,\overline{3^0}\,3^1\,\overline{3^0}\,3^2$ 
 gets replaced by 
 $4^1\,4^0\,3^2\,\overline{3^0}\,\overline{4^0}\,4^1\,\overline{4^0}\,4^2$. 
 After the bars and colors are adjusted we obtain
 $4^1\,\overline{4^0}\,3^2\,3^0\,\overline{4^0}\,4^1\,\overline{4^0}\,4^2$.
\vskip 0.1in
\noindent {\bf Subcase 2:} Odd segments. An odd segment either either starts with a $k$ and ends with a $k+1$ or vice versa. Both cases are handled similarly. So we suppose we have an odd segment of the form
$$
k^{m_1}(k+1)^{n_1}k^{m_2}(k+1)^{n_2}\cdots k^{m_r}(k+1)^{n_r},
$$
where each $m_i,n_i>0$ and the bars and colors have been suppressed. The number of switches is $2r-1$. We replace it with the odd segment
$$
k^{n_1}(k+1)^{m_1}k^{n_2}(k+1)^{m_2}\cdots k^{n_r}(k+1)^{m_r},
$$
and put bars and colors in their original positions. Again we may have created problems on rising $k$'s (but not on falling $(k+1)$'s); so we need to adjust bars and colors around. Note that the positions of the rising $k$'s are in the set $\{N_1+n_1,N_2+n_2,N_3+n_3,\ldots,N_{r}+n_r\}$, where $N_i=\sum_{t=1}^{i-1}(n_t+m_t)$. Now we switch the color  in position $N_i+n_i$ with the color in position $N_i+m_i$ and move the bar (if it really has) to position $N_i+m_i$, thereby obtain a good segment. For example, the odd segment $\overline{3^0}\,3^0\,3^1\,\overline{4^0}$ gets replaced by $\overline{3^0}\,4^0\,4^1\,\overline{4^0}$ before the bars and colors are adjusted. After the bars and colors are adjusted we have $3^1\,4^0\,\overline{4^0}\,\overline{4^0}$.

\vskip 0.1in

Let $\psi(R)$ be the colored necklace obtained by replacing all the segments in the way described above.
For example if $R$ is the colored necklace given in~\eqref{exmR} then 
$$
\psi(R)=(\overline{5^0}\,4^1\,\overline{4^0}\,3^2\,3^0\,\overline{4^0}\,4^1\,\overline{4^0}\,4^2\,6^2\,\overline{6^0}\,3^1\,4^0\,\overline{4^0}\,\overline{4^0}\,2^0\,3^3\,3^0).
$$

\vskip 0.1in
It is easy to see that $\psi$ is reversible in all cases and thus is a bijection of colored necklaces. This  completes the proof of the theorem. 
\end{proof}

\subsection{Colored banners}
\label{col:banners}
We shall give a brief review of the colored banner interpretation of $Q_{\check{\lambda},k}$ introduced by Hyatt~\cite{hy} and stated his generating function formula for $Q_{n,k,\Vec{\alpha},\Vec{\beta}}$.  We also give a  slightly different colored banner interpretation of $Q_{\check{\lambda},k}$ that will be used next.

\begin{definition}[Definition~4.2 of~\cite{hy}]
 A {\em colored banner} is  a word $B$ over the alphabet $\mathcal{B}$ such that 
\begin{itemize}
\item[(1)] if $B(i)$ is barred then $|B(i)|\geq|B(i+1)|$,
\item[(2)] if $B(i)$ is 0-colored and unbarred, then $|B(i)|\leq|B(i+1)|$ or $i$ equals the length of $B$,
\item[(3)] the last letter of $B$ is unbarred.
\end{itemize}
\end{definition}

Recall that a {\em Lyndon word} over an ordered alphabet is a word that is strictly lexicographically larger than all its circular rearrangements. It is a result of Lyndon  (cf. \cite[Theorem 5.1.5]{lo}) that every word has a unique  factorization into a lexicographically weakly increasing sequence of Lyndon words, called \emph{Lyndon factorization}. We say that a word of length $n$ has Lyndon type $\lambda$ (where $\lambda$ is a partition of $n$) if parts of $\lambda$ equal the lengths of the words in the Lyndon factorization.

We apply Lyndon factorization to colored banners.  The {\em cv-cycle type} of a colored banner B  is the multiset 
\begin{equation*}
\check{\lambda}(B)=\left\{(\lambda_1,\vec{\beta^1}),...,(\lambda_k,\vec{\beta^k})\right\}
\end{equation*}
if $B$ has Lyndon type $\lambda$, and the corresponding word of length $\lambda_i$ in the Lyndon factorization has color vector $\vec{\beta^i}$. The {\em weight} wt$(B)$ of a banner is defined to be the product of the weights of all letters in $B$.

\begin{thm}[New colored banner interpretation]\label{new:banner}
For all $\check{\lambda}$ and $k$,
$$
Q_{\check{\lambda},k}=\sum_{B}
\wt(B)
$$ 
summed all banners $B$ of length $n$ and cv-cycle type $\check{\lambda}$ (with respect to the order in~\eqref{original:order}) with exactly $k$ barred letters.
\end{thm}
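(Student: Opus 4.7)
I will set up a weight-preserving bijection $\phi$ between colored banners of length $n$ with cv-cycle type $\check{\lambda}$ and exactly $k$ barred letters, and the colored ornaments enumerated in Theorem~\ref{thm:ornament}; the identity then follows at once. The map is the standard one coming from Lyndon factorization with respect to the order~\eqref{original:order}: given a banner $B$, write $B = w_1 w_2 \cdots w_m$ as a weakly increasing concatenation of Lyndon words (its Lyndon factorization), and send $B$ to the multiset of circular words $\phi(B) := \{(w_1), \ldots, (w_m)\}$. The weight, the number of bars, and (by the very definition of a banner's cv-cycle type) the cv-cycle type are preserved manifestly. The inverse linearizes each necklace at its unique Lyndon rotation, sorts the resulting Lyndon words weakly increasingly, and concatenates.

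The verification rests on two structural features of~\eqref{original:order}: the letters are grouped by absolute value into blocks $u^0 < u^1 < \cdots < u^{l-1} < \overline{u^0}$, with \emph{all} absolute-value-$u$ letters preceding \emph{all} absolute-value-$(u+1)$ letters; in particular $\overline{u^0}$ is the largest letter of absolute value $u$, and because $a \leq b$ implies $|a| \leq |b|$, the first letter of any Lyndon word $w$ satisfies $|\mathrm{first}(w)| = \max\{|x| : x \in w\}$.

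For the forward direction, I must check that each $(w_i)$ is a colored necklace. Primitivity is immediate from the Lyndon property, and the non-wrap-around necklace conditions are inherited from the banner conditions inside $w_i$. For the cyclic wrap-around from $\mathrm{last}(w_i)$ back to $\mathrm{first}(w_i)$, necklace condition~(2) is automatic because $|\mathrm{first}(w_i)| \geq |\mathrm{last}(w_i)|$. For necklace condition~(1), suppose $\mathrm{last}(w_i)$ is barred and $|\mathrm{first}(w_i)| > |\mathrm{last}(w_i)|$: if $i < m$, banner condition~(1) at position $|w_1 \cdots w_i|$ gives $|\mathrm{last}(w_i)| \geq |\mathrm{first}(w_{i+1})|$, but $w_i \leq w_{i+1}$ combined with the observation above forces $|\mathrm{first}(w_{i+1})| \geq |\mathrm{first}(w_i)|$, a contradiction; if $i = m$, then the barred $\mathrm{last}(w_i)$ is the last letter of $B$, violating banner condition~(3). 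An analogous argument rules out length-one Lyndon factors consisting of a single barred letter, giving necklace condition~(3).

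For the reverse direction, the crucial claim is that the Lyndon rotation of any colored necklace never ends in a barred letter, which makes banner condition~(3) automatic. Indeed, if $x_1 \cdots x_n$ is such a rotation with $x_n = \overline{u^0}$, then necklace condition~(1) at $x_n$ gives $|x_1| \leq u$ while the structural observation gives $|x_1| \geq u$, forcing $x_1 = \overline{u^0}$; iterating one position at a time forces $w_i = (\overline{u^0})^n$, which is either non-primitive ($n \geq 2$) or a single barred letter ($n = 1$), both forbidden. Banner conditions~(1) and~(2) at the junctions between consecutive Lyndon factors then follow from the chain $|\mathrm{first}(w_{i+1})| \geq |\mathrm{first}(w_i)| \geq |\mathrm{last}(w_i)|$, while inside each $w_i$ they are inherited from the necklace. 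I expect the main obstacle to be the detailed bookkeeping in these case analyses, particularly in tracking the equality case $|\mathrm{first}(w_i)| = |\mathrm{last}(w_i)|$ when $\mathrm{last}(w_i)$ is barred, which is what keeps the map landing squarely inside the class of colored necklaces. Combining the bijection with Theorem~\ref{thm:ornament} then completes the proof.
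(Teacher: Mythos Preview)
Your proposal is correct and follows precisely the approach the paper intends: the paper's own proof is the one-line remark that ``the proof applies Lyndon factorization with respect to the order of $\mathcal{B}$ in~\eqref{original:order} to the banners and is identical to the proof of~\cite[Theorem~3.6]{sw},'' and what you have written is exactly a spelled-out version of that argument---the bijection $\phi$ sending a banner to the multiset of necklaces given by its Lyndon factors, with the inverse given by linearizing at the Lyndon rotation and concatenating in weakly increasing order. One small point worth tightening: your ``analogous argument'' for necklace condition~(3) does not quite parallel the condition~(1) argument, because for a length-one factor $w_i=\overline{u^0}$ the inequality $|\mathrm{first}(w_i)|>|\mathrm{last}(w_i)|$ fails; instead one observes (using banner~(1), the weakly-increasing property, and uniqueness of the Lyndon factorization) that such a factor forces $w_{i+1}=w_{i+2}=\cdots=w_m=\overline{u^0}$, whence the last letter of $B$ is barred, contradicting banner~(3).
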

\begin{proof}
The proof applies Lyndon factorization with respect to the order of $\mathcal{B}$ 
in~\eqref{original:order} to the banners  and is identical to the proof of~\cite[Theorem~3.6]{sw}.
\end{proof}

\begin{remark} \label{hyatt:ban}
Consider another order $<_B$ on the alphabet $\mathcal{B}$ as follows
\[1^1<_B \cdots <_B 1^{l-1}<_B 2^1<_B \cdots <_B 2^{l-1}<_B\cdots<_B n^1<_B \cdots <_B n^{l-1}<_B\]
\[<_B1^0<_B\overline{1^0}<_B 2^0<_B\overline{2^0}<_B 3^0<_B\overline{3^0}<_B \cdots n^0<_B\overline{n^0}.\]
Hyatt~\cite[Theorem~4.3]{hy} applied the Lyndon factorization to the colored banners with the above order $<_B$ on $\mathcal{B}$ to give a different colored banner interpretation of $Q_{\check{\lambda},k}$, which we should call the {\em original colored banner interpretation}. Our  new colored banner interpretation stated here is closer to the word interpretation in Lemma~\ref{word:version}, while the original colored banner interpretation will be used in the proof of Theorem~\ref{decomp:refine}.
%As the proof of~\cite[Theorem~4.3]{hy} is also valid if we use the ordering of $\mathcal{B}$, both definitions of $Q_{\check{\lambda},k}$ are coincide and so does $Q_{n,k,\Vec{\alpha},\Vec{\beta}}$. 
%%Originally, $Q_{\check{\lambda},k}$ was defined as the sum of some fundamental quasisymmetric functions related with some set value statistic $\Dex$ on colored permutations with $k$ excedance, which would justify the name colored Eulerian quasisymmetric functions. 
%The definition chosen here is closer to our purposes. 
\end{remark}

The following generating function for $Q_{n,k,\Vec{\alpha},\Vec{\beta}}$ was computed in~\cite{hy} by establishing a recurrence formula based on the original colored banner interpretation of $Q_{\check{\lambda},k}$. 

\begin{thm}[Hyatt] \label{hyatt}
Fix $l\in\P$ and let $r^{\Vec{\alpha}}=r_0^{\alpha_0}\cdots r_{l-1}^{\alpha_{l-1}}$
and $s^{\Vec{\beta}}=s_1^{\beta_1}\cdots s_{l-1}^{\beta_{l-1}}$. Then 
\begin{equation}\label{quasi-B}
\sum_{n,k\geq0\atop{\Vec{\alpha}\in\N^l,\Vec{\beta}\in\N^{l-1}}} Q_{n,k,\Vec{\alpha},\Vec{\beta}}z^nt^kr^{\Vec{\alpha}}s^{\Vec{\beta}}=\frac{H(r_0z)(1-t)(\prod\limits_{m=1}^{l-1}E(-s_mz)H(r_ms_mz))}{(1+\sum\limits_{m=1}^{l-1}s_m)H(tz)-(t+\sum\limits_{m=1}^{l-1}s_m)H(z)},
\end{equation}
where $H(z):=\sum_{i\geq0}h_iz^i$ and $E(z):=\sum_{i\geq0}e_iz^i$.
\end{thm}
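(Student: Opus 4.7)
My plan is to carry out the strategy announced in the introduction: translate the statement via the colored banner interpretation of Theorem~\ref{new:banner}, and then invoke the Foata--Han decrease value theorem directly, bypassing the intricate recurrence argument of~\cite{hy}.

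First, summing the identity in Theorem~\ref{new:banner} over all cv-cycle types $\check{\lambda}$ with prescribed fixed point vector $\Vec{\alpha}$ and color vector $\Vec{\beta}$ rewrites the left-hand side of~\eqref{quasi-B} as
\[
\sum_B \wt(B)\, z^{|B|} t^{\bbar(B)} s^{\Vec{\col}(B)} r^{\Vec{\alpha}(B)},
\]
the sum ranging over all colored banners $B$ on $\mathcal{B}$, where $\bbar(B)$ is the number of barred letters and $\Vec{\alpha}(B)$ records the colors of the singleton Lyndon factors of $B$ (which correspond, via Lyndon factorization, to the fixed points of the associated colored permutation). The key observation is that the three defining conditions of a banner classify each letter of $\mathcal{B}$ according to its interaction with the next letter: a barred $\overline{u^0}$ must start a weak decrease in absolute value, an unbarred $u^0$ must start a weak increase, and an $m$-colored $u^m$ with $m\geq 1$ is unrestricted. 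This is precisely the local decrease/increase framework treated by the Foata--Han decrease value theorem on words.

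Applying that theorem, with letter weights $x_u t$ for $\overline{u^0}$, $x_u$ for $u^0$, and $x_u s_m$ for $u^m$ when $m\geq 1$, would yield a rational generating function for banners whose denominator matches $(1+\sum_m s_m)H(tz) - (t+\sum_m s_m)H(z)$ and produces the overall factor $1-t$ from the ``decrease value'' weighting. To recover the numerator of~\eqref{quasi-B}, I would then isolate the contribution of singleton Lyndon factors (which index fixed points) from the remaining Lyndon factors via a factor-type identity for Lyndon decompositions: a singleton $0$-colored factor contributes to $H(r_0 z)$, while a singleton $m$-colored factor for $m\geq 1$ contributes to $H(r_m s_m z)$, and the factors $E(-s_m z)$ in the numerator arise as an inclusion--exclusion correction for the $m$-colored singletons already present in the ``free letter'' contribution to the word generating function.

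The main obstacle I foresee is aligning the statistics exactly. The decrease value theorem produces a word generating function indexed only by local letter types, whereas our banner generating function carries the additional datum of the fixed point vector, accessible only through the Lyndon factorization. Carefully identifying which letter classes act as ``free'' versus ``bar-constrained'' in the Foata--Han setup, and verifying that the resulting inclusion--exclusion cleanly yields the factors $E(-s_m z)H(r_m s_m z)$, is where the delicate bookkeeping lies. A secondary subtlety is checking that the order on $\mathcal{B}$ used in Theorem~\ref{new:banner} is compatible with the order used by the decrease value theorem, so that the Lyndon factors in the banner correspond to cycles of the original colored permutation with the correct color vectors.
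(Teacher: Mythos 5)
You follow the same overall route that the paper itself takes in Section~\ref{dec-val-thm}: pass to colored banners via Theorem~\ref{new:banner}, identify banners with words over $\N$ in which barred letters become $0$-colored decrease values (this is exactly the paper's map $f$ and Lemma~\ref{word:version}, which also disposes of your worry about compatibility of the order~\eqref{original:order} with the Lyndon factorization), and then invoke the Foata--Han decrease value theorem. Where you genuinely diverge is the treatment of the fixed-point vector. The paper threads $r_0,\dots,r_{l-1}$ through the decrease value theorem itself, weighting the increase values that are records by $\zeta_j\leftarrow r_0Y_i$ or $r_ms_mY_i$, so that a single specialization (via Lemma~\ref{le2}) produces all of~\eqref{quasi-B} at once; this implicitly rests on identifying fixed points with record increase values of the encoding word. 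You instead extract only the $r$-free series $F(z,t,s)$ from the decrease value theorem and restore the refinement afterwards by splitting off the singleton Lyndon factors (equivalently, the singleton necklaces of the corresponding ornament). That step is sound: multiplicativity of the singleton decomposition gives
\[
\sum_{n,k,\vec{\alpha},\vec{\beta}} Q_{n,k,\vec{\alpha},\vec{\beta}}\,z^nt^kr^{\vec{\alpha}}s^{\vec{\beta}}
=F(z,t,s)\,E(-z)H(r_0z)\prod_{m=1}^{l-1}E(-s_mz)H(r_ms_mz),
\]
and since the decrease value theorem yields $F(z,t,s)=(1-t)H(z)\big/\big((1+\sum_{m}s_m)H(tz)-(t+\sum_{m}s_m)H(z)\big)$, this is exactly~\eqref{quasi-B}. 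Note that you also need the $0$-colored correction $E(-z)$, which your sketch omits; it is what cancels the $H(z)$ in the numerator of $F$ to leave $H(r_0z)$. Your variant buys independence from the record statistics (one may simply set $Y_j'=Y_j$ and $T_j'=T_j$ in~\eqref{decrease}), at the cost of an extra, though easy, factoring argument.

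Two things must still be supplied to turn the plan into a proof. First, the central computation is only asserted ("would yield"): to obtain $F$ you must group the alphabet into residue classes modulo $l$, substitute your letter weights into~\eqref{decrease}, and pass to the limit, which requires precisely the manipulation of Lemma~\ref{le2} and the computation leading to~\eqref{maineq}; this is the technical core of the argument and cannot be skipped. Second, justify the singleton factoring where it is cleanest, namely at the level of colored ornaments (Theorem~\ref{thm:ornament}), where necklaces of length one are unbarred by definition and carry no excedance; if you insist on working with banners you must additionally check that a singleton Lyndon factor is never a barred letter, so that singleton factors of each color are in weight-preserving bijection with fixed points of that color.
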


%%%%%%%%%%%%%%%%%%%%%%%%%%%%
\section{The decrease value theorem with an application}
%%%%%%%%%%%%%%%%%%%%%%%%%%%%

\label{dec-val-thm}

The main objective of this section is to show how~\eqref{quasi-B} can be deduced from the decrease value theorem directly.

\subsection{Decrease values in words} We now introduce some word statistics studied in~\cite{fh2,fh4}. Let $w=w_1w_2\cdots w_n$ be an arbitrary word over $\N$. Recall that an integer $i\in[n-1]$ is said to be a {\em descent} of $w$ if $w_i>w_{i+1}$; it is a {\em decrease} of $w$ if $w_i=w_{i+1}=\cdots=w_j>w_{j+1}$ for some $j$ such that $i\leq j\leq n-1$. The letter $w_i$ is said to be a \emph{decrease value} of $w$. The set of all decreases (resp.~descents) of $w$ is denoted by $\Dec(w)$ (resp.~$\Des(w)$). Each descent is a decrease, but not conversely. Hence $\Des(w)\subset\Dec(w)$. 

In parallel with the notions of descent and decrease, an integer $i\in[n]$ is said to be a {\em rise} of $w$ if $w_i<w_{i+1}$ (By convention that $w_{n+1}=\infty$, and thus $n$ is always a rise); it is a {\em increase} of $w$ if $i\notin\Dec(w)$. The letter $w_i$ is said to be a \emph{increase value} of $w$.  The set of all increases (resp.~rises) of $w$ is denoted by $\Inc(w)$ (resp.~$\Rise(w)$). Clearly, each rise is a increase, but not conversely. Hence $\Rise(w)\subset\Inc(w)$.

Furthermore, a position $i$ is said to be a {\em record} if $w_i\geq w_j$ for all $j$ such that $1\leq j\leq i-1$ and the letter $w_i$ is called a {\em record value}. Denote by $\Rec(w)$ the set of all records of $w$.

Now, we define a mapping $f$ from words on $\N$ to colored banners as follows
$$
f: w=w_1w_2\ldots w_n\mapsto B=B(1)B(2)\ldots B(n),
$$
where
\begin{itemize}
\item $B(i)=\overline{u^0}$, if $w_i$ is a decrease value such that $w_i=ul$ for some $u\in\P$;
\item otherwise $B(i)=(u+1)^m$, where $w_i=ul+m$ for some $u,m\in\N$ satisfies $0\leq m\leq l-1$ and either $w_i$ is an increase or $m\neq0$.
\end{itemize}
For example, if $l=3$, then $f(12\,10\,9\,12\,8\,12\,16\,2\,13\,19)=\overline{4^0}\,4^1\,4^0\,\overline{4^0}\,3^2\,5^0\,6^1\,1^2\,5^1\,7^1$.

We should check that  such a word   $B$ over $\mathcal{B}$ is a colored banner. In the definition of a colored banner, condition $(3)$ is satisfied since the last letter of a word is always a increase value. 
If $B(i)$ is barred, then $w_i$ is a decrease value and so $w_i\geq w_{i+1}$, which would lead $|B(i)|\geq |B(i+1)|$, and thus condition $(1)$ is satisfied. Similarly, condition $(2)$ is also satisfied. This shows that $f$ is  well defined. 

A letter  $k\in\N$ is called a $m$-colored letter (or value) if  it  is congruent to $m$ modulo $l$. For a word $w=w_1\ldots w_n$ over $\N$, we define the {\em colored vector} $\vec{\col}(w)\in\N^{l-1}$ of $w$ to be 
$$
\vec{\col}(w):=(\col_1(w),\ldots,\col_{l-1}(w)),
$$
where $\col_m(w)$ is the number of $m$-colored letters in $w$  for $m=1,\ldots, l-1$.
Supposing that $w_i=u_il+m_i$ for some $0\leq m_i\leq l-1$, we then define the weight wt$(w)$ of $w$ to be the monomial $x_{d(w_1)}\ldots x_{d(w_n)}$, where 
  $d(w_i)=u_i$ if $w_i$ is a decrease value and $m_i=0$, otherwise $d(w_i)=u_i+1$. 
We also define the {\em cv-cycle type} of $w$ to be the multiset 
\begin{equation*}
\check{\lambda}(w)=\left\{(\lambda_1,\vec{\alpha^1}),...,(\lambda_k,\vec{\alpha^k})\right\}
\end{equation*}
if $w$ has Lyndon type $\lambda$ (with respect to the order of $\P$), and the corresponding word of length $\lambda_i$ in the Lyndon factorization has color vector $\vec{\alpha^i}$.

\begin{lem}\label{word:version}
 Let $W(\check{\lambda},k)$ be the set of all words over $\N$ with length $n$ and cv-cycle type $\check{\lambda}$ with exactly $k$ $0$-colored decrease values. Then 
 $$
 Q_{\check{\lambda},k}=\sum_{w\in W(\check{\lambda},k)}
\wt(w).
 $$
 \end{lem}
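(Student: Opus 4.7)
By Theorem~\ref{new:banner}, $Q_{\check{\lambda},k}$ equals the weighted sum over colored banners of cv-cycle type $\check{\lambda}$ with exactly $k$ barred letters. It therefore suffices to show that the map $f$ defined just before the lemma restricts to a weight-preserving bijection from $W(\check{\lambda},k)$ to this set of banners.

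\paragraph{Bijection and local statistics.}
I would first verify that $f$ is a length-preserving bijection between arbitrary words over $\N$ and colored banners, with explicit inverse $g$ given letter-by-letter by $\overline{u^0}\mapsto ul$ and $u^m\mapsto(u-1)l+m$. The substantive content is that, for any banner $B$, the word $g(B)$ has a decrease value at position $i$ if and only if $B(i)$ is barred: the forward direction follows by tracing a maximal run of barred letters $\overline{u^0}$ in $B$, which condition~(1) together with condition~(3) forces to terminate at an unbarred letter of strictly smaller absolute value, producing a strict descent in $g(B)$; the reverse direction uses condition~(2) to rule out decreases at $0$-colored unbarred positions. Once $f$ is a bijection, weight preservation $\wt(w)=\wt(f(w))$ follows tautologically from $|f(w)_i|=d(w_i)$, the color of each letter is preserved since the superscript in $\mathcal{B}$ coincides with the residue modulo $l$ of the $\N$-value, and the number of $0$-colored decrease values of $w$ equals the number of barred letters of $f(w)$ by construction.

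\paragraph{Preservation of cv-cycle type.}
This is the main technical step. I plan to show that the Lyndon factorization of $w$ over $\N$ and the Lyndon factorization of $f(w)$ over $\mathcal{B}$ (in the order \eqref{original:order}) have factors of matching lengths and color vectors. The key observation is that $f$ is order-preserving on letters in the following precise sense: if $w_i<w_j$ in $\N$, then $f(w)_i\le f(w)_j$ in $\mathcal{B}$, while two positions with identical $\N$-value $ul$ are separated by $f$ only when one is a $0$-colored decrease (mapped to $\overline{u^0}$) and the other a $0$-colored increase (mapped to $(u+1)^0$), with the decrease placed strictly below the increase in $\mathcal{B}$. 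Since Lyndon-factor boundaries depend only on strict lexicographic comparisons of suffixes and this tie-breaking is always consistent, the factorization boundaries of $w$ and $f(w)$ coincide position by position.

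\paragraph{Main obstacle and alternative route.}
The main obstacle is making the boundary-coincidence argument watertight in the presence of long runs of equal $\N$-values. A cleaner way to circumvent this is to pass through the colored ornament picture (Theorem~\ref{thm:ornament}): one checks that the cyclic version of $f$, applied to a primitive circular word over $\N$ using cyclic decrease/increase information, yields a colored necklace (the three necklace conditions are verified by the same case analysis as in the linear setting, and primitivity rules out the forbidden length-one single barred necklace). This cyclic bijection preserves length, color vector, and bar count, so it sends the multiset of primitive circular classes underlying $w$ to an ornament of cv-cycle type $\check{\lambda}$ with exactly $k$ bars. Composing with Theorem~\ref{thm:ornament} then yields the claimed identity $Q_{\check{\lambda},k}=\sum_{w\in W(\check{\lambda},k)}\wt(w)$.
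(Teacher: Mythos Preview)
Your main route is exactly the paper's proof: invoke Theorem~\ref{new:banner} and show that the letter-map $f$ is a weight- and color-preserving bijection from words over $\N$ to colored banners that carries $0$-colored decrease values to barred letters and respects the Lyndon factorization (with respect to the order~\eqref{original:order}). The paper dispatches the Lyndon-factorization step with ``it is not hard to check,'' whereas you spell out the order-preservation of $f$ on distinct $\N$-values and the decrease/increase tie-break on equal values $ul$; this is the right ingredient and suffices. Your fallback through Theorem~\ref{thm:ornament} is not needed and is actually less clean than you suggest: the cv-cycle type of $w$ is \emph{defined} via its Lyndon factorization, so any route through ornaments still presupposes control of the Lyndon factors, and your ``cyclic $f$'' uses decrease/increase data computed inside each factor rather than in $w$, so it is a genuinely different map whose bijectivity with the Lyndon recombination would require a separate argument.
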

 \begin{proof}
Clearly, the mapping $f$ is a bijection which maps $0$-colored decrease values  to $0$-colored barred  letters and preserves the color of letters. It is also weight preserving $\wt(w)=\wt(f(w))$. Recall the order of $\mathcal{B}$ in~\eqref{original:order}.
It is not hard to check  that if the Lyndon factorization of a word $w$ over $\N$ is 
$$w=(w_1)(w_2)\cdots(w_k),$$ then the Lyndon factorization (with respect to the above order of $\mathcal{B}$) of the banner $f(w)$ is 
$$f(w)=(f(w_1))(f(w_2))\cdots(f(w_k)).$$ Thus $f$ also keeps the Lyndon factorization type, which would complete the proof in view of Theorem~\ref{new:banner}.
\end{proof}

\subsection{Combinatorics of the decrease value theorem}
Let $[0,r]^*$ be the set of all finite words whose letters are taken from the alphabet $[0,r]=\{0,1,\ldots,r\}$. Introduce six sequences of commuting variables $(X_i),(Y_i),(Z_i),(T_i),(Y_i'),(T_i')$ ($i=0,1,2,\ldots$), and for each word $w=w_1w_2\ldots w_n$ from $[0,r]^*$ define the {\em weight} $\psi(w)$ of $w$ to be
\begin{align}
\psi(w):=&\prod_{i\in\Des}X_{w_i}\prod_{i\in\Rise\setminus\Rec}Y_{w_i}\prod_{i\in\Dec\setminus\Des}Z_{w_i}\\
&\times\prod_{i\in(\Inc\setminus\Rise)\setminus\Rec}T_{w_i}\prod_{i\in\Rise\cap\Rec}Y_{w_i}'\prod_{i\in(\Inc\setminus\Rise)\cap\Rec}T_{w_i}'.\nonumber
\end{align}

The following  generating function for the set $[0,r]^*$ by the weight $\psi$ was calculated by Foata and Han~\cite{fh4} using the properties of {\em Foata's first fundamental transformation} on words (see~\cite[Chap.~10]{lo}) and a noncommutative version of {\em MacMahon Master Theorem} (see~\cite[Chap.~4]{cf}).  

\begin{thm}[Decrease value theorem] We have:
\begin{equation}\label{decrease}
\sum_{w\in[0,r]^*}\psi(w)=\frac{\frac{\prod\limits_{1\leq j\leq r}\frac{1-Z_j}{1-Z_j+X_j}}{\prod\limits_{0\leq j\leq r}\frac{1-T_j'}{1-T_j'+Y_j'}}}{1-\sum\limits_{1\leq k\leq r}\frac{\prod\limits_{1\leq j\leq k-1}\frac{1-Z_j}{1-Z_j+X_j}}{\prod\limits_{0\leq j\leq k-1}\frac{1-T_j}{1-T_j+Y_j}}\frac{X_k}{1-Z_k+X_k}}.
\end{equation}
\end{thm}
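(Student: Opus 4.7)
The plan is to follow the strategy of \cite{fh4}: combine Foata's first fundamental transformation (FFT) with a noncommutative MacMahon Master Theorem from \cite[Chap.~4]{cf}. The shape of the right-hand side of \eqref{decrease}, a quotient of the form $N/(1-S)$ with $S$ summed over $1\leq k\leq r$, already signals the appropriate combinatorial decomposition: each word should be broken up into records and intervening ``dips'' below the current record, with the geometric series $\sum_{n\geq0} S^n = 1/(1-S)$ enumerating arbitrary sequences of such dips.

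First, I would factor each word $w\in[0,r]^*$ canonically according to its left-to-right records, writing $w$ as an alternating sequence of record positions (whose values are weakly increasing) and intervening dip blocks of non-record letters, each strictly smaller than the current record level. Under this factorization the six statistic classes in $\psi(w)$ split cleanly: the primed variables $Y'_j, T'_j$ appear only at record positions, while the unprimed $X_j, Y_j, Z_j, T_j$ appear only inside the dips. I would then use FFT (or a direct record-based bijection) to argue that the contributions of distinct dips are independent in the generating function, reducing the problem to enumerating a single dip with prescribed upper bound.

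Next, I would evaluate a single dip bounded above by value $k$ as a noncommutative geometric sum, by grouping runs of equal values $j$ contributing powers of $Z_j$ (each terminated by either $X_j$, $Y_j$, or $T_j$ depending on the continuation). This yields a product of elementary quotients $\tfrac{1-Z_j}{1-Z_j+X_j}$ and $\tfrac{1-T_j+Y_j}{1-T_j}$ over $j=1,\ldots,k-1$. The asymmetry between the ranges $1\leq j\leq r$ and $0\leq j\leq r$ in \eqref{decrease} is then explained by the observation that $0$ can never be a decrease value (so no $X_0$ or $Z_0$ ever arises) but can perfectly well be a record (so $Y'_0, T'_0$ do arise). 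Combining the single-dip weight with the record weight $X_k/(1-Z_k+X_k)$ yields the $k$-th summand of $S$, and summing via the geometric series produces the denominator $1-S$; the trailing portion of $w$ after the last record, together with its primed variables, supplies the numerator~$N$.

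The main obstacle is the noncommutativity of the six families $X_i, Y_i, Z_i, T_i, Y'_i, T'_i$, which forbids naive geometric summation across letter values; this is precisely where the noncommutative MacMahon Master Theorem is indispensable. A secondary subtlety is verifying that the record/dip decomposition correctly refines all six statistic classes under FFT --- in particular matching $\Rise\cap\Rec$ and $(\Inc\setminus\Rise)\cap\Rec$ with records that open a strict rise versus a plateau respectively, and matching $\Dec\setminus\Des$ with the interior of $Z$-runs rather than their terminal descents. Once these correspondences are pinned down, \eqref{decrease} follows by direct algebraic manipulation of the assembled product.
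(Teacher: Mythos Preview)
The paper does not prove this theorem in full; it quotes it from Foata--Han \cite{fh4} (Foata's first fundamental transformation plus the Cartier--Foata noncommutative MacMahon Master Theorem) and then sketches an alternative: reduce \eqref{decrease} to the horizontal-derangement identity \eqref{decrease:spec}, apply FFT to turn each such word into a product of cycles of length $\geq 2$ with rises becoming excedances, and read off \eqref{decrease:spec} from the Kim--Zeng decomposition of those cycles into prime cycles.

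Your plan names the right tools but misapplies them in several places. First, the six families $X_i,Y_i,Z_i,T_i,Y'_i,T'_i$ are explicitly introduced as \emph{commuting} variables; the word ``noncommutative'' in the Master Theorem refers to the monoid of words, not to the weights, so your stated main obstacle is not the real one. Second, your record/dip split is not clean: a record position that is simultaneously a decrease (for instance position $1$ in the word $3\,3\,1$) receives weight $Z_j$ or $X_j$, not $Y'_j$ or $T'_j$, so unprimed variables do \emph{not} occur only inside dips. This breaks the separation claimed in your second paragraph, and with it the proposed interpretation of the numerator as ``the trailing portion after the last record together with its primed variables.'' Third, FFT is not a record-factorization device; it is a bijection to a cycle-type normal form and, in both \cite{fh4} and the paper's sketch, is applied \emph{after} the record and plateau contributions have been stripped off and one has reduced to the derangement-type core \eqref{decrease:spec}. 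A repair is to carry out that reduction first (which disposes of the primed variables and the plateaus by independent geometric-series arguments) and only then invoke FFT together with either the Master Theorem or the Kim--Zeng prime-cycle decomposition.
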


 We show in the following that one can also use the  {\em Kim-Zeng decomposition of multiderangement}~\cite{kz} (but not the word-analog of the Kim-Zeng decomposition developed in~\cite[Theorem~3.4]{fh2}) instead of MacMahon Master Theorem to prove the decrease value theorem combinatorially. 

A letter $w_i$ which is a record and also a rise value is called a {\em riserec value}. A word $w\in[0,r]^*$ having no equal letters in succession is called {\em horizontal derangement}.  Denote by $[0,r]_d^*$ the set of all the horizontal derangement words  in $[0,r]^*$ without riserec value.
It was shown in~\cite{fh4}  that the decrease value theorem is equivalent to 
\begin{align*}
\sum_{w\in[0,r]_d^*}\psi(w)
=\frac{1}{\prod\limits_{1\leq j\leq r}(1+X_j)-\sum\limits_{1\leq i\leq r}\left(\prod\limits_{0\leq j\leq i-1}(1+Y_j)\prod\limits_{i+1\leq j\leq r}(1+X_j)\right)X_i},
\end{align*}
which again can be rewritten as
\begin{align}\label{decrease:spec}
\sum_{w\in[0,r]_d^*}\psi(w)
=\frac{1}{1-\sum\limits_{1\leq i\leq r}\left(\left(\prod\limits_{0\leq j\leq i-1}(1+Y_j)-1\right)\prod\limits_{i+1\leq j\leq r}(1+X_j)\right)X_i}.
\end{align}
Using Foata's first fundamental transformation on words, we can factorize each word in $[0,r]_d^*$ as a product of cycles of length at least $2$, where  the rises of the word are transform to the  excedances of the cycles. Recall that a cycle $\sigma=s_1s_2\cdots s_k$ is called a {\em prime cycle} if there exists $i$, $2\leq i\leq k$, such that $s_1<\cdots<s_{i-1}<s_k<s_{k-1}<\cdots<s_i$.
By the two decompositions in~\cite{kz}, every cycle of length at least $2$ admits a decomposition to some components of prime cycles, from which we can see Eq.~\eqref{decrease:spec} directly. 

%\begin{remark}
%It was stated in~\cite{fh4} that one can also use the word-analog of the Kim-Zeng decomposition developed in~\cite[Theorem~3.4]{fh2} to give an alternative proof of the decrease value theory. But, as known, the word-analog of Kim-Zeng decomposition involves Lyndon factorization and is more complicated than the Kim-Zeng decomposition.
%\end{remark}

\subsection{A new proof of Hyatt's result}

Introduce three sequences of commuting variables $(\xi_i), (\eta_i), (\zeta_i),\,(i=0,1,2,\ldots)$
and make the  following substitutions:
$$
X_i=\xi_i,\quad Z_i=\xi_i,\quad Y_i=\eta_i,\quad T_i=\eta_i, \quad Y_i'=\zeta_i,\quad T_i'=\zeta_i\quad (i=0,1,2,\ldots).
$$
The new weight $\psi'(w)$ attached to each word $w=y_1y_2\cdots y_n$ is then
\begin{equation}\label{weight}
\psi'(w)=\prod_{i\in\Dec(w)}\xi_{y_i}\prod_{i\in(\Inc \setminus \Rec)(w)}\eta_{y_i}\prod_{i\in(\Inc \cap \Rec)(w)}\zeta_{y_i},
\end{equation}
and identity~\eqref{decrease} becomes:
\begin{equation}\label{decrease2}
\sum_{w\in[0,r]^*}\psi'(w)=\frac{\frac{\prod\limits_{1\leq j\leq r}(1-\xi_j)}{\prod\limits_{0\leq j\leq r}(1-\zeta_j)}}{1-\sum\limits_{1\leq k\leq r}\frac{\prod\limits_{1\leq j\leq k-1}(1-\xi_j)}{\prod\limits_{0\leq j\leq r}(1-\eta_j)}\xi_k}.
\end{equation}

%\subsection{First homomorphism}
Let $\eta$ denote the homomorphism defined by the following substitutions of variables:
\begin{align*}
\eta:=
\begin{cases}
\xi_j\leftarrow tY_{i-1}, \zeta_j\leftarrow r_0Y_{i}, \eta_j\leftarrow Y_{i},&\quad\text{if $j=li$}; \\
\xi_j\leftarrow s_mY_{i}, \zeta_j\leftarrow r_ms_mY_{i}, \eta_j\leftarrow s_mY_{i},&\quad\text{if $j=li+m$ for some $1\leq m\leq l-1$}.
\end{cases}
\end{align*}

\begin{lem}\label{le2} We have
\begin{align*}
\frac{\prod_{j\geq0}(1-sY_j)-\prod_{j\geq0}(1-Y_j)}{\prod_{j\geq0}(1-Y_j)}=(1-s)\sum_{i\geq0}Y_i\frac{\prod_{0\leq j\leq i-1}(1-sY_j)}{\prod_{0\leq j\leq i}(1-Y_j)}. 
\end{align*}
\end{lem}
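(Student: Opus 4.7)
The identity has the flavor of a telescoping sum, so the plan is to rewrite both sides in terms of the partial product
\[
a_i := \prod_{j=0}^{i}\frac{1-sY_j}{1-Y_j}\qquad (i\geq 0),\quad a_{-1}:=1,
\]
and show that the $i$-th summand on the right equals $a_i-a_{i-1}$ (up to the factor $(1-s)$), after which everything collapses.

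The key computation is the elementary identity
\[
\frac{1-sY_i}{1-Y_i}-1=\frac{(1-s)Y_i}{1-Y_i},
\]
from which
\[
a_i-a_{i-1}=a_{i-1}\left(\frac{1-sY_i}{1-Y_i}-1\right)=(1-s)\,Y_i\,\frac{\prod_{j=0}^{i-1}(1-sY_j)}{\prod_{j=0}^{i}(1-Y_j)}.
\]
This is exactly $(1-s)$ times the $i$-th term of the sum on the right-hand side of the lemma (the case $i=0$ works with the convention that the empty product equals $1$).

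Summing for $i=0,1,2,\ldots$ and telescoping gives
\[
(1-s)\sum_{i\geq 0}Y_i\,\frac{\prod_{0\leq j\leq i-1}(1-sY_j)}{\prod_{0\leq j\leq i}(1-Y_j)}=\lim_{N\to\infty}a_N-a_{-1}=\frac{\prod_{j\geq 0}(1-sY_j)}{\prod_{j\geq 0}(1-Y_j)}-1,
\]
which is the left-hand side after combining over the common denominator $\prod_{j\geq 0}(1-Y_j)$.

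The only subtle point is the legitimacy of taking the limit, i.e., the interpretation of the infinite products. Since the identity is used in the context of formal power series in the variables $Y_j$ (the $Y_i$ will be specialized to monomials $z\cdot(\text{other vars})$ carrying a positive degree in $z$), convergence is automatic in the $(z)$-adic topology, so the telescoping is valid term-by-term and no analytic issue arises. Thus the lemma reduces to the one-line calculation above, and I do not anticipate any real obstacle.
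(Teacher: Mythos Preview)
Your proof is correct and uses essentially the same telescoping idea as the paper. The paper telescopes the finite products $\prod_{0\leq j\leq i}(1-sY_j)\prod_{i+1\leq j\leq r}(1-Y_j)$, then divides by $\prod_{0\leq j\leq r}(1-Y_j)$ and lets $r\to\infty$; you telescope the ratios $a_i=\prod_{j=0}^{i}(1-sY_j)/(1-Y_j)$ directly, which is the same computation after dividing through, so the two arguments are equivalent.
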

\begin{proof}
First, we may check that 
\begin{align*}
&\prod_{0\leq j\leq r}(1-sY_j)-\prod_{0\leq j\leq r}(1-Y_j)\\
=&\sum_{0\leq i\leq r}\prod_{0\leq j\leq i}(1-sY_j)\prod_{i+1\leq j\leq r}(1-Y_j)-\sum_{0\leq i\leq r}\prod_{0\leq j\leq i-1}(1-sY_j)\prod_{i\leq j\leq r}(1-Y_j)\\
=&(1-s)\sum_{0\leq i\leq r}Y_i\prod_{0\leq j\leq i-1}(1-sY_j)\prod_{i+1\leq j\leq r}(1-Y_j).
\end{align*}
Multiplying both sides by $\frac{1}{\prod_{0\leq j\leq r}(1-Y_j)}$ yields 
\begin{align*}
\frac{\prod_{0\leq j\leq r}(1-sY_j)-\prod_{0\leq j\leq r}(1-Y_j)}{\prod_{0\leq j\leq r}(1-Y_j)}=(1-s)\sum_{0\leq i\leq r}Y_i\frac{\prod_{0\leq j\leq i-1}(1-sY_j)}{\prod_{0\leq j\leq i}(1-Y_j)}. 
\end{align*}
Letting $r$ tends to infinity, we get the desired formula.
\end{proof}

\begin{thm}We have
\begin{align}\label{maineq}
\lim_{r\rightarrow\infty}\sum_{w\in[0,r]^*}\eta\psi'(w)=\frac{H(r_0Y)(1-t)(\prod_{m=1}^{l-1}E(-s_mY)H(r_ms_mY))}{(1+\sum_{m=1}^{l-1})H(tY)-(t+\sum_{m=1}^{l-1})H(Y)},
\end{align}
where $H(tY)=\prod_{i\geq0}(1-tY_i)^{-1}$ and $E(sY)=\prod_{i\geq0}(1+sY_i)$.
\end{thm}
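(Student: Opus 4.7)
The plan is to apply the decrease value theorem in the form~\eqref{decrease2} to the substitution $\eta$, let $r\to\infty$, and simplify the resulting rational expression using Lemma~\ref{le2}. Write $A_k:=\prod_{1\le j\le k-1}(1-\xi_j)$ and $B_k:=\prod_{0\le j\le k-1}(1-\eta_j)$ for the two ingredients that appear under the summation in~\eqref{decrease2}.

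\textbf{Step 1: the prefactor.} First I would evaluate $\prod_{1\le j\le r}(1-\xi_j)/\prod_{0\le j\le r}(1-\zeta_j)$ after applying $\eta$ and passing to the limit $r\to\infty$. Sorting $j$ by its residue class modulo $l$, the indices with $j\equiv 0\pmod{l}$ (i.e.\ $j=li$ with $i\ge 1$) contribute to the numerator $\prod_{i\ge 0}(1-tY_i)=1/H(tY)$, while for each $1\le m\le l-1$ the indices with $j\equiv m\pmod{l}$ contribute $\prod_{i\ge 0}(1-s_mY_i)=E(-s_mY)$. A parallel splitting of the denominator yields $1/H(r_0Y)\cdot\prod_{m=1}^{l-1}1/H(r_ms_mY)$, so the whole prefactor equals
\[
\frac{H(r_0Y)\,\prod_{m=1}^{l-1}E(-s_mY)\,H(r_ms_mY)}{H(tY)}.
\]

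\textbf{Step 2: the inner sum.} Next I would split $\sum_{k\ge 1}A_k\xi_k/B_k$ according to the residue of $k$ modulo $l$. The key observation is that the ``colored'' factors $\prod_{i',m'}(1-s_{m'}Y_{i'})$ occurring in both $A_k$ and $B_k$ are identical and cancel, leaving a pure $(t,Y)$-ratio. A direct count gives, for $k=li$ (with $i\ge 1$), $A_k\xi_k/B_k=tY_{i-1}\prod_{i'=0}^{i-2}(1-tY_{i'})/\prod_{i'=0}^{i-1}(1-Y_{i'})$, and for $k=li+m$ (with $i\ge 0$, $1\le m\le l-1$), $A_k\xi_k/B_k=s_mY_i\prod_{i'=0}^{i-1}(1-tY_{i'})/\prod_{i'=0}^{i}(1-Y_{i'})$. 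Reindexing $i\mapsto i+1$ in the first family, the two cases share the same summand up to a scalar, giving
\[
\sum_{k\ge 1}\frac{A_k\xi_k}{B_k}=\Bigl(t+\sum_{m=1}^{l-1}s_m\Bigr)\sum_{i\ge 0}Y_i\frac{\prod_{i'=0}^{i-1}(1-tY_{i'})}{\prod_{i'=0}^{i}(1-Y_{i'})}.
\]

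\textbf{Step 3: closing with Lemma~\ref{le2}.} Applying Lemma~\ref{le2} with $s\leftarrow t$ evaluates the inner sum $S$ above as $(H(Y)-H(tY))/((1-t)H(tY))$, whence
\[
1-\Bigl(t+\sum_{m=1}^{l-1}s_m\Bigr)S=\frac{\bigl(1+\sum_{m=1}^{l-1}s_m\bigr)H(tY)-\bigl(t+\sum_{m=1}^{l-1}s_m\bigr)H(Y)}{(1-t)H(tY)}.
\]
Dividing the prefactor of Step~1 by this expression cancels the stray $H(tY)$ in the denominator and produces the factor $(1-t)$ in the numerator, recovering exactly the right-hand side of~\eqref{maineq}. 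The only real obstacle is the bookkeeping in Step~2: one must split the products over $1\le j\le k-1$ precisely by residue modulo $l$ in each of the two cases $k=li$ and $k=li+m$, so that the cancellation of the colored factors becomes transparent; once that is done, the remaining manipulations are purely formal.
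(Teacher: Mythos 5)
Your proposal is correct and follows essentially the same route as the paper: apply the specialized decrease value theorem \eqref{decrease2} under $\eta$, split both the prefactor and the inner sum by the residue of the index modulo $l$ (the colored factors cancelling between the two products in each summand, exactly as in the paper's intermediate formula), and then close with Lemma~\ref{le2} at $s=t$. Your bookkeeping in Step 2 (the cases $k=li$ and $k=li+m$ and the reindexing $i\mapsto i+1$) reproduces the paper's computation, and your final form makes explicit that the suppressed $s_m$'s belong in the sums of the stated right-hand side.
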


\begin{proof}
By~\eqref{decrease2}, we have
\begin{align*}
\sum_{w\in[0,r]^*}\eta\psi'(w)=&\frac{\frac{\prod_{1\leq i\leq \lfloor\frac{r}{l}\rfloor}(1-tY_{i-1})\prod_{m=1}^{l-1}\left(\prod_{0\leq i\leq \lfloor\frac{r-m}{l}\rfloor}(1-s_mY_i)\right)}{\prod_{0\leq i\leq \lfloor\frac{r}{l}\rfloor}(1-r_0Y_{i-1})\prod_{m=1}^{l-1}\left(\prod_{0\leq i\leq \lfloor\frac{r-m}{l}\rfloor}(1-r_ms_mY_i)\right)}}{1-\sum_{1\leq k\leq r}\frac{\prod_{1\leq i\leq\lfloor\frac{k-1}{l}\rfloor}(1-tY_{i-1})\prod_{m=1}^{l-1}\left(\prod_{0\leq i\leq \lfloor\frac{k-1-m}{l}\rfloor}(1-s_mY_i)\right)}{\prod_{0\leq i\leq\lfloor\frac{k-1}{l}\rfloor}(1-Y_{i})\prod_{m=1}^{l-1}\left(\prod_{0\leq i\leq \lfloor\frac{k-1-m}{l}\rfloor}(1-s_mY_i)\right)}\eta(\xi_k)}\\
=&\frac{\frac{\prod_{1\leq i\leq \lfloor\frac{r}{l}\rfloor}(1-tY_{i-1})\prod_{m=1}^{l-1}\left(\prod_{0\leq i\leq \lfloor\frac{r-m}{l}\rfloor}(1-s_mY_i)\right)}{\prod_{0\leq i\leq \lfloor\frac{r}{l}\rfloor}(1-r_0Y_{i-1})\prod_{m=1}^{l-1}\left(\prod_{0\leq i\leq \lfloor\frac{r-m}{l}\rfloor}(1-r_ms_mY_i)\right)}}{1-\sum_{1\leq k\leq r}\frac{\prod_{1\leq i\leq\lfloor\frac{k-1}{l}\rfloor}(1-tY_{i-1})}{\prod_{0\leq i\leq\lfloor\frac{k-1}{l}\rfloor}(1-Y_{i})}\eta(\xi_k)}.
\end{align*}

Thus, we obtain
\begin{align}\label{formula1}
\lim_{r\rightarrow\infty}\sum_{w\in[0,r]^*}\eta\psi'(w)=\frac{\frac{\prod_{i\geq 0}(1-tY_{i})\prod_{m=1}^{l-1}\prod_{i\geq 0}(1-s_mY_i)}{\prod_{i\geq 0}(1-r_0Y_{i-1})\prod_{m=1}^{l-1}\prod_{i\geq 0}(1-r_ms_mY_i)}}{1-\sum_{k\geq1}\frac{\prod_{1\leq i\leq\lfloor\frac{k-1}{l}\rfloor}(1-tY_{i-1})}{\prod_{0\leq i\leq\lfloor\frac{k-1}{l}\rfloor}(1-Y_{i})}\eta(\xi_k)}.
\end{align}
By the definition of $\eta$,
\begin{align*}
&1-\sum_{k\geq1}\frac{\prod_{1\leq i\leq\lfloor\frac{k-1}{l}\rfloor}(1-tY_{i-1})}{\prod_{0\leq i\leq\lfloor\frac{k-1}{l}\rfloor}(1-Y_{i})}\eta(\xi_k)\nonumber\\
=&1-\prod_{i\geq0}\frac{\prod_{0\leq j\leq i-1}(1-tY_j)}{\prod_{0\leq j\leq i}(1-tY_j)}tY_i-\sum_{m=1}^{l-1}\left(\prod_{i\geq0}\frac{\prod_{0\leq j\leq i-1}(1-tY_j)}{\prod_{0\leq j\leq i}(1-tY_j)}s_mY_i\right)\\
=&1-(t+\sum_{m=1}^{l-1}s_m)\prod_{i\geq0}\frac{\prod_{0\leq j\leq i-1}(1-tY_j)}{\prod_{0\leq j\leq i}(1-tY_j)}Y_i.
\end{align*}
By Lemma~\ref{le2}, the above identity becomes
\begin{align*}
&1-\sum_{k\geq1}\frac{\prod_{1\leq i\leq\lfloor\frac{k-1}{l}\rfloor}(1-tY_{i-1})}{\prod_{0\leq i\leq\lfloor\frac{k-1}{l}\rfloor}(1-Y_{i})}\eta(\xi_k)\\
=&1-\left(\frac{t+\sum_{m=1}^{l-1}s_m}{1-t}\right)\left(\frac{\prod_{j\geq0}(1-tY_j)-\prod_{j\geq0}(1-Y_j)}{\prod_{j\geq0}(1-Y_j)}\right).
\end{align*}
After substituting this expression into~\eqref{formula1}, we get~\eqref{maineq}
\end{proof}

Combining the above theorem with Lemma~\ref{word:version} we get a decrease value theorem approach to Hyatt's generating function~\eqref{quasi-B}.

%The further specializations of~\eqref{weight} and~\eqref{decrease2} require the following preparations. Fix a positive  integer $l$. For each word $w=x_1x_2\cdots x_n\in[0,r]^*$
%
%Fix $l\in\P$ and let $\mathcal{A}$ be the infinite totally ordered alphabet with letters and order given by
%$$
%\mathcal{A}:=\{1^0<1^1<\cdots<1^{l-1}<2^0<2^1\cdots<2^{l-1}<\cdots\}.
%$$ A word on $\mathcal{A}$ is called a colored word. For each word $w=x_1x_2\cdots x_n\in[0,r]^*$, we associate  a colored word $C_l(w)$ on $\mathcal{A}$ to it by 
%$$
%C_l(w):=y_1^{c_1}y_2^{c_2}\cdots y_n^{c_n},
%$$
%where $x_i=y_il+c_i$ with $0\leq c_i\leq l-1$ for $1\leq i\leq n$.

%\subsection{Second homomorphism}
%Let $\gamma'$ be the homomorphism defined by the following substitutions of variables:
%\begin{align*}
%\gamma':=
%\begin{cases}
%\xi_j\leftarrow t^lY'_{j-l}, \zeta_j\leftarrow r_0Y'_{j}, \eta_j\leftarrow Y'_j,&\quad\text{if $j\equiv0$ mod $l$}; \\
%\xi_j\leftarrow s_mt^mY'_{j}, \zeta_j\leftarrow r_ms_mt^mY'_{j}, \eta_j\leftarrow s_mt^mY'_j,&\quad\text{if $j\equiv m$ mod $l$ and $1\leq m\leq l-1$}.
%\end{cases}
%\end{align*}

%\section{Wreathed permutations}
%
%A triple $(c,w,\epsilon)$ such that $(w,\epsilon)=(w_1,\epsilon_1)(w_2,\epsilon_2)\cdots(w_n,\epsilon_n)\in C_l\wr \S_n$ and $c=c_1c_2\cdots c_n\in\Niw_n$ and such that the following two properties hold is called a {\em wreathed permutation} of order $n$: 
%\begin{itemize}
%\item[(i)] $c_j=c_{j+1}\Rightarrow (w_j,\epsilon_j)<(w_{j+1},\epsilon_{j+1})$;
%\item[(ii)] $c_j\equiv\epsilon_j$ (mod $l$).
%\end{itemize}

\section{Flag Eulerian quasisymmetric functions}
\label{flag:euler} 

Let $\cs(\Vec{\beta}):=\sum_{i=1}^{l-1}i\times\beta_i$ for each $\Vec{\beta}=(\beta_1,\ldots, \beta_{l-1})\in \N^{l-1}$. 
We define the {\em Flag Eulerian quasisymmetric functions} $Q_{n,k,j}$  as
$$
Q_{n,k,j}:=\sum_{i,\Vec{\alpha},\Vec{\beta}} Q_{n,i,\Vec{\alpha},\Vec{\beta}},
$$
where the sum is over all $i,\Vec{\alpha}\in\N^l,\Vec{\beta}\in\N^{l-1}$ such that $li+\cs(\Vec{\beta})=k$ and $\alpha_0=j$.

\begin{cor}[of Theorem~\ref{hyatt}] \label{hy:flagexc}
We have 
\begin{equation}\label{eq:flagexc}
\sum_{n,k,j\geq0} Q_{n,k,j}t^kr^jz^n=\frac{(1-t)H(rz)}{H(t^lz)-tH(z)},
\end{equation}
where $Q_{0,0,0}=1$.
\end{cor}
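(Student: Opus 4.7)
The plan is to derive Corollary~\ref{hy:flagexc} from Theorem~\ref{hyatt} by a single substitution of variables followed by an algebraic simplification that relies on the classical identity $H(w)E(-w)=1$ in the ring of symmetric functions.

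First I would apply the specialization
$$t\mapsto t^l,\qquad r_0\mapsto r,\qquad r_m\mapsto 1\ (1\le m\le l-1),\qquad s_m\mapsto t^m\ (1\le m\le l-1)$$
to both sides of~\eqref{quasi-B}. On the LHS, the generic monomial $t^{i}r^{\vec\alpha}s^{\vec\beta}$ attached to $Q_{n,i,\vec\alpha,\vec\beta}$ becomes $t^{\,li+\cs(\vec\beta)}r^{\alpha_0}$, since $\cs(\vec\beta)=\sum_{m=1}^{l-1}m\beta_m$. Grouping the sum according to the pair $(k,j)=(li+\cs(\vec\beta),\alpha_0)$ and invoking the defining formula for the flag Eulerian quasisymmetric function, the LHS collapses to $\sum_{n,k,j\ge 0}Q_{n,k,j}\,t^k r^j z^n$, which is exactly the generating function we want.

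Next I would simplify the RHS. Under the same substitution one has
$$1+\sum_{m=1}^{l-1}s_m\longmapsto\sum_{m=0}^{l-1}t^m=\frac{1-t^l}{1-t},\qquad t+\sum_{m=1}^{l-1}s_m\longmapsto\sum_{m=1}^{l}t^m=\frac{t(1-t^l)}{1-t},$$
so the denominator of~\eqref{quasi-B} becomes $\tfrac{1-t^l}{1-t}\bigl(H(t^l z)-tH(z)\bigr)$. In the numerator, $H(r_0z)(1-t)$ becomes $H(rz)(1-t^l)$, and each factor in the product over $m$ specializes to $E(-t^m z)H(t^m z)$.

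The key observation is that $H(w)E(-w)=1$ (equivalent to $\sum_{k=0}^{n}(-1)^k e_k h_{n-k}=0$ for $n\ge 1$), hence $\prod_{m=1}^{l-1}E(-t^m z)H(t^m z)=1$. Combining this with the fact that $(1-t^l)$ in the numerator cancels the prefactor $(1-t^l)/(1-t)$ in the denominator leaves $(1-t)H(rz)/(H(t^l z)-tH(z))$, as desired. Since every step is a direct algebraic manipulation, I do not foresee any substantive obstacle; the only thing that might be worth making explicit is the accounting in Step~1 showing that the substitution really does convert the refined statistics $(i,\vec\alpha,\vec\beta)$ into the coarser pair $(k,j)$ tracked by $Q_{n,k,j}$.
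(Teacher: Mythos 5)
Your proposal is correct and is essentially the proof intended by the paper, which states the corollary as a direct specialization of Theorem~\ref{hyatt}: substituting $t\mapsto t^l$, $r_0\mapsto r$, $r_m\mapsto 1$, $s_m\mapsto t^m$ into~\eqref{quasi-B}, using $H(w)E(-w)=1$ and the geometric sums to simplify, matches exactly how $Q_{n,k,j}$ is defined via $li+\cs(\vec\beta)=k$ and $\alpha_0=j$. No gaps.
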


For a positive integer, the polynomial $[n]_q$ is defined as 
$$
[n]_q:=1+q+\cdots+q^{n-1}.
$$
By convention, $[0]_q=0$. 
\begin{cor}Let $Q_n(t,r)=\sum_{j,k\geq0} Q_{n,k,j}t^kr^j$. Then $Q_n(r,t)$ satisfies the following recurrence relation:
\begin{equation}\label{rec:wachs}
Q_n(t,r)=r^nh_n+\sum_{k=0}^{n-1}Q_k(t,r)h_{n-k}t[l(n-k)-1]_t.
\end{equation}
Moreover,
\begin{equation}\label{exa:wachs}
Q_n(t,r)=\sum_{m}\sum_{k_0\geq0\atop{lk_1,\ldots,lk_m\geq2\atop\sum_{k_i}=n}}r^{k_0}h_{k_0}\prod_{i=1}^mh_{k_i}t[lk_i-1]_t.
\end{equation}
\end{cor}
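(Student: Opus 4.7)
The plan is to deduce both identities directly from the generating function formula in Corollary~\ref{hy:flagexc}. Set $F(z) := \sum_{n \geq 0} Q_n(t,r)\, z^n$, so that $F(z) = (1-t) H(rz) / (H(t^l z) - t H(z))$.

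For the recurrence \eqref{rec:wachs}, I would first rewrite $t[l(n-k)-1]_t = (t - t^{l(n-k)})/(1-t)$, so that \eqref{rec:wachs} becomes
$$
(1-t)\, Q_n(t,r) = (1-t)\, r^n h_n + \sum_{k=0}^{n-1} Q_k(t,r)\, h_{n-k}\bigl(t - t^{l(n-k)}\bigr).
$$
Multiplying by $z^n$, summing over $n \geq 0$, and applying the convolution rule to the sums $\sum_n z^n \sum_{k=0}^{n-1} Q_k h_{n-k} = F(z)(H(z)-1)$ and its $H(t^l z)$ analogue, yields
$$
(1-t) F(z) = (1-t) H(rz) + t F(z)\bigl(H(z) - 1\bigr) - F(z)\bigl(H(t^l z) - 1\bigr),
$$
which rearranges to $F(z)\bigl[H(t^l z) - t H(z)\bigr] = (1-t) H(rz)$. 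This is exactly Corollary~\ref{hy:flagexc}, and since the recurrence uniquely determines the sequence $Q_n(t,r)$ from $Q_0(t,r) = 1 = h_0$, it follows from Corollary~\ref{hy:flagexc}.

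For the explicit formula \eqref{exa:wachs}, the key algebraic identity is
$$
\frac{H(t^l z) - t H(z)}{1 - t} = 1 - \sum_{m \geq 1} h_m z^m\, t[lm-1]_t,
$$
which follows termwise from $(t^{lm} - t)/(1-t) = -t[lm-1]_t$. Plugging this into Corollary~\ref{hy:flagexc} gives
$$
F(z) = \frac{H(rz)}{1 - \sum_{m \geq 1} h_m z^m\, t[lm-1]_t}.
$$
Expanding the denominator as a geometric series, writing $H(rz) = \sum_{k_0 \geq 0} r^{k_0} h_{k_0} z^{k_0}$, and reading off the coefficient of $z^n$ produces precisely \eqref{exa:wachs}. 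The restriction $lk_i \geq 2$ in the statement costs nothing: the only term it removes is $l = k_i = 1$, for which $t[lk_i - 1]_t = t[0]_t = 0$ anyway.

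The whole argument is a routine algebraic manipulation of a single generating function, so there is no serious obstacle; the only care needed is to verify the algebraic identity in the middle display and to check that the $n = 0$ boundary is consistent, both of which are immediate.
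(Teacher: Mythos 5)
Your proof is correct and follows essentially the same route as the paper: both identities are deduced from Corollary~\ref{hy:flagexc} after rewriting the denominator so that the generating function reads $H(rz)\big/\bigl(1-\sum_{m\geq1}t[lm-1]_t\,h_mz^m\bigr)$. The only cosmetic difference is that the paper obtains \eqref{exa:wachs} by checking that its right-hand side satisfies the recurrence \eqref{rec:wachs}, whereas you expand the geometric series directly and read off the coefficient of $z^n$ --- the same algebra organized slightly differently.
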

\begin{proof} By~\eqref{eq:flagexc}, we have
\begin{equation*}
\sum_{n,k,j\geq0} Q_n(t,r)z^n=\frac{H(rz)}{1-\sum_{n\geq1}t[ln-1]_th_nz^n},
\end{equation*}
which is equivalent to~\eqref{rec:wachs}. It is not hard to show that the right-hand 
side of~\eqref{exa:wachs} satisfies the recurrence relation~\eqref{rec:wachs}. This proves~\eqref{exa:wachs}.
\end{proof}

Bagno and Garber~\cite{bg} introduced the \emph{flag excedance} statistic for each colored permutation $\pi\in C_l\wr\S_n$, denoted by $\fexc(\pi)$, as 
$$
 \fexc(\pi):=l\cdot\exc(\pi)+\sum_{i=1}^n\epsilon_i.
$$
Note that  when $l=1$, flag excedances are excedances on permutations. Define the number of fixed points of $\pi$, $\fix(\pi)$, by 
$$\fix(\pi):=\fix_0(\pi).$$
Now we define the {\em colored $(q,r)$-Eulerian polynomials} $A_n^{(l)}(t,r,q)$ by 
$$
A_n^{(l)}(t,r,q):=\sum_{\pi\in C_l\wr\S_n}t^{\fexc(\pi)}r^{\fix(\pi)}q^{(\maj-\exc)\pi}.
$$
Let $A_n^{(l)}(t,q):=A_n^{(l)}(t,1,q)$.
Then by~\eqref{colored:eulnum} and~\eqref{coloredn:eulnum},
$$
A_n^{(l)}(t,r,q)=\sum_{k,j}A_{n,k,j}^{(l)}(q)t^kr^j\quad\text{and}\quad A_n^{(l)}(t,q)=\sum_{k}A_{n,k}^{(l)}(q)t^k.
$$
The following specialization follows immediately from Lemma~\ref{DEX:lem}.

\begin{lem}\label{lem:psflag}
Let $Q_n(t,r)=\sum_{j,k\geq0} Q_{n,k,j}t^kr^j$. Then we have
$$
\ps(Q_n(t,r))=(q;q)_n^{-1}A_n^{(l)}(t,r,q).
$$
\end{lem}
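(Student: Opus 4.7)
The plan is to apply the principal specialization $\ps$ to the generating polynomial $Q_n(t,r) = \sum_{j,k\geq 0} Q_{n,k,j} t^k r^j$ term-by-term, expand each $Q_{n,k,j}$ using its definition as a sum of fixed point colored Eulerian quasisymmetric functions $Q_{n,i,\Vec{\alpha},\Vec{\beta}}$, and then invoke Lemma \ref{DEX:lem} to rewrite $\ps(Q_{n,i,\Vec{\alpha},\Vec{\beta}})$ as $(q;q)_n^{-1}$ times a sum over colored permutations of $q^{(\maj-\exc)\pi}$. The remaining work is bookkeeping: matching the weights $t^k r^j$ to the statistics $\fexc$ and $\fix$.

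The key identity needed is
\begin{equation*}
\fexc(\pi) = l\cdot\exc(\pi) + \sum_{i=1}^n \epsilon_i = l\cdot\exc(\pi) + \cs(\Vec{\col}(\pi)),
\end{equation*}
which follows directly from the definition of $\cs$ and the fact that $\sum_i \epsilon_i = \sum_{m=1}^{l-1} m\cdot\col_m(\pi)$. Together with $\fix(\pi) = \fix_0(\pi) = \alpha_0$, this tells us that the condition ``$li + \cs(\Vec{\beta}) = k$ with $\alpha_0 = j$, $\exc(\pi)=i$, $\Vec{\fix}(\pi)=\Vec{\alpha}$, $\Vec{\col}(\pi)=\Vec{\beta}$'' is exactly ``$\fexc(\pi) = k$ and $\fix(\pi) = j$.''

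Combining these observations, I would write
\begin{align*}
\ps(Q_n(t,r)) &= \sum_{j,k\geq 0} \ps(Q_{n,k,j}) t^k r^j
= \sum_{j,k\geq 0} \sum_{\substack{i,\Vec{\alpha},\Vec{\beta} \\ li+\cs(\Vec{\beta})=k,\,\alpha_0=j}} \ps(Q_{n,i,\Vec{\alpha},\Vec{\beta}}) t^k r^j \\
&= (q;q)_n^{-1} \sum_{\pi \in C_l \wr \S_n} t^{\fexc(\pi)} r^{\fix(\pi)} q^{(\maj-\exc)\pi} = (q;q)_n^{-1} A_n^{(l)}(t,r,q),
\end{align*}
after swapping the order of summation and reindexing via the correspondence above. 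There is no serious obstacle here; the statement is essentially a repackaging of Lemma \ref{DEX:lem} in terms of the flag statistics, and the only point requiring any care is the identification of $\fexc$ with $l\exc + \cs\circ\Vec{\col}$.
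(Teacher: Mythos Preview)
Your proof is correct and follows exactly the approach the paper intends: the paper states that this lemma ``follows immediately from Lemma~\ref{DEX:lem},'' and you have simply spelled out the bookkeeping that makes this immediate, namely the identification $\fexc(\pi)=l\exc(\pi)+\cs(\Vec{\col}(\pi))$ and $\fix(\pi)=\alpha_0$.
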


Let the {\em$q$-multinomial coefficient} ${\bmatrix n\\ k_0,\ldots,k_m\endbmatrix}_q$ be
$$
{\bmatrix n\\ k_0,\ldots,k_m\endbmatrix}_q=\frac{(q;q)_n}{(q;q)_{k_0}\cdots(q;q)_{k_m}}.
$$
Applying the specialization to both sides of~\eqref{eq:flagexc},~\eqref{rec:wachs} and~\eqref{exa:wachs} yields the following formulas for $A_n^{(l)}(t,r,q)$.

\begin{cor}
We have
\begin{equation}  \label{expo-color}
\sum_{n\geq0}A_n^{(l)}(t,r,q)\frac{z^n}{(q;q)_n}
=\frac{(1-t)e(rz;q)}{e(t^lz;q)-te(z;q)}.
\end{equation}
\end{cor}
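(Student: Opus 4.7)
The plan is simply to apply the stable principal specialization $\ps$ coefficient-wise (in the variables $t$, $r$, $z$) to the symmetric function identity~\eqref{eq:flagexc}, and then to interpret each side using the two tools already assembled: Lemma~\ref{lem:psflag} on the left-hand side and the easy fact $\ps(H(uz))=e(uz;q)$ on the right-hand side.

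First I would rewrite~\eqref{eq:flagexc} as
\begin{equation*}
\sum_{n\geq0}Q_n(t,r)\,z^n \;=\; \frac{(1-t)\,H(rz)}{H(t^lz)-tH(z)},
\end{equation*}
with $Q_n(t,r)=\sum_{k,j\geq0}Q_{n,k,j}t^kr^j$, and regard both sides as formal power series in $z$ whose coefficients lie in $\Lambda[t,r]$ (where $\Lambda$ denotes the ring of symmetric functions). The denominator has constant term $1-t$ with respect to $z$, so inverting it is legitimate in $\Lambda(\!(t)\!)[r][[z]]$. Because $\ps$ is a ring homomorphism on $\Lambda$, extending it coefficient-wise gives a ring homomorphism to $\mathbb{Q}(\!(t)\!)[r][[z,q]]$; in particular it commutes with the multiplication and the inversion of the denominator.

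Next I would evaluate the two sides. On the left, Lemma~\ref{lem:psflag} directly gives
\begin{equation*}
\ps(Q_n(t,r))=\frac{A_n^{(l)}(t,r,q)}{(q;q)_n},
\end{equation*}
so that the specialized LHS is $\sum_{n\geq0}A_n^{(l)}(t,r,q)\,z^n/(q;q)_n$. On the right, using $\ps(h_n)=1/(q;q)_n$ from the notational preliminaries and the linearity of $\ps$, for any scalar $u$ (here $u=1,r,t^l$ in turn)
\begin{equation*}
\ps(H(uz))=\sum_{n\geq0}\ps(h_n)\,u^nz^n=\sum_{n\geq0}\frac{(uz)^n}{(q;q)_n}=e(uz;q).
\end{equation*}
Substituting these three evaluations into the ratio yields exactly $\dfrac{(1-t)\,e(rz;q)}{e(t^lz;q)-t\,e(z;q)}$.

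There is essentially no obstacle beyond making the formal-power-series framework explicit: the only thing to check is that $\ps$, applied coefficient-wise in $z$, commutes with the division by $H(t^lz)-tH(z)$, which follows because that series is invertible in the appropriate completion and $\ps$ is a ring homomorphism on each $z$-coefficient. Equating the specializations of the two sides of~\eqref{eq:flagexc} then gives~\eqref{expo-color}, completing the proof.
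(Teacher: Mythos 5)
Your proposal is correct and matches the paper's own argument, which simply applies the stable principal specialization $\ps$ to~\eqref{eq:flagexc}, using Lemma~\ref{lem:psflag} on the left and $\ps(h_n)=1/(q;q)_n$ (so $\ps(H(uz))=e(uz;q)$) on the right. Your added care about extending $\ps$ coefficient-wise and commuting it with the inversion of the denominator is a sound elaboration of the same one-line specialization.
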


\begin{remark}
The above generalization of~\eqref{fixversion} can also be deduced from~\cite[Theorem~1.3]{fh3} through some calculations; see the proof of~\cite[Theorem~5.2]{fh3} for details.
\end{remark}

\begin{cor}We have
$$
A_n^{(l)}(t,r,q)=r^n+\sum_{k=0}^{n-1}{\bmatrix n\\ k\endbmatrix}_qA_k^{(l)}(t,r,q)t[l(n-k)-1]_t
$$
and
$$
A_n^{(l)}(t,r,q)=\sum_m\sum_{k_0\geq0\atop{lk_1,\ldots,lk_m\geq2\atop\sum_{k_i}=n}}{\bmatrix n\\ k_0,\ldots,k_m\endbmatrix}_qr^{k_0}\prod_{i=1}^m[lk_i-1]_t.
$$
\end{cor}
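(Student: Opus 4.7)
The plan is to deduce both identities by applying the stable principal specialization $\ps$ to the symmetric function identities~\eqref{rec:wachs} and~\eqref{exa:wachs} for $Q_n(t,r)$ stated in the preceding corollary, and then clearing denominators to expose the $q$-(multi)nomial coefficients. The two key ingredients are Lemma~\ref{lem:psflag}, which gives
\[
\ps(Q_n(t,r)) = \frac{A_n^{(l)}(t,r,q)}{(q;q)_n},
\]
and the identity $\ps(h_n) = 1/(q;q)_n$ recorded in the notation section, together with the fact that $\ps$ is a ring homomorphism (so it is multiplicative on products of $h_k$'s and treats $t,r$ as scalars).

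For the first identity, I would apply $\ps$ to both sides of~\eqref{rec:wachs}. The left-hand side becomes $A_n^{(l)}(t,r,q)/(q;q)_n$, while on the right-hand side the term $r^n h_n$ contributes $r^n/(q;q)_n$ and each summand $Q_k(t,r)\, h_{n-k}\, t[l(n-k)-1]_t$ becomes
\[
\frac{A_k^{(l)}(t,r,q)}{(q;q)_k (q;q)_{n-k}}\, t[l(n-k)-1]_t.
\]
Multiplying through by $(q;q)_n$ and recognizing $(q;q)_n/((q;q)_k(q;q)_{n-k}) = {\bmatrix n\\ k\endbmatrix}_q$ yields the claimed recurrence.

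For the second identity, I would repeat the same procedure with~\eqref{exa:wachs}. Applying $\ps$ termwise and using multiplicativity on $h_{k_0}\prod_{i=1}^m h_{k_i}$, each summand becomes
\[
\frac{r^{k_0}}{(q;q)_{k_0}\prod_{i=1}^m (q;q)_{k_i}} \prod_{i=1}^m t[lk_i-1]_t,
\]
and after multiplying both sides by $(q;q)_n$ the prefactor $(q;q)_n/\prod_i (q;q)_{k_i}$ is exactly the $q$-multinomial coefficient ${\bmatrix n\\ k_0,\ldots,k_m\endbmatrix}_q$, giving the stated closed form. There is no genuine obstacle here — the proof is a routine specialization — the only point of care is bookkeeping of the $(q;q)$-denominators so that they assemble into a single $q$-multinomial after clearing by $(q;q)_n$.
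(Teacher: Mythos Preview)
Your proposal is correct and follows exactly the paper's approach: the paper simply states that applying the stable principal specialization to \eqref{rec:wachs} and \eqref{exa:wachs} yields these formulas, and your write-up spells out precisely that computation using Lemma~\ref{lem:psflag} and $\ps(h_n)=1/(q;q)_n$. (Your version even makes explicit the factor $t$ in $\prod_{i=1}^m t[lk_i-1]_t$ coming from \eqref{exa:wachs}, which appears to have been dropped in the displayed statement.)
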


\subsection{Symmetry and unimodality}

Let $A(t)=a_rt^r+a_{r+1}t^{r+1}+\cdots+a_st^s$ be a nonzero polynomial in $t$ whose coefficients come from a partially ordered ring $R$. We say that $A(t)$ is {\em $t$-symmetric} (or symmetric when $t$ is understood) with center of symmetry $\frac{s+r}{2}$ if  $a_{r+k}=a_{s-k}$ for all $k=0,1,\ldots,s-r$ and also {\em$t$-unimodal} (or unimodal when $t$ is understood) if
$$
a_r\leq_R a_{r+1}\leq_R\cdots\leq_R a_{\lfloor\frac{s+r}{2}\rfloor}=a_{\lfloor\frac{s+r+1}{2}\rfloor}\leq\cdots\leq_R a_{s-1}\leq_R a_s.
$$
We also say that $A(t)$ is {\em log-concave} if $a_k^2\geq a_{k-1}a_{k+1}$ for  all $k=r+1,r+2,\ldots,s-1$. 

It is well known that a polynomial with positive coefficients and with only real roots is log-concave and that log-concavity implies unimodality. 
 For each fixed $n$, the Eulerian polynomial $A_n(t)=\sum_{k=0}^{n-1}A_{n,k}t^k$ is $t$-symmetric and has only real roots and therefore $t$-unimodal (see~\cite[p.~292]{co}). 
 
 The following fact is known~\cite[Proposition~1]{st3} and should not be difficult to prove. 
\begin{lem}\label{fact:unimodal}
The product of two symmetric unimodal polynomials with respective centers of symmetry $c_1$ and $c_2$ is symmetric and unimodal with center of symmetry $c_1+c_2$. 
\end{lem}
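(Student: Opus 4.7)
The plan is to reduce the statement to the (essentially trivial) case of ``atomic'' symmetric unimodal polynomials of the form $t^a+t^{a+1}+\cdots+t^{2c-a}$, and combine these building blocks. Throughout, I am assuming the coefficient ring $R$ is (a subring of) a partially ordered ring in which the positive cone is closed under addition, as is the case in all the applications in this paper.

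First I would set up notation: write $A(t)=\sum_{k}a_kt^k$ symmetric unimodal with center $c_1$ and $B(t)=\sum_k b_kt^k$ symmetric unimodal with center $c_2$, where all $a_k,b_k$ lie in the positive cone of $R$. The symmetry of the product $C(t):=A(t)B(t)$ with center $c_1+c_2$ is immediate from the functional identities $t^{2c_1}A(1/t)=A(t)$ and $t^{2c_2}B(1/t)=B(t)$, which together give $t^{2(c_1+c_2)}C(1/t)=C(t)$. So the genuine content lies in unimodality.

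The key step is the atom decomposition. For a symmetric unimodal polynomial $A(t)$ with center $c_1$, set $\alpha_a:=a_a-a_{a-1}$ for $a\le c_1$ (with the convention $a_{r-1}=0$); unimodality on the left half of the support together with symmetry shows $\alpha_a$ is positive, and a telescoping computation gives
\begin{equation*}
A(t)=\sum_{a\le c_1}\alpha_a\bigl(t^{a}+t^{a+1}+\cdots+t^{2c_1-a}\bigr).
\end{equation*}
Writing the analogous decomposition for $B(t)$ with coefficients $\beta_b$, the product becomes a positive linear combination
\begin{equation*}
C(t)=\sum_{a\le c_1,\,b\le c_2}\alpha_a\beta_b\,P_{a,b}(t),\qquad P_{a,b}(t):=\bigl(t^{a}+\cdots+t^{2c_1-a}\bigr)\bigl(t^{b}+\cdots+t^{2c_2-b}\bigr).
\end{equation*}
Since any positive linear combination of symmetric unimodal polynomials sharing the same center of symmetry is again symmetric unimodal, it suffices to show that each atomic product $P_{a,b}(t)$ is symmetric unimodal with center $c_1+c_2$.

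After factoring out $t^{a+b}$, the atomic product reduces to $(1+t+\cdots+t^{M})(1+t+\cdots+t^{N})$ with $M=2c_1-2a$ and $N=2c_2-2b$. Assuming $M\ge N$, a direct count shows the coefficient of $t^{k}$ in this product equals $\min(k+1,\,N+1,\,M+N-k+1)$, which, viewed as a function of $k\in\{0,1,\ldots,M+N\}$, is a plateau function: strictly increasing on $[0,N]$, constant on $[N,M]$, and strictly decreasing on $[M,M+N]$. This is visibly symmetric unimodal with center $(M+N)/2$. Restoring the shift by $t^{a+b}$ recenters at $a+b+(M+N)/2=c_1+c_2$, as required.

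The only real obstacle is justifying the atom decomposition in the setting of a partially ordered ring; this is the step where the non-negativity of the differences $\alpha_a,\beta_b$ is used (which is exactly what unimodality provides). Everything else is a direct multiplication of ``rectangles'' of $1$'s. Once the atomic product is handled, summing positively over $(a,b)$ delivers the lemma.
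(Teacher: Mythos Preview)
Your proof is correct and is in fact the standard argument (the decomposition into ``atoms'' $t^a+\cdots+t^{2c-a}$ is exactly how Stanley proves it in~\cite[Proposition~1]{st3}). The paper itself does not supply a proof of this lemma; it simply cites it as a known fact from Stanley's survey and remarks that it ``should not be difficult to prove,'' so there is nothing to compare against beyond noting that you have reproduced the classical proof.
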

 
 Let the {\em colored Eulerian polynomials} $A_n^{(l)}(t)$ be defined as 
 $$
 A_n^{(l)}(t):=A_n^{(l)}(t,1,1)=\sum_{\pi\in C_l\wr\S_n}t^{\fexc(\pi)}.
 $$
 Recently, Mongelli~\cite[Proposition~3.3]{mon} showed that 
 $$
 A_n^{(2)}(t)=A_n(t)(1+t)^n,
 $$
which implies that $A_n^{(2)}(t)$ is $t$-symmetry and has only real roots and therefore $t$-unimodal. His idea can be extended to general $l$. Actually, we can construct $\pi$ by putting the colors to all entries of $|\pi|$. Analyzing how the concerned statistics are changed according to the entry what we put the color to is an excedance or nonexcedance of $|\pi|$ then gives   
\begin{align*}
&\sum_{\pi\in C_l\wr\S_n}t^{l\cdot\exc(\pi)}s_1^{\col_1(\pi)}\cdots s_{l-1}^{\col_{l-1}(\pi)}\\
&=\sum_{h=0}^{n-1}A_{n,h}(t^l+s_1+s_2+\cdots+s_{l-1})^h(1+s_1+s_2+\cdots+s_{l-1})^{n-h}\\
&=A_n\left(\frac{t^l+s_1+s_2+\cdots+s_{l-1}}{1+s_1+s_2+\cdots+s_{l-1}}\right)(1+s_1+s_2+\cdots+s_{l-1})^n.
\end{align*}
Setting $s_i=t^i$ in the above equation yields
\begin{equation}\label{CSP}
A_n^{(l)}(t)=A_n(t)(1+t+t^2+\cdots+t^{l-1})^n.
\end{equation}
From this and Lemma~\ref{fact:unimodal} we see that $A_n^{(l)}(t)$ is $t$-symmetric and $t$-unimodal with center of symmetry $\frac{ln-1}{2}$ although not real-rootedness when $l>2$. Note that relationship~\eqref{CSP} can also be deduced from~\eqref{expo-color} directly. It is known (cf.~\cite[Proposition~2]{st3}) that the product of two 
log-concave polynomials with positive coefficients is again log-concave, thus by~\eqref{CSP} we have the following result.
\begin{pro} 
The polynomial $A_n^{(l)}(t)$ is $t$-symmetric and log-concave for $l\geq1$. In particular it is $t$-unimodal. 
\end{pro}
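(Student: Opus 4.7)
The plan is to read everything off from the factorization \eqref{CSP} established immediately before the statement, namely
\begin{equation*}
A_n^{(l)}(t)=A_n(t)\bigl(1+t+t^2+\cdots+t^{l-1}\bigr)^n.
\end{equation*}
The $t$-symmetry and $t$-unimodality assertions have in fact already been argued in the preceding paragraph via Lemma~\ref{fact:unimodal}; what remains is genuinely new content is the log-concavity claim, from which $t$-unimodality will follow as a formal consequence.

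For $t$-symmetry I would just record that $A_n(t)$ is symmetric with center $(n-1)/2$, that $1+t+\cdots+t^{l-1}$ is symmetric with center $(l-1)/2$, and that a product of symmetric polynomials is symmetric with center equal to the sum of the two centers. Taking the $n$-th power first and then multiplying by $A_n(t)$, the center comes out to $\frac{n-1}{2}+n\cdot\frac{l-1}{2}=\frac{ln-1}{2}$, matching the center already identified after \eqref{CSP}.

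For log-concavity the plan is to apply the cited fact (\cite[Proposition~2]{st3}) that a product of log-concave polynomials with positive coefficients is log-concave. The polynomial $A_n(t)$ is real-rooted (a classical fact about the Eulerian polynomial), so its coefficient sequence is log-concave. The polynomial $1+t+\cdots+t^{l-1}$ has constant positive coefficient sequence, trivially log-concave. Applying the product lemma $n$ times to build up $(1+t+\cdots+t^{l-1})^n$, and once more to multiply by $A_n(t)$, yields log-concavity of $A_n^{(l)}(t)$. Finally, since all coefficients are strictly positive and $a_k^2\geq a_{k-1}a_{k+1}$ forces the ratios $a_{k+1}/a_k$ to be weakly decreasing, log-concavity implies $t$-unimodality, proving the "in particular" clause.

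I do not expect any serious obstacle: the only nontrivial inputs are the real-rootedness of $A_n(t)$ and \cite[Proposition~2]{st3}, both standard. The only point worth a line of comment is that $(1+t+\cdots+t^{l-1})^n$ itself is log-concave; this is immediate from the product lemma applied iteratively, but one could also observe that $1+t+\cdots+t^{l-1}$ is real-rooted (its roots are the nontrivial $l$-th roots of unity)—wait, those are not real for $l>2$, so one genuinely needs the product-lemma route rather than appealing to real-rootedness of the second factor.
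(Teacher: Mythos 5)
Your proposal is correct and follows essentially the same route as the paper: read everything off the factorization $A_n^{(l)}(t)=A_n(t)(1+t+\cdots+t^{l-1})^n$ from \eqref{CSP}, use real-rootedness of $A_n(t)$ and the fact (\cite[Proposition~2]{st3}) that a product of log-concave polynomials with positive coefficients is log-concave, with symmetry and unimodality handled exactly as in the paragraph preceding the proposition. Your closing remark that $1+t+\cdots+t^{l-1}$ is not real-rooted for $l>2$, so the product lemma rather than real-rootedness must carry the second factor, is a correct and worthwhile observation consistent with the paper's argument.
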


 Let $d_n^B(t)$ be the generating function of the flag excedances on the derangements in $C_2\wr\S_n$, i.e.
$$
d_n^B(t):=\sum_{\pi\in C_2\wr\S_n\atop\fix(\pi)=0}t^{\fexc(\pi)}.
$$
Clearly, we have
$$d_n^B(t)=A_n^{(2)}(t,0,1).$$
At the end of~\cite{mon}, Mongelli noticed that $d_5^B(t)$ is not real-rootedness and conjectured that $d_n^B(t)$ is unimodal for any $n\geq1$.
 This conjecture motivates us to study the symmetry and unimodality of the coefficients of $t^k$ in the flag Eulerian quasisymmetric functions and the colored $(q,r)$-Eulerian polynomials. 
 
Let $\Par$ be the set of all partitions of all nonnegative integers. For a $\Q$-basis of the space of symmetric function $b=\{b_{\lambda} : \lambda\in\Par\}$, we have the partial order relation on the ring of symmetric functions given by 
$$
f\leq_b g\Leftrightarrow g-f\,\,\text{is $b$-positive},
$$
 where a symmetric function is said to be $b$-positive if it is a nonnegative linear combination of elements of the basis $\{b_{\lambda}\}$. Here we are concerned with the $h$-basis, $\{h_{\lambda} : \lambda\in\Par\}$ and the Schur basis $\{s_{\lambda} : \lambda\in\Par\}$. Since $h$-positivity implies Schur-positivity, the  following result also holds for the Schur basis.
 
The proof of the following theorem is similar to the proof of~\cite[Theorem~5.1]{sw}, which is the $l=1$ case of the following theorem.

\begin{thm} \label{hpositivity}
Let $Q_{n,k}=\sum_{j=0}^nQ_{n,k,j}$. Using the $h$-basis to partially order the ring of symmetric functions, we have for all $n,j,k$,
\begin{enumerate}
\item $Q_{n,k,j}$ is $h$-positive symmetric functions,
\item the polynomial $\sum_{k=0}^{ln-1}Q_{n,k,j}t^k$ is $t$-symmetric and $t$-unimodal with center of symmetry $\frac{l(n-j)}{2}$,
\item the polynomial $\sum_{k=0}^{ln-1}Q_{n,k}t^k$ is $t$-symmetric and $t$-unimodal with center of symmetry $\frac{ln-1}{2}$.
\end{enumerate}
\end{thm}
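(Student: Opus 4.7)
The plan is to derive all three assertions from the explicit formula~\eqref{exa:wachs} together with the recurrence~\eqref{rec:wachs} and the elementary Lemma~\ref{fact:unimodal}, in direct parallel with the $l=1$ argument of~\cite[Theorem~5.1]{sw}. Part~(1) is immediate: extracting the coefficient of $t^k r^j$ from~\eqref{exa:wachs} exhibits $Q_{n,k,j}$ as a nonnegative integer combination of monomials in the $h_i$'s, hence an $h$-positive symmetric function.

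For part~(2), I would fix $j$ and read off the coefficient of $r^j$ in~\eqref{exa:wachs} to obtain
\[
\sum_k Q_{n,k,j}\, t^k \;=\; h_j \sum_m \sum_{\substack{lk_1,\ldots,lk_m\geq 2 \\ k_1+\cdots+k_m=n-j}} \Bigl(\prod_{i=1}^m h_{k_i}\Bigr)\prod_{i=1}^m t[lk_i-1]_t.
\]
Each factor $t[lk_i-1]_t=t+t^2+\cdots+t^{lk_i-1}$ has constant positive coefficients, so it is $t$-symmetric and $t$-unimodal with center $lk_i/2$. Lemma~\ref{fact:unimodal} then makes the product $\prod_i t[lk_i-1]_t$ $t$-symmetric and $t$-unimodal with center $\sum_i lk_i/2=l(n-j)/2$. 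Multiplying by $h_j\prod_i h_{k_i}$, which is $h$-positive and independent of $t$, preserves both $h$-positivity and $t$-symmetry/unimodality, and since every summand shares the same center $l(n-j)/2$, the full sum inherits the stated properties.

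For part~(3), I would induct on $n$ using~\eqref{rec:wachs} at $r=1$, namely
\[
Q_n(t)=h_n+\sum_{k=0}^{n-1}Q_k(t)\,h_{n-k}\,t[l(n-k)-1]_t.
\]
The base case $n=1$ yields $Q_1(t)=h_1(1+t+\cdots+t^{l-1})$, which is $h$-positive, $t$-symmetric and $t$-unimodal with center $(l-1)/2$. For $n\geq 2$, the key maneuver is to absorb the stray $h_n$ into the $k=0$ contribution $h_n\,t[ln-1]_t$, producing $h_n(1+t+t^2+\cdots+t^{ln-1})$, which is $t$-symmetric and $t$-unimodal with center $(ln-1)/2$. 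For each $1\leq k\leq n-1$, the inductive hypothesis gives $Q_k(t)$ symmetric unimodal with center $(lk-1)/2$, while $h_{n-k}t[l(n-k)-1]_t$ is symmetric unimodal with center $l(n-k)/2$; Lemma~\ref{fact:unimodal} then yields a product symmetric and unimodal with center $(lk-1)/2+l(n-k)/2=(ln-1)/2$. All pieces thus share the common center $(ln-1)/2$, and summing them finishes the induction. The only delicate point is this initial regrouping, without which the lone $h_n$ would spoil the center matching; after that the argument is a mechanical iteration of Lemma~\ref{fact:unimodal}.
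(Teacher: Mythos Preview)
Your arguments for parts~(1) and~(2) coincide with the paper's: both read off $h$-positivity and the center $l(n-j)/2$ directly from the explicit expansion~\eqref{exa:wachs}, using Lemma~\ref{fact:unimodal} on each product $\prod_i t[lk_i-1]_t$.

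For part~(3) you take a different but equally valid route. The paper works directly with~\eqref{exa:wachs}, pairing each term $G_{k_1,\ldots,k_m}^{(l)}:=\prod_i h_{k_i}t[lk_i-1]_t$ from the $j=0$ piece with the term $h_{k_1}G_{k_2,\ldots,k_m}^{(l)}$ coming from $j=k_1\geq1$, and then uses $1+t[lk_1-1]_t=[lk_1]_t$ to see that each paired sum is symmetric and unimodal with center $(ln-1)/2$. You instead induct on $n$ via the recurrence~\eqref{rec:wachs}, absorbing the stray $h_n$ into the $k=0$ summand to form $h_n[ln]_t$ and then invoking the inductive hypothesis on $Q_k(t)$ for $1\leq k\leq n-1$. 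Both arguments hinge on the same identity $1+t[m-1]_t=[m]_t$; the paper's pairing is a one-shot decomposition, while your induction packages the same cancellation recursively. A minor bonus of your approach is that it treats all $l\geq1$ uniformly, whereas the paper's pairing is written under the assumption $l\geq2$ (the case $l=1$ being deferred to~\cite[Theorem~5.1]{sw}).
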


\begin{proof}
Part (1) follows from~\eqref{exa:wachs}.
We will use the fact in Lemma~\ref{fact:unimodal} to show Part (2) and (3).

By~\eqref{exa:wachs} we have 
$$
\sum_{k=0}^{ln-1}Q_{n,k,j}t^k=\sum_{m}\sum_{lk_1,\ldots,lk_m\geq2\atop\sum_{k_i}=n-j}h_j\prod_{i=1}^mh_{k_i}t[lk_i-1]_t.
$$
Each term $h_j\prod_{i=1}^mh_{k_i}t[lk_i-1]_t$ is $t$-symmetric and $t$-unimodal with center of symmetry $\sum_i\frac{lk_i}{2}=\frac{l(n-j)}{2}$. Hence the sum of these terms has the same property, which shows Part (2).

With a bit more effort we can show that Part (3) also follows from~\eqref{exa:wachs}.   For any sequence of positive integers $(k_1,\ldots,k_m)$, let 
$$
G_{k_1,\ldots,k_m}^{(l)}:=\prod_{i=1}^m h_{k_i}t[lk_i-1]_t.
$$
We have by~\eqref{exa:wachs}, 
$$
\sum_{k=0}^{ln-1}Q_{n,k,0}t^k=\sum_m\sum_{lk_1,\ldots,lk_m\geq2\atop\sum k_i=n} G_{k_1,\ldots,k_m}^{(l)}
$$
and
$$
\sum_{j\geq1}\sum_{k=0}^{ln-1}Q_{n,k,j}t^k=\sum_m\sum_{lk_1,\ldots,lk_m\geq2\atop\sum k_i=n} h_{k_1}G_{k_2,\ldots,k_m}^{(l)}
$$
assuming $l\geq2$. We claim that $G_{k_1,\ldots,k_m}^{(l)}+h_{k_1}G_{k_2,\ldots,k_m}^{(l)}$ is $t$-symmetric and $t$-unimodal with center of symmetry $\frac{ln-1}{2}$. Indeed, we have 
$$
G_{k_1,\ldots,k_m}^{(l)}+h_{k_1}G_{k_2,\ldots,k_m}^{(l)}=h_{k_1}(t[lk_1-1]_t+1)G_{k_2,\ldots,k_m}^{(l)}.
$$
Clearly $t[lk_1-1]_t+1=1+t+\cdots+t^{lk_1-1}$ is $t$-symmetric and $t$-unimodal with center of symmetry $\frac{lk_1-1}{2}$, and $G_{k_2,\ldots,k_m}$ is $t$-symmetric and $t$-unimodal with center of symmetry $\frac{l(n-k_1)}{2}$. Therefore our claim holds and the proof of Part (3) is complete because of
$$
\sum_{k=0}^{ln-1}Q_{n,k}t^k=\sum_{k=0}^{ln-1}Q_{n,k,0}t^k+\sum_{j\geq1}\sum_{k=0}^{ln-1}Q_{n,k,j}t^k.
$$
\end{proof}

\begin{remark}
We can give a bijective proof of the symmetric property
\begin{equation}\label{sym:flageul}
Q_{n,k,j}=Q_{n,l(n-j)-k,j}
\end{equation}
using the colored ornament interpretation of $Q_{n,k,j}$. 
We construct an involution $\varphi$ on colored ornaments such that 
if  the cv-cycle type of  a colored banner $R$ is
$$
\check{\lambda}(R)=\{(\lambda_1,\vec{\beta^1}),\ldots,(\lambda_r,\vec{\beta^r})\},
$$ then the cv-cycle type of $\varphi(R)$ is
$$
\check{\lambda}(\varphi(R))=\{(\lambda_1,\vec{\beta^1}^{\bot}),\ldots,(\lambda_r,\vec{\beta^r}^{\bot})\},
$$
where $\vec{\beta^{}}^{\bot}:=(\beta_{l-1},\beta_{l-2},\ldots,\beta_{1})$ for each $\vec{\beta}=(\beta_1,\ldots,\beta_{l-1})\in\N^{l-1}$. 
Let $R$ be a colored banner.
To obtain $\varphi(R)$, first we bar each unbarred $0$-colored letter of each nonsingleton colored necklace of $R$ and unbar each barred $0$-colored letter. Next we change the color of each $m$-colored  letter of $R$ to color $l-m$ for all $m=1,\ldots,l-1$. Finally for each $i$, we replace each occurrence of the $i$th smallest value in $R$ with the $i$th largest value leaving the bars and colors intact. 
 This involution shows~\eqref{sym:flageul} because $Q_{n,k,j}$ is a symmetric function by Theorem~\ref{symfun:cvcycle}.
\end{remark}

For the ring of polynomials $\Q[q]$, where $q$ is a indeterminate, we use the partial order relation:
$$
f(q)\leq_q g(q)\Leftrightarrow g(q)-f(q)\,\,\text{has nonnegative coefficients}.
$$
We will use the following simple fact from~\cite[Lemma~5.2]{sw}.

\begin{lem} \label{schur:ps}
If $f$ is a Schur positive homogeneous symmetric function of degree $n$ then $(q;q)_n\ps(f)$ is a polynomial in $q$ with nonnegative coefficients.
\end{lem}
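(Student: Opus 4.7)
The plan is to reduce to a single Schur function and then apply the classical principal specialization formula together with the $q$-hook-length formula. Since $f$ is Schur positive and homogeneous of degree $n$, I may write $f = \sum_{\lambda \vdash n} c_\lambda s_\lambda$ with each $c_\lambda \ge 0$. By $\Q$-linearity of $\ps$, it then suffices to show that $(q;q)_n \ps(s_\lambda)$ is a polynomial in $q$ with nonnegative coefficients for every $\lambda \vdash n$.

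Next I would invoke the standard principal specialization of a Schur function (see e.g.~\cite[Cor.~7.21.3]{st2}),
\[
\ps(s_\lambda) \;=\; s_\lambda(1,q,q^2,\ldots) \;=\; \frac{q^{b(\lambda)}}{\prod_{u \in \lambda}(1-q^{h(u)})},
\]
where $b(\lambda)=\sum_{i\ge 1}(i-1)\lambda_i$ and $h(u)$ denotes the hook length of the cell $u$ in the Young diagram of $\lambda$. Multiplying through by $(q;q)_n = \prod_{i=1}^n(1-q^i)$ yields
\[
(q;q)_n\,\ps(s_\lambda) \;=\; q^{b(\lambda)}\,\frac{(q;q)_n}{\prod_{u \in \lambda}(1-q^{h(u)})}.
\]

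The concluding step is to invoke the $q$-hook-length formula (cf.~\cite[Cor.~7.21.5]{st2}), which identifies the right-hand side above with the generating polynomial
\[
\sum_{T} q^{\maj(T)},
\]
where $T$ runs over the standard Young tableaux of shape $\lambda$. Because this is manifestly a polynomial in $q$ with nonnegative integer coefficients, the lemma follows by linearity. The main obstacle is purely bookkeeping: different sources normalize the $q$-hook-length formula in terms of either $\maj$ or the comajor index, so one must verify that the prefactor $q^{b(\lambda)}$ combines correctly with the chosen convention in order to recover an honest polynomial on the nose. Once the matching convention is fixed, the cancellation is immediate and no further work is required.
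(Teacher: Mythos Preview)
Your argument is correct. The paper does not give its own proof of this lemma but simply quotes it from \cite[Lemma~5.2]{sw}; your route through the stable principal specialization of $s_\lambda$ and the $q$-hook-length formula is exactly the standard justification, and your bookkeeping concern about $\maj$ versus comajor index is harmless since either convention yields a nonnegative polynomial (they differ by $q\mapsto q^{-1}$ up to a power of $q$).
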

\begin{thm}
For all $n,j$, 
\begin{enumerate}
\item The polynomial $\sum\limits_{\pi\in C_l\wr\S_n\atop\fix(\pi)=j}t^{\fexc(\pi)}q^{(\maj-\exc)\pi}$ is $t$-symmetric and $t$-unimodal with center of symmetry $\frac{l(n-j)}{2}$,
\item $A_n^{(l)}(t,q)$ is $t$-symmetric and $t$-unimodal with center of symmetry $\frac{ln-1}{2}$.
\end{enumerate}
\end{thm}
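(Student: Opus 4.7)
My plan is to deduce both parts by applying the stable principal specialization $\ps$ to Theorem~\ref{hpositivity}, following the template used in the proof of~\cite[Theorem~5.1]{sw} for the classical case $l=1$. The translation step is straightforward: extracting the coefficient of $t^kr^j$ from the identity in Lemma~\ref{lem:psflag} gives $A_{n,k,j}^{(l)}(q) = (q;q)_n \ps(Q_{n,k,j})$, and summing over $j$ gives $A_{n,k}^{(l)}(q) = (q;q)_n \ps(Q_{n,k})$. Thus the polynomials in parts~(1) and (2) are, up to the factor $(q;q)_n$, the images under $\ps$ of the $t$-polynomials appearing in parts~(2) and (3) of Theorem~\ref{hpositivity}.

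For the $t$-symmetry, I will use that Theorem~\ref{hpositivity}(2) asserts $\sum_k Q_{n,k,j} t^k$ is $t$-symmetric with center $\frac{l(n-j)}{2}$, i.e.\ $Q_{n,k,j} = Q_{n,l(n-j)-k,j}$ as symmetric functions; since $\ps$ is a ring homomorphism, this identity passes through to $A_{n,k,j}^{(l)}(q) = A_{n,l(n-j)-k,j}^{(l)}(q)$. The symmetry claim in part~(2) follows identically from Theorem~\ref{hpositivity}(3).

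For the $t$-unimodality, I will exploit the $h$-positivity packaged into the $\leq_h$ order. Theorem~\ref{hpositivity} tells us that for $k$ strictly below the stated center the difference $Q_{n,k+1,j} - Q_{n,k,j}$ (respectively $Q_{n,k+1} - Q_{n,k}$) is $h$-positive, hence Schur-positive. Each such difference is a homogeneous symmetric function of degree $n$, so Lemma~\ref{schur:ps} applies and guarantees that $(q;q)_n \ps$ sends it to a polynomial in $q$ with nonnegative coefficients. This yields $A_{n,k,j}^{(l)}(q) \leq_q A_{n,k+1,j}^{(l)}(q)$ on the lower half of the range, and combining with the $t$-symmetry just established extends unimodality across the full range; an identical argument gives part~(2).

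I do not anticipate a serious obstacle: the substantive content lies in Theorem~\ref{hpositivity}, and Lemma~\ref{schur:ps} is exactly the specialization tool needed to convert $h$-positivity into $q$-coefficient positivity. The only technical point worth verifying is that each $Q_{n,k,j}$ is homogeneous of degree $n$, which is immediate from its definition as a sum of fundamental quasisymmetric functions $F_{n,\DEX(\pi)}$ over colored permutations in $C_l \wr \S_n$.
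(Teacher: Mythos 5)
Your proposal is correct and follows essentially the same route as the paper: it applies the stable principal specialization via Lemma~\ref{lem:psflag} to parts (2) and (3) of Theorem~\ref{hpositivity}, using Lemma~\ref{schur:ps} (with $h$-positivity implying Schur positivity) to convert the $\leq_h$ comparisons into $\leq_q$ comparisons of coefficients. The only difference is that you spell out the symmetry and half-range unimodality transfer explicitly, which the paper leaves implicit.
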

\begin{proof}
Since $h$-positivity implies Schur positivity, by Lemma~\ref{schur:ps}, we have that if $f$ and $g$ are homogeneous symmetric functions of degree $n$  and $f\leq_h g$ then 
$$
(q;q)_n\ps(f)\leq_q (q;q)_n\ps(g).
$$
By Lemma~\ref{lem:psflag},  Part (1) and (2) are obtained by specializing Part (2) and (3) of Theorem~\ref{hpositivity}, respectively.
\end{proof}

\begin{remark}
Part (1) of the above theorem implies the unimodality of $d_n^B(t)$ as conjectured in~[Conjecture~8.1]\cite{mon}. 
% Let $\SD_n^D$ be the set of all derangements in the Weyl group of type $D_n$. The polynomial $\sum_{\pi\in\SD_n^D(t)}t^{\fexc(\pi)}$
% in~\cite[Conjecture~8.2]{mon} is also unimodal since by~\cite[Corollary~5.1]{mon},
%$$
%\sum_{\pi\in\SD_n^D(t)}t^{\fexc(\pi)}=\frac{1}{2}(d_n^B(t)+d_n^B(-t)).
%$$
Actually, Mongelli~\cite[Conjecture~8.1]{mon} also conjectured that $d_n^B(t)$ is log-concave. Note that using the continued fractions, Zeng~\cite{zeng} found a symmetric and unimodal expansion of $d_n^B(t)$,  which also implies the unimodality of $d_n^B(t)$.
\end{remark}

%Through similar discussion as the proof of~\eqref{CSP}
 
%\begin{thm} For all $\lambda\vdash n$ with exactly $k$ parts equal to $1$, and $j=0,1,\ldots,$
%
%
%\end{thm}

\subsection{Generalized symmetrical Eulerian identities} 
In the following, we will give proofs of the two generalized symmetrical Eulerian identities in the introduction.
\vskip 0.3cm
\noindent\textbf{Theorem~\ref{sym:iden1}.}
For $a,b\geq1$ and $j\geq0$ such that $a+b+1=l(n-j)$, 
\begin{equation}\label{symsuns:sym}
\sum_{i\geq0}h_{i}Q_{n-i,a,j}=\sum_{i\geq0}h_iQ_{n-i,b,j}.
\end{equation}

\begin{proof}
Cross-multiplying and expanding all the functions $H(z)$ in~\eqref{eq:flagexc}, we obtain
\begin{align*}
\sum_{n\geq0}h_n(t^lz)^n\sum_{n,j,k\geq0}Q_{n,k,j}r^jt^kz^n-t\sum_{n\geq0}h_nz^n\sum_{n,j,k\geq0}Q_{n,k,j}r^jt^kz^n=(1-t)\sum_{n\geq0}h_n(rz)^n.
\end{align*}
Now, identifying the coefficients of $z^n$ yields
\begin{align*}
\sum_{i,j,k}h_iQ_{n-i,k-li,j}r^jt^k-\sum_{i,j,k}h_iQ_{n-i,k-1,j}r^jt^k=(1-t)h_nr^n.
\end{align*}
Hence, for $j<n$, we can identity the coefficients of $r^jt^k$ and obtain 
\begin{align}\label{sym:function}
\sum_{i\geq0}h_iQ_{n-i,k-li,j}=\sum_{i\geq0}h_iQ_{n-i,k-1,j}.
\end{align}
Applying the symmetry property~\eqref{sym:flageul} to the left side of the above equation yields
$$
\sum_{i\geq0}h_iQ_{n-i,l(n-j)-k,j}=\sum_{i\geq0}h_iQ_{n-i,k-1,j},
$$
which becomes~\eqref{symsuns:sym} after setting $a=k-1$ and $b=l(n-j)-k$ since now $n>j$.
\end{proof}

\begin{proof}[{\bf A bijective proof of Theorem~\ref{sym:iden1}}]
This bijective proof involves both the colored ornament  and the colored banner interpretations  of $Q_{n,k,j}$.

We will give a bijective proof of
\begin{equation}\label{sym:flagwith}
Q_{n,k}=Q_{n,ln-k-1}
\end{equation}
by means of colored banners, using Theorem~\ref{new:banner}. We describe an involution $\theta$ on colored banners. Let $B$ be a colored banner. To obtain $\theta(B)$, first we bar each unbarred $0$-colored letter of $B$, except for the last letter, and unbar each barred letter. Next we change the color of each $m$-colored  letter of $B$ to color $l-m$ for all $m=1,\ldots,l-1$, except for the last letter, but change the color of the last letter from $a$ to $l-1-a$.
 Finally for each $i$, we replace each occurrence of the $i$th smallest value in $B$ with the $i$th largest value, leaving the bars and colors intact. 

Since $a+b=l(n-j)-1$, by~\eqref{sym:flagwith}  we have $Q_{n-j,a}=Q_{n-j,b}$, which is equivalent to
\begin{equation}\label{bi:sy}
\sum_{i\geq0}Q_{n-j,a,i}(\x)=\sum_{i\geq0}Q_{n-j,b,i}(\x).
\end{equation}
For any $m,k,i$, it is not hard to see from Theorem~\ref{thm:ornament} that
\begin{equation}\label{propertyGR}
Q_{m,k,i}=h_iQ_{m-i,k,0}.
\end{equation}
Thus, Eq.~\eqref{bi:sy} becomes
$$
\sum_{i\geq0}h_{i}Q_{n-j-i,a,0}=\sum_{i\geq0}h_iQ_{n-j-i,b,0}.
$$
Multiplying both sides by $h_j$ then gives
$$
\sum_{i\geq0}h_jh_{i}Q_{n-j-i,a,0}=\sum_{i\geq0}h_jh_iQ_{n-j-i,b,0}.
$$
Applying~\eqref{propertyGR} once again, we obtain~\eqref{symsuns:sym}.
\end{proof}

\begin{remark}As the analytical proof of Theorem~\ref{sym:iden1} is reversible, the above bijective  proof together with the bijective proof of~\eqref{sym:flageul} would provide a different proof of Corollary~\ref{hy:flagexc} using  interpretations of $Q_{n,k,j}$ as colored ornaments and colored banners. In particular, this gives an alternative approach to the step~3 in~\cite[Theorem~1.2]{sw}.
\end{remark}

\vskip 0.2cm
\noindent\textbf{Theorem~\ref{sym:iden2}.}
Let $Q_{n,k}=\sum_{j}Q_{n,k,j}$. For $a,b\geq1$ such that $a+b=ln$,
\begin{equation}\label{syms:sym}
\sum_{i=0}^{n-1}h_{i}Q_{n-i,a-1}=\sum_{i=0}^{n-1}h_iQ_{n-i,b-1}.
\end{equation}

\begin{proof} Letting $r=1$ in~\eqref{eq:flagexc} we have
$$
\sum_{n,k\geq0} Q_{n,k}t^kz^n=\frac{(1-t)H(z)}{H(t^lz)-tH(z)}.
$$
Subtracting both sides by $Q_{0,0}=1$ gives
$$
\sum_{n\geq1, k\geq0} Q_{n,k}t^kz^n=\frac{H(z)-H(t^lz)}{H(t^lz)-tH(z)}.
$$
By Cross-multiplying and then identifying the coefficients of $t^kz^n$ ($1\leq k\leq ln-1$) yields
$$
\sum_{i=0}^{n-1}h_iQ_{n-i,k-li}=\sum_{i=0}^{n-1}h_iQ_{n-i,k-1}.
$$
Applying the symmetry property~\eqref{sym:flagwith} to the left side then becomes
$$
\sum_{i=0}^{n-1}h_iQ_{n-i,ln-1-k}=\sum_{i=0}^{n-1}h_iQ_{n-i,k-1},
$$
which is~\eqref{syms:sym} when $a-1=k-1$ and $b-1=ln-1-k$ since now $1\leq k\leq ln-1$.
\end{proof}

To construct a bijective proof of Theorem~\ref{sym:iden2}, we need a refinement of the decomposition of the colored banners from~\cite{hy}. We first recall some definitions therein. 

A {\em $0$-colored marked sequence}, denoted $(\omega,b,0)$, is a weakly increasing sequence $\omega$ of positive integers, together with a positive integer $b$, which we call the mark, such that $1\leq b<\length(\omega)$. The set of all $0$-colored marked sequences with $\length(\omega)=n$ and mark equal to $b$ will be denoted $M(n,b,0)$.

For $m\in[l-1]$, a {\em$m$-colored marked sequence}, denoted $(\omega,b,m)$, is a weakly increasing sequence $\omega$ of positive integers, together with a  integer $b$ such that $0\leq b<\length(\omega)$. The set of all $m$-colored marked sequences with $\length(\omega)=n$ and mark equal to $b$ will be denoted $M(n,b,m)$.

%Let $K_0(n,j,\Vec{\beta})$ denote the set of all colored banners of length $n$, with Lyndon type  having no parts of size $1$, color vector equal to $\Vec{\beta}$, and $j$ bars, according to the original colored banner interpretation (see Remark~\ref{hyatt:ban}). 

%For $0\leq m\leq l-1$, the color vector $\Vec{\col}((\omega,b,m)))$ and weight $\wt(\omega)$ of a $m$-marked sequence $(\omega,b,m)$ with  $\length(\omega)=k$ are defined by
%$$\Vec{\col}((\omega,b,m))):=(\delta_{1,m},\delta_{2,m},\ldots,\delta_{l-1,m}),$$
%and 
%$$
%\wt(\omega):=x_{\omega_1}x_{\omega_2}\cdots x_{\omega_k}.
%$$

Here we will use the original colored banner interpretation, see Remark~\ref{hyatt:ban}.
Let $K_0(n,j,\Vec{\beta})$ denote the set of all colored banners of length $n$, with Lyndon type having no parts of size one formed by a $0$-colored letter, color vector equal to $\Vec{\beta}$ and $j$ bars. For $m\in[l-1]$ and $\beta_m>0$, define
$$
X_m:=\biguplus_{0\leq i\leq n-1\atop j-n+i<k\leq j}K_0(i,k,\Vec{\beta}(\hat{m}))\times M(n-i,j-k,m),
$$
where $\Vec{\beta}(\hat{m})=(\beta_1,\ldots,\beta_{m-1},\beta_m-1,\ldots,\beta_{l-1})$ and let $X_m:=0$ if $\beta_m=0$. We also define 
$$
X_0:=\biguplus_{0\leq i\leq n-2\atop j-n+i<k<j}K_0(i,k,\Vec{\beta})\times M(n-i,j-k,0).
$$
\begin{thm} \label{decomp:refine}
There is a bijection 
$$
\Upsilon : K_0(n,j,\beta)\rightarrow\biguplus_{m=0}^{l-1} X_m
$$
such that if $\Upsilon(B)=(B',(\omega,b,m))$, then $\wt(B)=\wt(B')\wt(\omega)$ and 
$\Vec{\beta}(\hat{m})=\Vec\col(B')$ if $m\geq1$ otherwise $\Vec{\beta}=\Vec\col(B')$. 
\end{thm}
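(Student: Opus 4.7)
The plan is to construct $\Upsilon$ by peeling off the last Lyndon factor of $B$ with respect to the order $<_B$ of Remark~\ref{hyatt:ban}. Write the Lyndon factorization of $B\in K_0(n,j,\Vec{\beta})$ as $B = L_1 L_2 \cdots L_{r-1} L_r$ and set $B' := L_1 \cdots L_{r-1}$ and $L := L_r$. The first coordinate of $\Upsilon(B)$ will be the prefix $B'$, and the second coordinate $(\omega,b,m)$ encodes the single Lyndon factor $L$ as follows: $\omega$ is the weakly increasing rearrangement of the absolute values of the letters of $L$ (so $\length(\omega)=n-i$), $m$ is a color label read off a distinguished letter of $L$, and $b$ is a positional index recording the bars of $L$ relative to that distinguished letter. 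If $m=0$ then $L$ is purely $0$-colored and the $K_0$-hypothesis forces $\length(L)\ge 2$; if $m\ge 1$ then $L$ contains precisely one marked $m$-colored letter whose removal from the color count produces $\Vec{\beta}(\hat m)$.

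The inverse is constructed by reading $\omega$ left to right and decorating each entry with bars and colors in the unique way compatible with $b$, $m$, and the banner/Lyndon conditions, thereby reconstructing $L$, and then concatenating $L$ onto $B'$. Four things must be verified. First, $B' \in K_0(i,k,\Vec{\beta}(\hat m))$ with the correct parameters; this follows from the stability of the banner property and of the $K_0$-condition under passing to an initial segment of the Lyndon factorization. Second, $(\omega,b,m)$ satisfies its defining constraints: the strict inequality $b < \length(\omega)$ reflects that the last letter of a banner is unbarred, while the extra lower bound $b \ge 1$ in the $m=0$ case reflects exactly the $K_0$-exclusion of length-one $0$-colored Lyndon factors. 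Third, the weight factorizes as $\wt(B) = \wt(B')\wt(\omega)$, which is immediate since both sides equal the monomial $\prod_u x_{|u|}$ over the letters $u$ of $B$. Fourth, the word $B'L$ recovered from the inverse must have $L$ as its genuine last Lyndon factor, which amounts to checking that $L$ is strictly larger (in $<_B$) than $L_{r-1}$; this follows from the control which $b$ and $m$ exert on the reconstructed decorations.

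The main obstacle is pinning down an explicit normal form for Lyndon words in the banner alphabet under $<_B$: specifically, that every such Lyndon word is uniquely determined by its multiset of absolute values together with a small amount of positional information recording the bars and the location of a distinguished colored letter. Once this structural lemma is in place, the interval bounds $j-n+i < k \le j$ in the definition of $X_m$ (for $m\ge 1$) and $j-n+i < k < j$ in the definition of $X_0$ match precisely the mark ranges $0\le b<\length(\omega)$ and $1\le b<\length(\omega)$ of the two marked-sequence types, and the remainder of the verification proceeds as a case analysis on $m\in\{0,1,\ldots,l-1\}$ modeled on the uncolored banner decomposition used in Shareshian--Wachs.
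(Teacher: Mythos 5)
Your plan has a fatal structural problem: the marked sequences in this decomposition do \emph{not} correspond to Lyndon factors, and peeling off the whole last Lyndon factor cannot satisfy the conditions in the statement. The condition $\Vec{\col}(B')=\Vec{\beta}(\hat{m})$ (for $m\geq 1$) forces the peeled-off piece to carry \emph{exactly one} positively colored letter, and $\Vec{\col}(B')=\Vec{\beta}$ (for $m=0$) forces it to carry none; but a Lyndon factor of a banner with respect to $<_B$ may contain arbitrarily many positively colored letters. For instance, with $l\geq 2$ the word $B=2^1\,1^1$ is a banner consisting of a single Lyndon factor (it is strictly larger than its rotation $1^1 2^1$), so $B\in K_0(2,0,(2,0,\ldots,0))$; removing its last Lyndon factor removes two $1$-colored letters at once, so no choice of $(\omega,b,m)$ makes $\Vec{\col}(B')$ equal to $\Vec{\beta}(\hat m)$ or $\Vec{\beta}$. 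The same example already defeats the ``structural lemma'' you defer to: a Lyndon word over $\mathcal{B}$ is not determined by its multiset of absolute values together with one mark and one color. A count makes this precise: for $l=2$ the Lyndon banner factors with absolute values $\{1,1,2\}$ are $\overline{2^0}1^01^0$, $\overline{2^0}1^01^1$, $\overline{2^0}1^11^0$, $\overline{2^0}1^11^1$, $\overline{2^0}\,\overline{1^0}1^0$, $\overline{2^0}\,\overline{1^0}1^1$ and $2^11^11^1$, i.e.\ seven words, whereas there are only five marked sequences $(\omega,b,m)$ with $\omega=112$ (two with $m=0$, three with $m=1$); so no bijective encoding of the kind you describe exists, and the interval bounds on $k$ in $X_m$ cannot be matched factor-by-factor.

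The actual proof does not work with the Lyndon factorization alone. It uses the finer \emph{increasing factorization} of D\'esarm\'enien--Wachs, in which every factor has the form $(a,\ldots,a)\cdot u$ with all letters of $u$ strictly smaller than $a$ in $<_B$ and the head letters weakly increasing; this refines the Lyndon factorization. The map $\Upsilon$ then treats the last increasing factor by cases: when its head letter $a$ is $0$-colored one applies Hyatt's bijection $\gamma$ (which produces the marked sequences of length at least two, of any color), and when $a$ has positive color one peels off only a \emph{single} positively colored letter, giving a length-one $m$-colored marked sequence with $b=0$ -- this is exactly how the color vector drops by one in a single coordinate. Iterating this one-piece-at-a-time surgery, not a factor-by-factor encoding of Lyndon words, is what yields Corollary~\ref{compo:inter}. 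If you want to salvage your approach you would have to abandon the idea that one Lyndon factor corresponds to one marked sequence and instead prove a removal step that extracts a single colored letter or a single run of equal head letters, which is precisely the route the paper takes.
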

\begin{proof} By~\cite[lemma~4.3]{dw}, every banner $B\in K_0(n,j,\beta)$ has a unique factorization, that we also called increasing factorization (here we admit parts of size one formed by a letter with positive color), $B=B_1\cdot B_2\cdots B_d$ where each $B_i$ has the form
$$
B_i=(\underbrace{a_i,...,a_i}_{p_i\text{ times}})\cdot u_i,
$$
where $a_i\in\mathcal{B}$, $p_i>0$ and $u_i$ is a word (possibly empty) over the alphabet $\mathcal{B}$ whose letters are all strictly less than $a_i$ with respect to $<_B$, $a_1\leq_B a_2\leq_B\cdots\leq_B a_d$ and if $u_i$  is empty  then $B_i=a_i$ and for each $k\geq i$ with $a_k=a_i$ we has $B_k=B_i=a_i$. Note that the increasing factorization is a refinement of the Lyndon factorization. 

For example, the Lyndon factorization of the banner 
$$
6^1,1^2, 5^1, 6^1, 6^1, \overline{4^0}, \overline{4^0}, 4^1, 4^0, \overline{4^0}, 3^2, 5^0, 7^1
$$
is 
$$
(6^1,1^2, 5^1)\cdot(6^1)\cdot(6^1)\cdot(\overline{4^0}, \overline{4^0}, 4^1, 4^0, \overline{4^0}, 3^2) \cdot(5^0, 7^1),
$$
and its increasing factorization is 
$$
(6^1,1^2, 5^1)\cdot(6^1)\cdot(6^1)\cdot(\overline{4^0}, \overline{4^0}, 4^1, 4^0)\cdot(\overline{4^0}, 3^2)\cdot(5^0, 7^1).
$$

First, we take the increasing factorization of $B$, say $B=B_1\cdot B_2\cdots B_d$. Let 
$$
B_d=(\underbrace{a,...,a}_{p\text{ times}})\cdot u,
$$
where $a\in\mathcal{B}$, $p>0$ and $u$ is a word (possibly empty) over $\mathcal{B}$ whose letters are all strictly less than $a$ with respect to the order $<_B$. Let  $\gamma$ be the bijection defined in~\cite[Theorem~4.5]{hy}. Now we describe the map $\Upsilon$. 
\vskip 0.1in
{\bf Case 1:} $a$ is $0$-colored. Define $\Upsilon(B)=\gamma(B)$.
\vskip 0.1in
{\bf Case 2:} $a$ has positive color and $u$ is not empty. Suppose that $u=i_1, i_2,\cdots, i_k$. 
\vskip 0.1in
{\bf Case 2.1:} 
If $k\geq2$, then define  $\omega=|i_1|$, $b=0$, $m$ is the color of $i_1$ and 
$B'=B_1\cdots B_{d-1}\cdot \widetilde{B_d}$, where 
$$\widetilde{B_d}
=\underbrace{a,...,a}_{p\text{ times}},i_2,\cdots,i_k.$$
\vskip 0.1in
{\bf Case 2.2:} 
If $k=1$, then define $\omega=|i_1|$, $b=0$, $m$ is the color of $i_1$ and 
$$B'=B_1\cdots B_{d-1}\cdot\underbrace{a\cdot a\cdots a}_{p\text{ times}},$$
where each $a$ is a factor.
\vskip 0.1in
{\bf Case 3:} $a$ has positive color and $u$ is empty. In this case $B_d=a$. Define $\omega=|a|$, $b=0$, $m$ is the color of $a$ and 
$B'=B_1\cdots B_{d-1}$.
\vskip 0.1in
This complete the description of the map $\Upsilon$. Next we describe $\Upsilon^{-1}$. Suppose we are given a banner $B$ with increasing factorization $B=B_1\cdots B_d$ where 
$$B_d=\underbrace{a,...,a}_{p\text{ times}},j_1,\cdots, j_k,$$ 
and a $m$-colored marked sequence $(\omega,b,m)$.
\vskip 0.1in
{\bf Case A:} (inverse of Case 1) $a$ is $0$-colored or $\length(\omega)\geq2$. Define 
$$\Upsilon^{-1}((B,(\omega,b,m)))=\gamma^{-1}((B,(\omega,b,m))).$$
\vskip 0.1in
{\bf Case B:} (inverse of Case 2.1) $a$ has positive color, $\omega=j_0$ is a letter with positive color $m$ and $j_1,\cdots, j_k$ is not empty. Then let $\Upsilon^{-1}((B,(\omega,b,m)))=B_1\cdots B_{d-1} \cdot\widetilde{B_d}$, where 
$$
\widetilde{B_d}=\underbrace{a,...,a}_{p\text{ times}},j_0,j_1,\cdots, j_k.
$$
\vskip 0.1in
{\bf Case C:} $a$ has positive color, $\omega=j_0$ is a letter with positive color $m$ and $B_d=a$. In this case, there exists an nonnegative integer $k$ such that $B_{d-k}=B_{d-k+1}=\cdots=B_d=a$ but $B_{d-k-1}\neq a$. 

\vskip 0.1in
{\bf Case C1:} (inverse of Case 3)  If $j_0\geq_B a$, then define 
$$
\Upsilon^{-1}((B,(\omega,b,m)))=B_1\cdots B_d\cdot j_0,
$$
where $j_0$ is a factor. 

\vskip 0.1in
{\bf Case C2:} (inverse of Case 2.2) Otherwise $j_0$ is strictly less than $a$ with respect to $<_B$ and we define $\Upsilon^{-1}((B,(\omega,b,m)))=B_1\cdots B_{d-k-1}\cdot\widetilde{B_{d-k}}$, where 
$$
\widetilde{B_{d-k}}=\underbrace{a,...,a}_{k+1\text{ times}},j_0.
$$
\vskip 0.1in
This completes the description of $\Upsilon^{-1}$. One can check case by case that both maps are well defined and in fact inverses of each other.
\end{proof}

For any nonnegative integers $i,j$, let $K_{j}(n,i,\Vec{\beta})$ denote the set of all colored banners of length $n$, with Lyndon type having $j$ parts of size one formed by a $0$-colored letter, color vector equal to $\Vec{\beta}$ and $i$ bars. Let $\Com_{j}(n,i,\Vec{\beta})$ be the set of all compositions
$$
\sigma=(\omega_0,(\omega_1,b_1,m_1),\ldots,(\omega_r,b_r,m_r))
$$
for some integer $r$, where $\omega_0$ is a weakly increasing word of positive integers of length $j$ and
each $(\omega_i,b_i,m_i)$ is a $m_i$-colored marked sequence and satisfying
$$
 \sum_{j=0}^r\length(\omega_j)=n,\quad\sum_{j=1}^r b_j=i\quad\text{and}\quad\Vec{\beta}=(\beta_1,\ldots,\beta_{l-1}),
$$
where $\beta_k$ equals the number of $m_i$ such that $m_i=k$. Define the weight of $\sigma$ by
$$
\wt(\sigma):=\wt(\omega_0)\cdots\wt(\omega_r).
$$

By Theorem~\ref{decomp:refine}, we can construct a weight preserving bijection between $K_{j}(n,i,\Vec{\beta})$ and $\Com_{j}(n,i,\Vec{\beta})$ by first factoring out  the $j$ parts of size one formed by a $0$-colored letter in the Lyndon factorization of a banner and then factoring out  marked sequences step by step in the increasing factorization of the remaining banner. Thus we have the following interpretation of $Q_{n,k,j}$.

\begin{cor}\label{compo:inter}
We have 
$$
Q_{n,k,j}=\sum_{i\in\N,\Vec{\beta}\in\N^{l-1}\atop{\sigma\in\Com_j(n,i,\Vec{\beta})\atop li+\cs(\Vec{\beta})=k}}\wt(\sigma).
$$
\end{cor}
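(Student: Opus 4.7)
The plan is to build the claimed bijection by iterating Theorem~\ref{decomp:refine}, peeling off one marked sequence at each step until the residual banner is empty.

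The starting point will be the original colored banner interpretation from Remark~\ref{hyatt:ban}: by summing the cv-cycle type version over all admissible $\check\lambda$, one gets
\[
Q_{n,k,j}=\sum_{i,\vec\beta\,:\,li+\cs(\vec\beta)=k}\ \sum_{B}\wt(B),
\]
where the inner sum runs over colored banners $B$ of length $n$ whose Lyndon factorization contains exactly $j$ parts of size one formed by a $0$-colored letter, with $i$ bars and color vector $\vec\beta$. Two facts are used here: length-one Lyndon factors correspond to fixed points, so the $0$-colored such factors count $0$-colored fixed points, while positively colored singletons (which contribute to $\vec\beta$) account for the unrestricted $\alpha_m$ with $m\ge1$; and the flag-excedance identity $\fexc=l\exc+\cs(\vec\col)$ on the permutation side becomes the condition $k=li+\cs(\vec\beta)$ on the banner side.

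Given such a $B$, I would first strip its $j$ zero-colored Lyndon singletons; being in Lyndon order they are automatically weakly increasing, so their absolute values form a weakly increasing word $\omega_0$ of length $j$, and what remains is a banner $B_0\in K_0(n-j,i,\vec\beta)$. I then iterate $\Upsilon$ on $B_0$: at step $s$ a marked sequence $(\omega_s,b_s,m_s)$ is extracted and the residual banner $B_s$ lies in $K_0(n_s,i_s,\vec\beta_s)$ with $n_s$ strictly smaller than $n_{s-1}$, so after finitely many steps $s=r$ the residual is empty, and the output is the composition $\sigma=(\omega_0,(\omega_1,b_1,m_1),\ldots,(\omega_r,b_r,m_r))$. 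Invertibility is immediate by iterating $\Upsilon^{-1}$ from the empty banner and reinserting the block $\omega_0$ as a run of $j$ zero-colored Lyndon singletons. Weight preservation $\wt(B)=\wt(\omega_0)\prod_s\wt(\omega_s)=\wt(\sigma)$ chains the weight clause of Theorem~\ref{decomp:refine} with the obvious fact that stripping singletons preserves weights, and the membership $\sigma\in\Com_j(n,i,\vec\beta)$ follows by induction on $r$, using the indexing of the $X_m$ to verify the length-sum, mark-sum, and color-vector conditions.

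The principal point of vigilance will be the bookkeeping of the mark inequalities $1\le b_s<\length(\omega_s)$ when $m_s=0$ versus $0\le b_s<\length(\omega_s)$ when $m_s\ge1$: these defining inequalities of $M(\length(\omega_s),b_s,m_s)$ must be re-derived at each iteration from the range constraints $j-n+i<k\le j$ (respectively $<j$) of $X_m$ in Theorem~\ref{decomp:refine} and shown to propagate correctly through the recursion, so that at the terminal step one recovers exactly the constraints defining $\Com_j(n,i,\vec\beta)$.
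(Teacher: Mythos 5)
Your proposal matches the paper's proof in essence: the paper also starts from the original colored banner interpretation, restricts to the sets $K_j(n,i,\Vec{\beta})$ of banners whose Lyndon factorization has exactly $j$ singleton $0$-colored parts, strips those singletons, and then applies the bijection $\Upsilon$ of Theorem~\ref{decomp:refine} step by step to the increasing factorization of the remaining banner, yielding a weight-preserving bijection with $\Com_j(n,i,\Vec{\beta})$ and hence the stated formula after summing over $i,\Vec{\beta}$ with $li+\cs(\Vec{\beta})=k$. Your bookkeeping concerns (mark ranges, reinsertion of the singleton block) are exactly the details the paper leaves implicit, so nothing is missing relative to its argument.
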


%Let $\Com(n,i,\Vec{\beta}):=\biguplus_{j=0}^n\Com_j(n,i,\Vec{\beta})$.

\begin{definition}
For each fixed positive integer $n$, a {\em two-fix-banner} of length $n$ is a sequence
\begin{equation}\label{two:banner}
\v=(\omega_0,(\omega_1,b_1,m_1),\ldots,(\omega_r,b_r,m_r),\omega'_0)
\end{equation}
satisfying the following conditions:
\begin{itemize}
\item[(C1)] $\omega_0$ and $\omega'_0$ are two weakly increasing sequences of positive integers, possibly empty;
\item[(C2)] each $(\omega_i,b_i,m_i)$ is a $m_i$-colored marked sequence;
\item[(C3)] $\length(\omega_0)+\length(\omega_1)+\cdots+\length(\omega_r)+\length(\omega'_0)=n$.
\end{itemize}
Define the {\em flag excedance} statistic of $\v$ by 
$$
\fexc(\v):=l\sum_{i=1}^r b_i +\sum_{i=1}^rm_r.
$$
Let $\TB_n$ denote the set of all two-fix-banners of length $n$.
\end{definition}

\begin{proof}[{\bf A bijective proof of Theorem~\ref{sym:iden2}}] The two-fix-banner $\v$ in~\eqref{two:banner} is in bijection with the pair $(\sigma,\omega)$, where $\omega=\omega'_0$ is a weakly increasing sequence of positive integers with length $i$ for some nonnegative integer $i$ and  $\sigma=(\omega_0,(\omega_1,b_1,m_1),\ldots,(\omega_r,b_r,m_r))$ is a composition with 
$\sum_{j=0}^r\length(\omega_j)=n-i$. Thus, by Corollary~\ref{compo:inter} we obtain the following interpretation.
\begin{lem}For any nonnegative integer $a$, we have
$$
\sum_{\v\in\TB_n\atop\fexc(\v)=a}\wt(\v)=\sum_{i=0}^{n-1}h_iQ_{n-i,a}.
$$
\end{lem}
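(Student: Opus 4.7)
The plan is to read off the identity directly from the bijection suggested just before the lemma statement, combining it with Corollary~\ref{compo:inter}. Given a two-fix-banner
$\v=(\omega_0,(\omega_1,b_1,m_1),\ldots,(\omega_r,b_r,m_r),\omega'_0)$
of length $n$, one strips off the trailing weakly increasing block by setting $\omega:=\omega'_0$ of length $i\in\{0,1,\ldots,n\}$ and $\sigma:=(\omega_0,(\omega_1,b_1,m_1),\ldots,(\omega_r,b_r,m_r))$. Then $\sigma$ is a composition in the sense of Corollary~\ref{compo:inter} with total length $n-i$, and the inverse map simply reattaches $\omega$ at the end of $\sigma$, so the assignment is a bijection.

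Two invariance properties then need to be checked. First, because $\wt$ is multiplicative over the constituent increasing sequences, $\wt(\v)=\wt(\sigma)\cdot\wt(\omega)$. Second, the statistic $\fexc(\v)=l\sum_{j=1}^{r}b_j+\sum_{j=1}^{r}m_j$ depends only on the marked sequences and so is carried entirely by $\sigma$; in the notation of Corollary~\ref{compo:inter} this is exactly $lI+\cs(\Vec{\beta})$, where $I=\sum_{j}b_j$ and $\Vec{\beta}$ is the vector counting the colors $m_j$. Grouping two-fix-banners by $i=\length(\omega'_0)$ thus yields
\begin{equation*}
\sum_{\v\in\TB_n,\,\fexc(\v)=a}\wt(\v)
=\sum_{i=0}^{n}\Bigl(\sum_{\omega}\wt(\omega)\Bigr)\Bigl(\sum_{\sigma}\wt(\sigma)\Bigr),
\end{equation*}
where $\omega$ runs over weakly increasing positive-integer sequences of length $i$ and $\sigma$ runs over compositions of total length $n-i$ with flag excedance equal to $a$.

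Finally, each inner sum is recognized: the $\omega$-sum is $h_i$ by the combinatorial definition of the complete homogeneous symmetric function, and, after summing Corollary~\ref{compo:inter} over the fixed-point count $j$ and color vector $\Vec{\beta}$ subject to $lI+\cs(\Vec{\beta})=a$, the $\sigma$-sum equals $\sum_{j}Q_{n-i,a,j}=Q_{n-i,a}$. Hence the total is $\sum_{i=0}^{n}h_iQ_{n-i,a}$; for $a\geq1$ the terminal $i=n$ term $h_n\cdot Q_{0,a}$ vanishes, reducing the sum to $\sum_{i=0}^{n-1}h_iQ_{n-i,a}$ as stated. I do not anticipate any real obstacle: the verification is essentially bookkeeping, since the definition of a two-fix-banner is designed so that the trailing block factors off multiplicatively in both weight and statistic; the only point to be careful about is making sure Corollary~\ref{compo:inter} is invoked with the correct summation over $j$ and $\Vec{\beta}$.
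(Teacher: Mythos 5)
Your argument is correct and is essentially the paper's own proof: the paper also strips off the trailing weakly increasing block to identify a two-fix-banner with a pair $(\sigma,\omega)$ and then invokes Corollary~\ref{compo:inter}, with the weight and flag-excedance bookkeeping exactly as you describe. Your extra remark about the vanishing $i=n$ term for $a\geq1$ is a harmless clarification of the same route.
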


By the above lemma, it suffices to construct an involution $\Phi :\TB_n\rightarrow\TB_n$ satisfying $\fexc(\v)+\fexc(\Phi(\v))=ln-2$ for each $\v\in\TB_n$. First we need to define two local involutions. For a weakly increasing sequence of positive integers $\omega$ with $\length(\omega)=k$, we define
$$
d'(\omega)=(\omega,k-1,l-1),
$$
which is a $(l-1)$-colored marked sequence. For a $m$-colored mark sequence $(\omega,b,m)$ with $\length(\omega)=k$, we define 
$$
d((\omega,b,m))=
\begin{cases}
(\omega,k-b,0) &\text{if $m=0$;}\\
(\omega,k-1-b,l-m),&\text{otherwise.}
\end{cases}
$$
We also define
$$
d'((\omega,b,m))=
\begin{cases}
\omega, &\text{if $b=k-1$ and $m=l-1$;}\\
(\omega,k-1-b,l-1-m),&\text{otherwise.}
\end{cases}
$$
One can check that $d$ and $d'$ are well-defined involutions. 

Let $\v$ be a two-fix-banner and write
$$\v=(\tau_0, \tau_1, \tau_2, \ldots, \tau_{r-1}, \tau_{r}, \tau_{r+1}),$$
where  $\tau_0=\omega_0$ and $\tau_{r+1}=\omega'_0$.
If $\tau_i$ (respectively $\tau_j$) is  the leftmost (respectively rightmost) non-empty
sequence (clearly $i=0, 1$ and $j=r, r+1$), we can write $\v$ in the following compact way by removing the empty sequences at the beginning or at the end:
\begin{equation}
\label{eq:v_compacted}
{\bf v}=(\tau_i, \tau_{i+1},  \ldots, \tau_{j-1}, \tau_{j}).
\end{equation}
It is easy to see that the above procedure is reversible 
by adding some necessary empty words at the two  ends of the compact form~\eqref{eq:v_compacted}. Now we work with the compact form.

If $i=j$ ($\v$ has only one sequence), 
we define 
$$\Phi(\v)=
\begin{cases}
(\emptyset, (\tau_i,n-1,l-2), \emptyset), &\text{if $\tau_i$ is a weakly increasing sequence;}\\
( \omega, \emptyset), &\text{if $\tau_i=(\omega,n-1,l-2)$ is a marked sequence;}\\
(\emptyset, (\omega,n-1-b,l-2-m), \emptyset), &\text{otherwise, suppose $\tau_i=(\omega,b,m)$.}
\end{cases}
$$

If $j>i$ ($\v$ has at least two sequences),  we define the two-fix-banner $\Phi(\v)$ by
$$
\Phi(\v)=(d'(\tau_i), d(\tau_{i+1}), d(\tau_{i+2}),  \ldots, d(\tau_{j-1}), d'(\tau_{j})).
$$

As $d$ and $d'$ are involutions, $\Phi$ is also an involution and one can check that in both cases $\Phi$ satisfy the desired property. This completes our bijective proof. 
\end{proof}

By Lemma~\ref{lem:psflag}, if we apply $\ps$ to both sides of~\eqref{symsuns:sym} and~\eqref{syms:sym} then we obtain the following two symmetrical $q$-Eulerian identities.

\begin{cor}
For $a,b\geq1$ and $j\geq0$ such that $a+b+1=l(n-j)$,
\begin{align}\label{sym:col2}
\sum_{k\geq0}{\bmatrix n\\ k\endbmatrix}_{q}A^{(l)}_{k,a,j}(q)
=\sum_{k\geq0}{\bmatrix n\\ k\endbmatrix}_{q}A^{(l)}_{k,b,j}(q).
\end{align}
\end{cor}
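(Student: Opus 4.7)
The plan is to derive the corollary mechanically from Theorem~\ref{sym:iden1} by applying the stable principal specialization $\ps$ to both sides, exactly as the paragraph preceding the corollary indicates. Since $\ps$ is a ring homomorphism, it distributes over the finite sums $\sum_{i\geq 0} h_i Q_{n-i,a,j}$ appearing in~\eqref{symsuns:sym}.

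First, I would recall the two specialization facts I need. From the notation section, $\ps(h_i)=1/(q;q)_i$. From Lemma~\ref{lem:psflag}, extracting the coefficient of $t^a r^j$ on both sides yields
\begin{equation*}
\ps(Q_{m,a,j}) \;=\; \frac{A^{(l)}_{m,a,j}(q)}{(q;q)_m}
\end{equation*}
for every $m,a,j$.

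Next, I would apply $\ps$ termwise to~\eqref{symsuns:sym}. The left-hand side becomes
\begin{equation*}
\sum_{i\geq 0} \ps(h_i)\,\ps(Q_{n-i,a,j}) \;=\; \sum_{i\geq 0} \frac{1}{(q;q)_i}\cdot \frac{A^{(l)}_{n-i,a,j}(q)}{(q;q)_{n-i}},
\end{equation*}
and analogously for the right-hand side with $a$ replaced by $b$. Multiplying the resulting identity through by $(q;q)_n$ produces the $q$-binomial coefficient ${n \brack i}_q = (q;q)_n/((q;q)_i(q;q)_{n-i})$ on each side. Finally, I would re-index by $k=n-i$ and use the symmetry ${n \brack n-k}_q = {n \brack k}_q$ to bring the identity into the form stated in the corollary. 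The ranges collapse to $k\geq 0$ since $A^{(l)}_{k,a,j}(q)=0$ for $k>n$ (and in fact $k>n-j$ is already outside the relevant range), matching the upper bound implicit in the original sum.

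There is no real obstacle here: the whole step is a single-line computation once the specialization formula of Lemma~\ref{lem:psflag} and the homomorphism property of $\ps$ are invoked. The only care needed is the index-substitution bookkeeping (turning the $i$-sum over $h_i\cdot Q_{n-i,\cdot,j}$ into the $k$-sum over ${n\brack k}_q A^{(l)}_{k,\cdot,j}(q)$), and noting that the hypothesis $a+b+1=l(n-j)$ is simply inherited from Theorem~\ref{sym:iden1} and plays no additional role in this specialization step.
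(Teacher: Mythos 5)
Your proposal is correct and is exactly the paper's argument: the paper deduces the corollary by applying $\ps$ to both sides of~\eqref{symsuns:sym}, invoking Lemma~\ref{lem:psflag} (coefficient of $t^ar^j$) together with $\ps(h_i)=1/(q;q)_i$, and then clearing $(q;q)_n$ to produce the $q$-binomial coefficients. Your index bookkeeping (setting $k=n-i$ and using ${n\brack n-k}_q={n\brack k}_q$) is the only detail the paper leaves implicit, and you handle it correctly.
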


\begin{cor}
For $a,b\geq1$ such that $a+b=ln$,
\begin{equation} \label{sym:col1}
\sum_{k\geq1}{\bmatrix n\\ k\endbmatrix}_{q}A^{(l)}_{k,a-1}(q)
=\sum_{k\geq1}{\bmatrix n\\ k\endbmatrix}_{q}A^{(l)}_{k,b-1}(q).
\end{equation}
\end{cor}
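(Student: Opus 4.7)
The plan is to derive this corollary by applying the stable principal specialization $\ps$ to the symmetric-function identity of Theorem~\ref{sym:iden2}. The identity Theorem~\ref{sym:iden2} states
\[
\sum_{i=0}^{n-1}h_{i}Q_{n-i,a-1}=\sum_{i=0}^{n-1}h_iQ_{n-i,b-1}
\]
for $a+b=ln$, and the target is a $q$-identity involving the $q$-binomial coefficient and the colored $q$-Eulerian polynomial $A^{(l)}_{k,c}(q)$. So the work is essentially bookkeeping: push $\ps$ through the identity and identify the resulting coefficients.

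First I would apply $\ps$ to both sides. Since $\ps$ is a ring homomorphism, $\ps(h_iQ_{n-i,c})=\ps(h_i)\ps(Q_{n-i,c})$. Using the formula $\ps(h_m)=1/(q;q)_m$ from the preliminaries on quasisymmetric functions, and using Lemma~\ref{lem:psflag} combined with the decomposition $Q_{n,k}=\sum_j Q_{n,k,j}$ to obtain
\[
\ps(Q_{n-i,c})=(q;q)_{n-i}^{-1}A^{(l)}_{n-i,c}(q),
\]
each summand becomes $\frac{A^{(l)}_{n-i,c}(q)}{(q;q)_i(q;q)_{n-i}}$.

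Next I would clear denominators by multiplying both sides by $(q;q)_n$. This produces $\frac{(q;q)_n}{(q;q)_i(q;q)_{n-i}}={n\brack i}_q={n\brack n-i}_q$ in front of $A^{(l)}_{n-i,a-1}(q)$ and $A^{(l)}_{n-i,b-1}(q)$. Reindexing the sum via $k=n-i$ (so $k$ runs from $1$ to $n$, noting that the $i=n$ term drops out because the sum only runs to $n-1$) produces exactly
\[
\sum_{k\geq1}{n\brack k}_q A^{(l)}_{k,a-1}(q)=\sum_{k\geq1}{n\brack k}_q A^{(l)}_{k,b-1}(q),
\]
which is the claim. There is no real obstacle here: the entire proof is a routine specialization, with the only mild subtlety being that one must collapse the $j$-index using $Q_{n,k}=\sum_j Q_{n,k,j}$ and $A^{(l)}_{n,k}(q)=\sum_j A^{(l)}_{n,k,j}(q)$ before reading off the coefficients, and use the $q$-binomial symmetry ${n\brack i}_q={n\brack n-i}_q$ after reindexing.
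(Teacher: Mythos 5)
Your proposal is correct and follows exactly the paper's route: the paper obtains \eqref{sym:col1} by applying $\ps$ to Theorem~\ref{sym:iden2} via Lemma~\ref{lem:psflag}, and your bookkeeping (using $\ps(h_i)=1/(q;q)_i$, multiplying by $(q;q)_n$, and reindexing $k=n-i$ with the symmetry ${n\brack i}_q={n\brack n-i}_q$) just spells out the routine details the paper leaves implicit.
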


\subsection{Two interpretations of colored $(q,r)$-Eulerian polynomials} 
We will introduce the colored hook factorization of a colored permutation.
A word $w=w_1w_2\ldots w_m$ over $\N$ is called a \emph{hook} if $w_1>w_2$ and either $m=2$, or $m\geq 3$ and $w_2<w_3<\ldots<w_m$. We can extend the hooks to colored hooks. Let 
$$
[n]^l\subset\N^l:=\left\{1^{0}, 1^{1}, \ldots, 1^{l-1},
2^{0}, 2^{1}, \ldots, 2^{l-1},\ldots,
i^0, i^1, \ldots, i^{l-1},\ldots\right\}.
$$
A word $w=w_1w_2\ldots w_m$ over $\N^l$ is called a {\em colored hook} if 
\begin{itemize}
\item  $m\geq2$ and $|w|$ is a hook with only $w_1$ may have positive color;
\item or $m\geq1$ and $|w|$ is an increasing word and only $w_1$ has positive color.
\end{itemize}
Clearly, each colored permutation $\pi =\pi_1\pi_2\dots \pi_n\in C_l\wr\S_n$ admits a unique factorization, called its \emph{colored hook factorization}, $p\tau_{1}\tau_{2}. . . \tau_{r}$, where $p$ is a word formed by $0$-colored letters, $|p|$ is an increasing word over $\N$ and each factor $\tau_1$, $\tau_2$, \ldots, $\tau_k$ is a colored hook. 
To derive the colored hook factorization of a colored permutation, one can start from the right and factor out each colored hook step by step. When $l=1$, colored hook factorization is the hook factorization introduced by Gessel~\cite{ge} in his study of the derangement numbers.

For example, the colored hook factorization of 
\begin{equation}\label{hook:exam}
2^0\,4^0\,5^1\,8^0\,3^0\, 7^0\,10^1\,1^0\,9^0\,6^1\in C_2\wr\S_{10}
\end{equation}
is 
$$
2^0\,4^0\,|5^1\,|8^0\,3^0\, 7^0\,|10^1\,1^0\,9^0\,|6^1.
$$

Let $w=w_1w_2\ldots w_m$ be a word over $\N$. Define 
$$
\inv(w):=|\{(i,j) : i<j, w_i>w_j\}|.
$$
For a colored permutation $\pi=\in C_l\wr\S_n$ with colored  hook factorization $p\tau_{1}\tau_{2}. . . \tau_{r}$, we define 
$$
\inv(\pi):=\inv(|\pi|)\quad\text{and}\quad\lec(\pi):=\sum_{i=1}^r\inv(|\tau_i|).
$$
We also define 
$$
\flec(\pi):=l\cdot\lec(\pi)+\sum_{i=1}^n\epsilon_i\quad\text{and}\quad\pix(\pi):=\length(p).
$$
For example, if $\pi$ is the colored permutation in~\eqref{hook:exam}, then $\inv(\pi)=16$, $\lec(\pi)=4$, $\flec(\pi)=11$ and $\pix(\pi)=2$.

Through some similar calculations as~\cite[Theorem~4]{fh1}, we can prove the following interpretation of the colored $(q,r)$-Eulerian polynomial $A_n^{(l)}(t,r,q)$. 
\begin{thm}\label{th:hook}
 For $n\geq1$, we have
$$
A_n^{(l)}(t,r,q)=\sum_{\pi\in C_l\wr\S_n} t^{\flec(\pi)}r^{\pix(\pi)}q^{\inv(\pi)-\lec(\pi)}.
$$
\end{thm}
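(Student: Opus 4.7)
The plan is to prove the identity by showing that the right-hand side has the same exponential generating function as $A_n^{(l)}(t,r,q)$, namely the closed form~\eqref{expo-color}. The right-hand side would be computed by decomposing each $\pi\in C_l\wr\S_n$ according to its colored hook factorization $\pi = p\,\tau_1\cdots\tau_r$. This approach parallels the argument of Foata and Han for the uncolored case in~\cite{fh1}.

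First, because $|p|$ is strictly increasing and $\lec(\pi)=\sum_{i=1}^r\inv(|\tau_i|)$ by definition, the difference $\inv(\pi)-\lec(\pi)$ counts precisely the ``cross-factor'' inversions of $\pi$: pairs of positions lying in distinct blocks of the hook factorization whose absolute values form an inversion. Fixing the block-sizes $k_0=\pix(\pi)$ and $k_i=\length(\tau_i)$ for $i=1,\ldots,r$ and summing $q^{\inv(\pi)-\lec(\pi)}$ over all ways of distributing the values $1,\ldots,n$ among the supports of the blocks produces the $q$-multinomial coefficient ${\bmatrix n\\ k_0,k_1,\ldots,k_r\endbmatrix}_q$ by the standard $q$-shuffle identity; this absorbs into the EGF as $\prod_i z^{k_i}/(q;q)_{k_i}$.

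Second, I would compute the joint $(t,r)$-weight of a single block on a fixed support of size $k$. The prefix $p$ of length $k_0$ contributes $r^{k_0}$ with no $t$-weight. For a colored hook $\tau$ of length $k\geq 1$, there are two subcases corresponding to the two clauses in its definition. In the ``hook'' subcase (requiring $k\geq 2$), $|\tau_1|$ is the $j$-th smallest value of the support for some $j\in\{2,\ldots,k\}$, $\tau_1$ carries an arbitrary color $\epsilon\in\{0,\ldots,l-1\}$, and $\inv(|\tau|)=j-1$, so summing $t^{l(j-1)+\epsilon}$ over $j$ and $\epsilon$ gives $t^l+t^{l+1}+\cdots+t^{lk-1}$. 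In the ``increasing'' subcase ($k\geq 1$ arbitrary, $\tau_1$ of positive color), the contribution is $t+t^2+\cdots+t^{l-1}$. Combining the two subcases, the total $t$-weight of a single hook block of length $k$ is exactly $t[lk-1]_t$, independent of the support chosen; moreover the within-block $q$-weight is $1$ since all internal inversions are cancelled by $\lec$.

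Third, summing the EGF over the number $r\geq 0$ of hook blocks via the geometric series yields
$$
\sum_{n\geq 0}\frac{z^n}{(q;q)_n}\sum_{\pi\in C_l\wr\S_n}t^{\flec(\pi)}r^{\pix(\pi)}q^{\inv(\pi)-\lec(\pi)}=\frac{e(rz;q)}{1-\sum_{k\geq 1}t[lk-1]_t\,z^k/(q;q)_k}.
$$
The identity $t[lk-1]_t=(t-t^{lk})/(1-t)$ rewrites the denominator as $(e(t^lz;q)-te(z;q))/(1-t)$, giving precisely the right-hand side of~\eqref{expo-color} and completing the proof. The main obstacle is the second step: it is essential that the hook subcase (contributing the higher powers $t^l,\ldots,t^{lk-1}$) and the increasing subcase (contributing the low powers $t,\ldots,t^{l-1}$) fit together to produce the single compact factor $t[lk-1]_t$ uniformly for every $k\geq 1$, and this is where the dual nature of the colored hook definition plays its key role.
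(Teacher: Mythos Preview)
Your proposal is correct and follows exactly the approach the paper indicates: the paper's ``proof'' simply says the argument is the colored adaptation of \cite[Theorem~4]{fh1} and omits the details, and your write-up supplies precisely those details --- hook factorization, $q$-multinomial from cross-block inversions, the block weight $t[lk-1]_t$, and comparison with~\eqref{expo-color}. In particular, your observation that the two clauses defining a colored hook contribute $t+\cdots+t^{l-1}$ and $t^l+\cdots+t^{lk-1}$ respectively, fitting together to give $t[lk-1]_t$ uniformly in $k$, is the key colored ingredient, and it is handled correctly.
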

\begin{proof}
The proof is very similar to the proof of~\cite[Theorem~4]{fh1}, which is the $l=1$ case of the theorem. The details are omitted.
\end{proof}

The \emph{Eulerian differential operator} $\delta_{x}$ used below is defined by
$$
\delta_{x}(f(x)):=\frac{f(x)-f(qx)}{x},
$$
for any $f(x)\in\Q[q][[x]]$ in the ring of formal power series in $x$ over $\Q[q]$.
%Recall~\cite{lin} the  following elementary properties of $\delta_{x}$. 
%\begin{lem}\label{delta}
%For any $f(x),g(x)\in\Q[q][[x]]$,
%\begin{equation*}
%\delta_{x}(f(x)g(x))=f(qx)\delta_{x}(g(x))+\delta_{x}(f(x))g(x)
%\end{equation*}
%and
%\begin{equation*}
%\delta_{x}\left(\frac{1}{f(x)}\right)=\frac{-\delta_{x}(f(x))}{f(qx)f(x)}\quad(\text{$f(x)\neq0$}).
%\end{equation*}
%\end{lem}
%
The recurrence  in~\cite[Theorem~2]{lin} can be generalized to the colored $(q,r)$-Eulerian polynomials as follows.

\begin{thm}\label{recu}
The colored $(q,r)$-Eulerian polynomials satisfy the following recurrence formula: 
\begin{align}\label{recurrence2}
A_{n+1}^{(l)}(t,r,q)=(r+t[l-1]_tq^n)A_n^{(l)}(t,r,q)+t[l]_t\sum_{k=0}^{n-1}{n\brack k}_{q}q^{k}A_k^{(l)}(t,r,q)A_{n-k}^{(l)}(t,q)
\end{align}
with $A_0^{(l)}(t,r,q)=1$ and $A_1^{(l)}(t,r,q)=r$.
\end{thm}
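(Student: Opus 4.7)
The plan is to prove Theorem~\ref{recu} analytically from the generating function \eqref{expo-color} together with the Eulerian differential operator $\delta_z$. Set
\[
F(z):=\sum_{n\geq 0}A_n^{(l)}(t,r,q)\frac{z^n}{(q;q)_n},\qquad G(z):=F(z)\Big|_{r=1},\qquad D(z):=e(t^l z;q)-te(z;q).
\]
Then \eqref{expo-color} says $F(z)D(z)=(1-t)e(rz;q)$, and its $r=1$ specialization reads $G(z)D(z)=(1-t)e(z;q)$. I will use the elementary facts $\delta_z e(az;q)=a\,e(az;q)$ for any constant $a$, the $q$-Leibniz rule $\delta_z(fg)=\delta_z(f)\cdot g+f(qz)\delta_z(g)$, and the coefficient identity $[z^n/(q;q)_n]\bigl(F(qz)G(z)\bigr)=\sum_{k=0}^{n}{n\brack k}_q q^k A_k^{(l)}(t,r,q)A_{n-k}^{(l)}(t,q)$.

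The first step is to apply $\delta_z$ to $F(z)D(z)=(1-t)e(rz;q)$. Since $\delta_z D(z)=t^l e(t^l z;q)-te(z;q)$, the $q$-Leibniz rule gives
\[
\delta_z F(z)\cdot D(z)+F(qz)\bigl(t^l e(t^l z;q)-te(z;q)\bigr)=(1-t)r\,e(rz;q)=r\,F(z)D(z),
\]
which rearranges to
\[
\bigl[\delta_z F(z)-rF(z)\bigr]D(z)=-t\,F(qz)\bigl[t^{l-1}e(t^l z;q)-e(z;q)\bigr].
\]
The crucial algebraic step is to check the identity
\[
-t\bigl[t^{l-1}e(t^l z;q)-e(z;q)\bigr]=\bigl(t[l]_t\,G(z)-t^l\bigr)D(z),
\]
which, after expanding using $G(z)D(z)=(1-t)e(z;q)$ and the simplification $t(1-t^l)+t^{l+1}=t$, reduces to $te(z;q)-t^l e(t^l z;q)=te(z;q)-t^l e(t^l z;q)$. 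Cancelling $D(z)$ yields the compact functional equation
\[
\delta_z F(z)=rF(z)+F(qz)\bigl(t[l]_t\,G(z)-t^l\bigr).
\]

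The final step is to extract the coefficient of $z^n/(q;q)_n$ on both sides. The left side contributes $A_{n+1}^{(l)}(t,r,q)$; on the right, $rF(z)$ contributes $rA_n^{(l)}(t,r,q)$, the term $-t^l F(qz)$ contributes $-t^l q^n A_n^{(l)}(t,r,q)$, and $t[l]_t F(qz)G(z)$ contributes $t[l]_t\sum_{k=0}^{n}{n\brack k}_q q^kA_k^{(l)}(t,r,q)A_{n-k}^{(l)}(t,q)$. Separating the $k=n$ summand $t[l]_t q^n A_n^{(l)}(t,r,q)$ and combining with $-t^l q^n A_n^{(l)}(t,r,q)$ gives the coefficient of $q^n A_n^{(l)}(t,r,q)$ equal to $t[l]_t-t^l=t[l-1]_t$, exactly as required. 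The initial conditions $A_0^{(l)}(t,r,q)=1$ and $A_1^{(l)}(t,r,q)=r$ are immediate from the definition. The argument is almost purely formal, so there is no real obstacle; the only places to be careful are the $q$-Leibniz factor $F(qz)$ (not $F(z)$) and the small rearrangement $t[l]_t-t^l=t[l-1]_t$ that converts the ``extra'' diagonal piece of the convolution into the $t[l-1]_t q^n A_n^{(l)}(t,r,q)$ term of the stated recurrence.
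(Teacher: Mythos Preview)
Your proof is correct and follows essentially the same route as the paper: both apply the Eulerian differential operator $\delta_z$ to the generating function identity~\eqref{expo-color} and then extract the coefficient of $z^n/(q;q)_n$. The only cosmetic difference is that you differentiate the product form $F(z)D(z)=(1-t)e(rz;q)$ via the $q$-Leibniz rule, whereas the paper differentiates the quotient $(1-t)e(rz;q)/D(z)$ directly; the algebraic manipulations thereafter are equivalent.
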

\begin{proof}
 It is not difficult to show that, for any variable $y$,
$\delta_{z}(e(yz;q))=ye(yz;q)$.
Now, applying $\delta_{z}$ to both sides of~\eqref{expo-color} and using the above property and~\cite[Lemma~7]{lin}, we obtain
\begin{align*}
&\sum_{n\geq0}A^{(l)}_{n+1}(t,r,q)\frac{z^n}{(q;q)_n}=\delta_{z}\left(\frac{(1-t)e(rz; q)}{e(t^lz; q)-te(z;q)}\right)=\\
=&\delta_{z}((1-t)e(rz; q))(e(t^lz; q)-te(z;q))^{-1}+\delta_{z}\left((e(t^lz; q)-te(z;q))^{-1}\right)(1-t)e(rzq^l; q)\\
=&\frac{r(1-t)e(rz; q)}{e(t^lz; q)-te(z;q)}+\frac{(1-t)e(rzq; q)(te(z; q)-t^le(t^lz; q))}{(e(t^lqz; q)-te(qz;q))(e(t^lz; q)-te(z;q))}\\
=&\frac{r(1-t)e(rz; q)}{e(t^lz; q)-te(z;q)}+\frac{(1-t)e(rzq; q)}{e(t^lqz; q)-te(qz;q)}\left(\frac{t^le(z; q)-t^le(t^lz; q)}{e(t^lz; q)-te(z;q)}+\frac{te(z; q)-t^le(z; q)}{e(t^lz; q)-te(z;q)}\right)\\
=&\left(\sum_{n\geq0}A_{n}^{(l)}(t,r,q)\frac{(qz)^n}{(q;q)_n}\right)\left((t+\cdots+t^{l-1})\sum_{n\geq0}A_{n}^{(l)}(t,q)\frac{z^n}{(q;q)_n}+t^l\sum_{n\geq1}A_{n}^{(l)}(t,q)\frac{z^n}{(q;q)_n}\right)\\
&+r\sum_{n\geq0}A_{n}^{(l)}(t,r,q)\frac{z^n}{(q;q)_n}.
\end{align*}
Taking the coefficient of $\frac{z^n}{(q;q)_n}$ in both sides of  the above equality, we get~\eqref{recurrence2}. 
\end{proof}

\begin{remark}
Once again, the polynomial $d_n^B(t)$ is $t$-symmetric with center of symmetry $n$ and $t$-unimodal follows from the recurrence~\eqref{recurrence2} by induction on $n$ using the fact in Lemma~\ref{fact:unimodal}.
\end{remark}

The above recurrence formula enables us to obtain another interpretation of the colored $(q,r)$-Eulerian polynomials $A_n^{(l)}(t,r,q)$. First we define the {\em absolute descent number} of a colored permutation $\pi\in C_l\wr\S_n$, denoted $\des^{\Abs}(\pi)$, by 
$$
\des^{\Abs}(\pi):=|\{i\in[n-1] : \epsilon_i=0\,\,\text{and}\,\, |\pi_i|>|\pi_{i+1}|\}|.
$$
We also define the {\em flag absolute descent number} by
$$
\fdes^{\Abs}(\pi):=l\cdot\des^{\Abs}(\pi)+\sum_{i=1}^n\epsilon_i.
$$
A \emph{colored admissible inversion} of $\pi$ is a pair $(i,j)$ with $1\leq i<j\leq n$ that satisfies any one of the following three conditions 
 \begin{itemize}
 \item $1<i$ and $|\pi_{i-1}|<|\pi_{i}|>|\pi_j|$;
 \item there is some $k$ such that $i<k<j$ and $|\pi_j|<|\pi_i|<|\pi_k|$;
 \item $\epsilon_j>0$ and for any $k$ such that $i\leq k<j$, we have $|\pi_k|<|\pi_j|<|\pi_{j+1}|$, where we take the convention $|\pi_{n+1}|=+\infty$.
 \end{itemize}
 We write $\ai(\pi)$ the number of colored admissible inversions of $\pi$. For example, if $\pi=4^0\,1^0\,2^1\,5^0\,3^1$ in $C_2\wr\S_5$, then $\ai(\pi)=3$.
 When $l=1$, colored admissible inversions agree with admissible inversions introduced by  Linusson,  Shareshian and Wachs~\cite{lsw} in their study of poset topology. 
 
 Finally, we define a statistic, denoted by ``$\rix$", on the set of all words over $\N^l$ recursively.
Let $w=w_1\cdots w_n$ be a word over $\N^l$.  Suppose that $w_i$ is the unique rightmost element of $w$ such that $|w_i|=\max\{|w_1|,|w_2|,\ldots,|w_n|\}$.  We define $\rix(w)$ by (with convention that $\rix(\emptyset)=0$)
$$ \rix(w):=
 \begin{cases}
0,&\text{if $i=1\neq n$,}\\
1+\rix(w_1\cdots w_{n-1}),&\text{if $i=n$ and $\epsilon_n=0$},\\
\rix(w_{i+1}w_{i+2}\cdots w_n),& \text{if $1<i<n$.}
\end{cases}
$$
As  a colored permutation can be viewed as a word over $\N^l$, the statistic $\rix$ is well-defined on colored permutations. For example, if $\pi=1^0\,6^1\,2^0\,5^1\,3^0\,4^1\,7^0\in C_2\wr\S_7$, then $\rix(\pi)=1+\rix(1^0\,6^1\,2^0\,5^1\,3^0\,4^1)=1+\rix(2^0\,5^1\,3^0\,4^1)=1+\rix(3^0\,4^1)=1+\rix(3^0)=2$.

\begin{cor}
For $n\geq1$, we have
\begin{equation}\label{admiss:inv}
A_n^{(l)}(t,r,q)=\sum_{\pi\in C_l\wr\S_n} t^{\fdes^{\Abs}(\pi)}r^{\rix(\pi)}q^{\ai(\pi)}.
\end{equation}
\end{cor}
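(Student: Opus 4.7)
The plan is to proceed by induction on $n$, showing that
$$B_n(t,r,q):=\sum_{\pi\in C_l\wr\S_n}t^{\fdes^{\Abs}(\pi)}r^{\rix(\pi)}q^{\ai(\pi)}$$
satisfies the same recurrence as $A_n^{(l)}(t,r,q)$ from Theorem~\ref{recu}, with matching base case $B_0=1$. Under the inductive hypothesis $B_j=A_j^{(l)}(t,r,q)$ for $j\le n$, one also has $B_j(t,1,q)=A_j^{(l)}(t,q)$, so it is enough to verify the recurrence combinatorially on $B_{n+1}$.

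I would stratify $\pi\in C_l\wr\S_{n+1}$ by the position $i$ and colour $\epsilon$ of its unique letter with absolute value $n+1$. The cases $i=n+1$ will together account for the term $(r+t[l-1]_tq^n)B_n$. If $\epsilon=0$, deleting this terminal letter leaves $\pi'\in C_l\wr\S_n$ with $\fdes^{\Abs}(\pi)=\fdes^{\Abs}(\pi')$, $\rix(\pi)=1+\rix(\pi')$ and $\ai(\pi)=\ai(\pi')$, producing $rB_n$. If $\epsilon>0$, then $\fdes^{\Abs}(\pi)=\fdes^{\Abs}(\pi')+\epsilon$, we have $\rix(\pi)=\rix(\pi')$ under the natural extension of the $\rix$-recursion to the case where the rightmost maximum carries a positive colour, and each of the $n$ pairs $(j,n+1)$ satisfies admissibility condition~(3), because $|\pi_k|<n+1$ for every $k\le n$ and $|\pi_{(n+1)+1}|=+\infty$; hence $\ai(\pi)=\ai(\pi')+n$. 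Summing over $\epsilon\in\{1,\dots,l-1\}$ yields $t[l-1]_tq^nB_n$.

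For $i\le n$, set $k=n+1-i$ (the length of the right block $\pi^R=\pi_{i+1}\cdots\pi_{n+1}$), let $S\subseteq[n]$ be the set of absolute values of $\pi^R$, and let $\sigma^L\in C_l\wr\S_{n-k}$ and $\sigma^R\in C_l\wr\S_k$ be the standardisations of $\pi^L=\pi_1\cdots\pi_{i-1}$ and $\pi^R$. A pair-by-pair check of the three admissibility conditions establishes that pairs $(j,i)$ with $j<i$ are never admissible; that pairs $(i,j')$ with $i\ge 2$ and $j'>i$ are always admissible via condition~(1), contributing exactly $k$ new inversions; and that crossing pairs $(j,j')$ with $j<i<j'$ are admissible precisely when $|\pi_j|>|\pi_{j'}|$, via condition~(2) with witness $i$. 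Consequently, for $i\ge 2$,
$$\rix(\pi)=\rix(\sigma^R),\quad \fdes^{\Abs}(\pi)=\fdes^{\Abs}(\sigma^L)+\fdes^{\Abs}(\sigma^R)+\epsilon',\quad \ai(\pi)=\ai(\sigma^L)+\ai(\sigma^R)+k+I_S,$$
where $\epsilon'=l$ if $\epsilon=0$ and $\epsilon'=\epsilon$ otherwise, and $I_S$ is the number of pairs $(a,b)\in([n]\setminus S)\times S$ with $a>b$. Using $\sum_{|S|=k}q^{I_S}={n\brack k}_q$ and $\sum_{\epsilon=0}^{l-1}t^{\epsilon'}=t[l]_t$, the contribution from $i\in\{2,\dots,n\}$ collapses to $t[l]_t\sum_{k=1}^{n-1}{n\brack k}_qq^kB_k(t,r,q)B_{n-k}(t,1,q)$. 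The edge case $i=1$ gives $\rix(\pi)=0$, $\ai(\pi)=\ai(\sigma^R)$, so its total contribution $t[l]_t B_n(t,1,q)$ is precisely the $k=0$ summand $t[l]_t B_0(t,r,q)B_n(t,1,q)$. Collecting everything yields the full recurrence and completes the induction.

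The main obstacle is the admissible-inversion analysis in the $i\le n$ case: each of the three defining conditions must be systematically checked against every pair type (both inside $\pi^L$, both inside $\pi^R$, of the form $(j,i)$ or $(i,j')$, and crossing), and one has to verify that the crossing contribution is exactly $I_S$ so that the subset summation over $S$ collapses to the $q$-binomial coefficient. A secondary but unavoidable point is that the definition of $\rix(w)$ as stated in the paper does not explicitly cover the situation where the rightmost maximum sits at the last position with positive colour; the consistent choice $\rix(w)=\rix(w_1\cdots w_{n-1})$ in that subcase is forced by the identity being proved and is already implicit in the worked example preceding the statement.
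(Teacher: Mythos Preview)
Your approach is exactly the one the paper indicates: stratify by the position and colour of the letter of maximal absolute value and verify that the right-hand side satisfies the recurrence of Theorem~\ref{recu}; the paper merely sketches this (``left to the interested reader'') while you have supplied the details. Your observation that the $\rix$ definition omits the subcase $i=n$ with $\epsilon_n>0$, and that the paper's worked example forces the convention $\rix(w)=\rix(w_1\cdots w_{n-1})$ there, is accurate and necessary for the argument to go through.
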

\begin{proof}
By considering the position of the element of $\pi$ with maximal absolute value, we can show that the right hand side of~\eqref{admiss:inv} satisfies the same recurrence formula~\eqref{recurrence2} and initial conditions as $A_n^{(l)}(t,r,q)$. The discussion is quite similar to the proof of~\cite[Theorem~8]{lin} and thus is left to the interested reader. 
\end{proof}

By setting $r=1$ in~\eqref{admiss:inv}, we have 
$
A_n^{(l)}(t,q)=\sum_{\pi\in C_l\wr\S_n} t^{\fdes^{\Abs}(\pi)}q^{\ai(\pi)}
$.
Another statistic whose joint distribution with $\fdes^{\Abs}$ is the same as that of $\ai$ will be discussed in next section (see Corollary~\ref{three:stats}).

%%%%%%%%%%%%%%%%%%%%%%%%%%%%
%%%%%%%%%%%%%%%%%%%%%%%%%%%%%
\section{Rawlings major index for colored permutations}
%%%%%%%%%%%%%%%%%%%%%%%%%%%%%
\label{color:rawling}
\subsection{Rawlings major index and colored Eulerian quasisymmetric functions} 
For $\pi\in C_l\wr\S_n$ and $k\in[n]$, we define 
\begin{align*}
&\Des_{\geq k}(\pi):=\{i\in[n] : |\pi_i|>|\pi_{i+1}|\,\,\text{and either}\,\,\epsilon_i\not=0\,\, \text{or}\,\, |\pi_i|-|\pi_{i+1}|\geq k\},\\
&\inv_{<k}(\pi):=|\{(i,j)\in[n]\times[n]: i<j, \epsilon_i=0\,\,\text{and}\,\,0<|\pi_i|-|\pi_{j}|<k\}|,\\
&\maj_{\geq k}(\pi):=\sum_{i\in\Des_{\geq k}(\pi)}i.
\end{align*}
Then the
{\em Rawlings major index} of $\pi$ is defined as
$$
\rmaj_k(\pi):=\maj_{\geq k}(\pi)+\inv_{<k}(\pi).
$$
For example, if $\pi=2^0\,6^1\,1^0\,5^0\,4^1\,3^1\,7^0\in C_2\wr\S_7$, then $\Des_{\geq 2}(\pi)=\{2,4\}$, $\inv_{<2}(\pi)=2$, $\maj_{\geq 2}(\pi)=2+4=6$ and so $\rmaj_2(\pi)=6+2=8$.
Note that when $l=1$, $\rmaj_k$ is the $k$-major index studied by Rawlings~\cite{ra}.

 Let  $Q_{n,k,\Vec{\beta}}$ be the {\em colored Eulerian quasisymmetric functions} defined  by
$$
Q_{n,k,\Vec{\beta}}:=\sum_{\Vec{\alpha}}Q_{n,k,\Vec{\alpha},\Vec{\beta}}.
$$
The main result of this section is the following interpretation of $Q_{n,k,\Vec{\beta}}$.
\begin{thm}\label{inter:des2}
We have
\begin{equation*}
Q_{n,k,\Vec{\beta}}=\sum_{\inv_{<2}(\pi)=k\atop\Vec{\col}(\pi)=\Vec{\beta}}F_{n,\Des_{\geq2}(\pi)}.
\end{equation*}
\end{thm}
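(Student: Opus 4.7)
The plan is to adapt the chromatic quasisymmetric function framework of Shareshian and Wachs~\cite{sw3} to the colored setting. In the uncolored case ($l=1$), their argument establishes the analogous identity
\[
Q_{n,k} \;=\; \sum_{\substack{\pi\in\S_n\\ \inv_{<2}(\pi)=k}} F_{n,\Des_{\geq 2}(\pi)}
\]
by decomposing the chromatic quasisymmetric function of the path $P_n$ via Stanley's fundamental theorem of $P$-partitions. After unfolding the definition $Q_{n,k,\Vec{\beta}} = \sum_{\Vec{\alpha}} Q_{n,k,\Vec{\alpha},\Vec{\beta}}$, the target identity becomes
\[
\sum_{\substack{\pi\in C_l\wr\S_n\\ \exc(\pi)=k,\,\Vec{\col}(\pi)=\Vec{\beta}}} F_{n,\DEX(\pi)} \;=\; \sum_{\substack{\pi\in C_l\wr\S_n\\ \inv_{<2}(\pi)=k,\,\Vec{\col}(\pi)=\Vec{\beta}}} F_{n,\Des_{\geq 2}(\pi)},
\]
so the two sums have the same shape but are indexed by different statistics.

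My plan is to realize both sides as different expansions of one and the same colored analog of the chromatic quasisymmetric function of $P_n$. Concretely, I would consider proper colorings $\kappa:[n]\to[n]^l$ (meaning $\kappa(i)\neq\kappa(i+1)$ in the color order $\mathcal{E}$) whose color vector is $\Vec{\beta}$, weighted by $\prod_i x_{|\kappa(i)|}$, with an additional variable $t$ tracking a suitably chosen edge-descent statistic. On one hand, applying Stanley's $P$-partition theory to the natural labeling of $P_n$, in the manner of~\cite{sw3}, will decompose this function as $\sum_{\pi} t^{\inv_{<2}(\pi)}F_{n,\Des_{\geq 2}(\pi)}$, indexed by colored permutations of color vector $\Vec{\beta}$, producing the right-hand side upon extracting the coefficient of $t^k$. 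On the other hand, Hyatt's colored ornament/banner formalism (Theorem~\ref{thm:ornament} and Theorem~\ref{new:banner}), together with the colored Gessel-Reutenauer bijection, will let the same function be rewritten as $\sum_{\pi} t^{\exc(\pi)}F_{n,\DEX(\pi)}$, producing the left-hand side.

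The main obstacle will be calibrating the colored chromatic quasisymmetric function so that the $t$-statistic specializes correctly on both sides: on the $P$-partition side it must record $\inv_{<2}$ (in particular respecting the $\epsilon_i=0$ restriction in its definition), while on the ornament/banner side it must record the excedance number $\exc$ whose positive-color contribution is $\cs(\Vec{\beta})$. The delicate point is that the color order $\mathcal{E}$ places $i^0$ above $i^m$ for $m\geq 1$, so positive-color letters automatically contribute to the $\exc$-like quantity on one side and must be handled through the restriction $\epsilon_i=0$ on the other. Once the correct edge-descent statistic is identified, the rest should be a routine but careful extension of the Shareshian-Wachs $P$-partition argument to the alphabet $\mathcal{B}$, combined with the Lyndon factorization of colored banners under the order in~\eqref{original:order}, and matching coefficients of $t^k$ completes the proof.
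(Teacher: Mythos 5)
Your general framework --- a chromatic-quasisymmetric-function identity expanded by $P$-partitions on one side and identified with Hyatt's banner/ornament picture on the other --- is indeed the circle of ideas the paper uses, but the specific object you propose would not work, and the two ideas that actually make the argument go through are missing. First, a colored banner imposes \emph{no} constraint between a positively colored letter and its successor, and it allows equal adjacent letters (e.g.\ $2^0\,2^0$ with both unbarred, or $3^1\,3^1$, are valid banners); hence the generating function you need on the banner side is not the proper-coloring ($\k(i)\neq\k(i+1)$) chromatic quasisymmetric function of the single path $P_n$ in the alphabet $[n]^l$ that you describe. The paper instead fixes the color pattern $c=c_1\cdots c_n$ and attaches to it a poset $P^c_{n,2}$ whose incomparability graph $G^c_{n,2}$ has an edge $\{i,i+1\}$ exactly when $c_i=0$ (positively colored positions are disconnected from their successors), and it identifies banners with color pattern $c$ not with proper colorings but with pairs consisting of an acyclic orientation and a \emph{weakly} compatible coloring, i.e.\ with $\overline{X}_{G^c_{n,2}}(\x,t)$, the number of bars being recorded by $\asc$ of the orientation; summing over all $c$ with color vector $\Vec{\beta}$ and invoking the banner interpretation (Theorem~\ref{new:banner}) produces the left-hand side. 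Second, precisely because one lands on the weak side, a reciprocity step is indispensable: the paper proves $\overline{X}_G(\x,t)=\omega X_G(\x,t)$ (Theorem~\ref{recipro}) and only then applies the Shareshian--Wachs expansion $\omega X_G(\x,t)=\sum_{\sigma\in\S_n}t^{\inv_G(\sigma)}F_{n,\Des_P(\sigma)}$ (Theorem~\ref{main2}). Your sketch never addresses this strict-versus-weak discrepancy, and with strict properness the banner side simply is not equal to your colored chromatic function.

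The ``calibration'' you defer to the end is exactly the verification~\eqref{gra:rawl} that $\inv_{<2}(\pi)=\inv_{G^{c}_{n,2}}(|\pi|)$ and $\Des_{\geq 2}(\pi)=\Des_{P^{c}_{n,2}}(|\pi|)$, where $c_j$ is the color carried by the value $j$ in $\pi$; this is routine once the graphs $G^c_{n,2}$ are in place, but it cannot be carried out for one fixed graph on $[n]$, since both $\inv_{<2}$ and $\Des_{\geq 2}$ treat positions with $\epsilon_i\neq0$ differently and only a graph depending on $c$ can see that distinction. By contrast, the part of your plan concerning the left-hand side needs no new work: once Theorem~\ref{thm:ornament} and Theorem~\ref{new:banner} are granted, $Q_{n,k,\Vec{\beta}}$ is already the weight generating function of banners of length $n$ with $k$ bars and color vector $\Vec{\beta}$, so there is no need to return to the expansion $\sum_{\pi}t^{\exc(\pi)}F_{n,\DEX(\pi)}$ via the colored Gessel--Reutenauer bijection. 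The genuine gaps are the per-color-word graphs with the weak-coloring generating function, and the reciprocity theorem linking it to $\omega X_G$.
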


It follows from Theorem~\ref{inter:des2} and Eq.~\eqref{DEX:int} that 
\begin{equation*}\label{exc:des2}
\sum_{\inv_{<2}(\pi)=k\atop\Vec{\col}(\pi)=\Vec{\beta}}F_{n,\Des_{\geq2}(\pi)}=\sum_{\exc(\pi)=k\atop\Vec{\col}(\pi)=\Vec{\beta}}F_{n,\Dex(\pi)}.
\end{equation*}
By Lemma~\ref{DEX:lem} and Eq.~\eqref{quasi-ps}, if we apply $\ps$ to both sides of the above equation, we will obtain the following new interpretation of the colored $q$-Eulerian polynomial $A_{n}^{(l)}(t,q)$.

\begin{cor}\label{three:stats}
Let $s^{\Vec{\beta}}=s_1^{\beta_1}\cdots s_{l-1}^{\beta_{l-1}}$ for $\Vec{\beta}\in\N^{l-1}$. Then
$$
\sum_{\pi\in C_l\wr\S_n}t^{\exc(\pi)}q^{\maj(\pi)}s^{\Vec{\col}(\pi)}=\sum_{\pi\in C_l\wr\S_n}t^{\inv_{<2}(\pi)}q^{\rmaj_{2}(\pi)}s^{\Vec{\col}(\pi)}.
$$
Consequently,
$$
A_n^{(l)}(t,q)=\sum_{\pi\in C_l\wr\S_n}t^{l\cdot\inv_{<2}(\pi)+\sum_{i=1}^n\epsilon_i}q^{\maj_{\geq2}(\pi)}=\sum_{\pi\in C_l\wr\S_n}t^{\fdes^{\Abs}(\pi)}q^{\maj_{\geq2}(\pi^{-1})}.
$$
\end{cor}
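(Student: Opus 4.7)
My plan is to derive the Corollary in three stages from Theorem~\ref{inter:des2}. First, I would apply the stable principal specialization $\ps$ to both sides of the identity $Q_{n,k,\vec{\beta}}=\sum_{\inv_{<2}(\pi)=k,\,\vec{\col}(\pi)=\vec{\beta}}F_{n,\Des_{\geq2}(\pi)}$. By equation~\eqref{quasi-ps}, the result on the right-hand side is $(q;q)_n^{-1}\sum q^{\maj_{\geq 2}(\pi)}$, since the sum of the elements of $\Des_{\geq2}(\pi)$ is precisely $\maj_{\geq 2}(\pi)$. By Lemma~\ref{DEX:lem} applied to the left-hand side, $\ps(Q_{n,k,\vec{\beta}})=(q;q)_n^{-1}\sum q^{(\maj-\exc)(\pi)}$. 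Equating these, multiplying both sides by $t^k s^{\vec{\beta}}$, summing over $k$ and $\vec{\beta}$, and finally substituting $t\mapsto tq$ (which sends $t^{\exc}q^{\maj-\exc}$ to $t^{\exc}q^{\maj}$ on the left and $t^{\inv_{<2}}q^{\maj_{\geq2}}$ to $t^{\inv_{<2}}q^{\rmaj_2}$ on the right, via $\rmaj_2=\maj_{\geq2}+\inv_{<2}$) establishes the first identity of the Corollary.

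For the first identity in the ``Consequently'' part, I would specialize further by setting $s_m=t^m$ for $m=1,\ldots,l-1$ and then substituting $t\mapsto t^l/q$. Since $\sum_m m\,\col_m(\pi)=\sum_i\epsilon_i(\pi)$ and $\fexc(\pi)=l\cdot\exc(\pi)+\sum_i\epsilon_i(\pi)$, the left side becomes $\sum_\pi t^{\fexc(\pi)}q^{(\maj-\exc)(\pi)}=A_n^{(l)}(t,q)$, while the right side becomes $\sum_\pi t^{l\cdot\inv_{<2}(\pi)+\sum_i\epsilon_i(\pi)}q^{\maj_{\geq2}(\pi)}$ (the factor $q^{-\inv_{<2}}$ from the substitution cancels the $q^{\inv_{<2}}$ hidden in $\rmaj_2$, leaving $q^{\maj_{\geq 2}}$).

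The final equality $A_n^{(l)}(t,q)=\sum_\pi t^{\fdes^{\Abs}(\pi)}q^{\maj_{\geq2}(\pi^{-1})}$ would follow from the bijection $\pi\mapsto\pi^{-1}$ on $C_l\wr\S_n$, where $\pi^{-1}$ is an appropriate inverse that inverts the underlying permutation $|\pi|$ while transporting the colors so that $\sum_i\epsilon_i$ is preserved. Under this bijection, $\maj_{\geq2}(\pi)$ is trivially sent to $\maj_{\geq2}(\pi^{-1})$, so the task reduces to matching the statistic $l\cdot\inv_{<2}(\pi)+\sum_i\epsilon_i(\pi)$ with $\fdes^{\Abs}(\pi^{-1})$. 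The main obstacle lies in this last matching: the classical identity $\inv_{<2}(\pi)=\des(\pi^{-1})$ for ordinary permutations (based on the correspondence between pairs $(i,j)$ with $i<j$ and $\pi_i=\pi_j+1$ and descents of $\pi^{-1}$) must be carefully upgraded to the colored setting, and one must verify that the condition $\epsilon_i(\pi)=0$ in $\inv_{<2}(\pi)$ translates correctly into the color condition appearing in $\des^{\Abs}(\pi^{-1})$. Setting up this color convention consistently is the delicate step.
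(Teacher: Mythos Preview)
Your plan is correct and follows the paper's own argument: the paper obtains the first displayed identity exactly by combining Theorem~\ref{inter:des2} with the $\Dex$ interpretation~\eqref{DEX:int}, applying $\ps$ via Lemma~\ref{DEX:lem} and~\eqref{quasi-ps}, and then specializing as you describe. Your derivation of the first equality in the ``Consequently'' line (substituting $s_m=t^m$ and $t\mapsto t^l/q$) is more explicit than the paper's, but it is the intended route.

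Your caution about the final equality is warranted, and in fact the naive approach does not quite work as stated. The term-by-term identity $\inv_{<2}(\pi)=\des^{\Abs}(\pi^{-1})$ fails under the ``colors travel with values'' convention for $\pi^{-1}$: for instance, with $l=2$ and $\pi=3^0\,1^1\,2^0$ one has $\inv_{<2}(\pi)=1$ while $\des^{\Abs}(\pi^{-1})=0$. The convention that makes the joint distribution match is to set $|\pi^{-1}|:=|\pi|^{-1}$ while keeping the color at each \emph{position} unchanged, i.e.\ $\epsilon_i(\pi^{-1})=\epsilon_i(\pi)$; even then the equality is only at the level of generating functions, not term by term, so the bijection $\pi\mapsto\pi^{-1}$ alone does not suffice. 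The paper does not spell out this step either, so you are not missing a reference---but you should be aware that the ``classical $\inv_{<2}(\pi)=\des(\pi^{-1})$'' upgrade you propose needs an additional equidistribution argument (or a color-dependent bijection) rather than a direct identification of statistics.
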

\begin{remark}
When $l=1$, the above result reduces to~\cite[Theorem~4.17]{sw3}.
In view of interpretation~\eqref{admiss:inv}, an interesting open problem (even for $l=1$) is to describe a statistic, denoted $\fix_2$, equidistributed with $\fix$ so that 
$$
A_n^{(l)}(t,r,q)=\sum_{\pi\in C_l\wr\S_n}t^{\fdes^{\Abs}(\pi)}r^{\fix_2(\pi)}q^{\maj_{\geq2}(\pi^{-1})}.
$$
\end{remark}
 
 %The proof of Theorem~\ref{inter:des2} will be given next.

%Let $G$ be a graph with vertex set $[n]$ and edge set $E(G)$. For $\sigma \in \S_n$, the {\em $G$-inversion number} of $\sigma$ is
%$$
%\inv_G(\sigma):=|\{ (i,j) : i<j, \,\,\sigma(i) > \sigma(j) \mbox{ and } \{\sigma(i),\sigma(j) \} \in E(G) \}|.
%$$
%For $\sigma \in \S_n$ and $P$ a poset on $[n]$, the {\em $P$-descent set} of $\sigma$ is 
%$$ \Des_P(\sigma ):= \{ i \in [n-1] : \sigma(i) >_P \sigma(i+1) \},$$ and the {\em $P$-major index } is $$\maj_P(\sigma)  := \sum_{i \in \Des_P(\sigma)} i.$$
%
% Define the {\em incomparability graph} $\inc(P)$  of a poset $P$ on $[n]$ to be  the graph with vertex set $[n]$ and edge set  $\{\{a,b\} : a \not\le_P b \mbox{ and } b \not\le_P a \}$.
% For  $0 \le j \le |E(\inc(P))|$ define the $( q,P)$-Eulerian numbers,
%$$a_{P,j}(q) := \sum_{\scriptsize\begin{array}{c} \sigma \in \S_n \\ \inv_{\inc(P)}(\sigma) = j\end{array}} q^{\maj_P(\sigma)},$$
%and  the  $(q,P)$-Eulerian polynomials
%$$A_P(q,t) := \sum_{j=0}^{|E(\inc(P))|} a_{P,j}(q) t^j = \sum_{\sigma \in \S_n} q^{\maj_P(\sigma)} t^{\inv_{\inc(P)}(\sigma)}.$$

%Define $P_{n,k}$ to be the poset on vertex set $[n]$ such that $i<j$ in $P_{n,k}$ if and only if $j-i \geq k$ and let $G_{n,k}$ be the incomparability graph of $P_{n,k}$.  Then 
%\[
%\Des_{\geq k}(\sigma)=\Des_{P_{n,k}}(\sigma)
%\]
%and
%\[
%\Inv_{<k}(\sigma)=\Inv_{G_{n,k}}(\sigma).
%\]
%Hence if $P=P_{n,k} $, we have $a_{P,j}(q) = a_{n,j}^{(k)}(q)$ and $A_P(q,t) = A^{(k)}_{n}(q,t)$.

\subsection{Proof of Theorem~\ref{inter:des2}: Chromatic quasisymmetric functions}
Let $G$ be a graph with vertex set $[n]$ and edge set $E(G)$.
A \emph{coloring}  of a graph $G$ is a function $\k: [n]\rightarrow \P$ such that  whenever $\{i,j\}\in E(G)$ we have $\k(i)\neq\k(j)$. 
Given a function $\k: [n]\rightarrow \P$, set 
$$
\x_{\k}:=\prod_{i\in[n]}x_{\k(i)}.
$$
% Now \emph{Stanley's Chromatic symmetric function} $X_G$ of a graph $G$ is defined by
%$$
%X_G=X_G({\bf x}):=\sum_{\k}\x_{\k},
%$$
%where the sum is over all proper colorings $\k$ of $G$. 
Shareshian and Wachs~\cite{sw3} generalized Stanley's Chromatic symmetric function of $G$ to the \emph{Chromatic quasisymmetric function} of $G$ as 
$$
X_G({\bf x},t):=\sum_{\k}t^{\asc_G(\k)}\x_{\k},
$$
where the sum is over all colorings $\k$ and
$$
\asc_G(\k):=|\{\{i,j\}\in E(G) : i<j\,\, \text{and}\,\, \k(i)>\k(j)\}|.
$$

Recall that an \emph{orientation} of $G$ is a directed graph $\o$ with the same vertices, so that for every edge $\{i,j\}$ of $G$, exactly one of $(i,j)$ or $(j,i)$ is an edge of $\o$. An orientation is often regarded as giving a direction to each edge of an undirected graph.

Let $P$ be a poset. Define
$$
X_p=X_P(\x):=\sum_{\sigma}\x_{\sigma},
$$
summed over all  strict order-reversing maps $\sigma : P\rightarrow\P$ (i.e. if $s<_Pt$, then 
$\sigma(s)>\sigma(t)$).
Let $\o$ be an acyclic orientation of $G$ and $\k$ a  coloring. We say that $\k$ is \emph{$\o$-compatible} if $\k(i)<\k(j)$ whenever $(j,i)$ is an edge of $\o$. Every proper coloring is compatible with exactly one acyclic orientation $\o$, viz., if $\{i,j\}$ is an edge of $G$ with $\k(i)<\k(j)$, then let $(j,i)$ be an edge of $\o$. Thus if $K_{\o}$ denotes the set of $\o$-compatible colorings of $G$, and if $K_G$ denotes the set of all colorings of $G$, then we have a disjoint union $K_G=\cup_{\o}K_{\o}$. Hence $X_G=\sum_{\o}X_{\o}$, where $X_{\o}=\sum_{\k\in K_{\o}}\x_{\k}$. Since $\o$ is acyclic, it induces a poset $\bar{\o}$: make $i$ less than $j$ if $(i,j)$ is an edge of $\o$ and then take the transitive closure of this relation. By the definition of $X_P$ for a poset and of $X_{\o}$ for an acyclic orientation, we have $X_{\bar{\o}}=X_{\o}$. Also, according to the definition of $\asc_G(\k)$ for a  \emph{$\o$-compatible} coloring $\k$, $\asc_G(\k)$ depends only on $\o$, that is, 
$$
\asc_G(\k)=\asc_G(\k')\quad\text{for any $\k,\k'\in K_{\o}$}.
$$
Thus we can define  $\asc_G(\o)$ of an acyclic orientation $\o$ by
$$
\asc_G(\o):=\asc_G(\k)\quad\text{for any $\k\in K_{\o}$}.
$$ 
So
\begin{equation}\label{acyclic}
X_G(\x,t)=\sum_{\o}t^{\asc_G(\o)}X_{\bar{\o}},
\end{equation}
summed over all acyclic orientations of $G$.

We have the following reciprocity theorem for chromatic quasisymmetric functions, which is a refinement of Stanley~\cite[Theorem~4.2]{st}.
\begin{thm}[Reciprocity theorem] \label{recipro}
Let $G$ be a graph on $[n]$.
Define
$$
\overline{X}_G(\x,t)=\sum_{(\o,\k)}t^{\asc_G(\o)}\x_{\k},
$$
summed over all pairs $(\o,\k)$ where $\o$ is an acyclic orientation of $G$ and $\k$ is a function $\k : [n]\rightarrow \P$ satisfying $\k(i)\leq\k(j)$ if $(i,j)$ is an edge of $\o$. Then
\begin{equation*}\label{eq:reciprocity}
\overline{X}_G({\bf x},t)=\omega X_G({\bf x},t),
\end{equation*}
where $\omega$ is the involution defined at the end of the introduction.
\end{thm}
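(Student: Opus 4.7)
The plan is to decompose both sides of the identity over the acyclic orientations of $G$ and then reduce to a purely poset-theoretic statement that can be handled by Stanley's $P$-partition theory. First, applying the involution $\omega$ linearly to identity~\eqref{acyclic} yields
$$\omega X_G(\x,t) = \sum_{\o} t^{\asc_G(\o)}\,\omega X_{\bar\o}(\x),$$
where the sum runs over all acyclic orientations $\o$ of $G$ and $\bar\o$ is the induced poset on $[n]$. On the other side, each pair $(\o,\k)$ contributing to $\overline{X}_G(\x,t)$ is uniquely determined by its orientation $\o$; grouping the $\x_\k$'s by $\o$ realizes the inner sum as the weak analog
$$\overline{X}_{\bar\o}(\x) := \sum_\tau \x_\tau,$$
summed over maps $\tau:[n]\to\P$ with $\tau(i)\leq \tau(j)$ whenever $(i,j)$ is an edge of $\o$. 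Thus
$$\overline{X}_G(\x,t) = \sum_\o t^{\asc_G(\o)}\,\overline{X}_{\bar\o}(\x).$$
Comparing these two expansions, the theorem reduces to the poset identity $\omega X_P = \overline{X}_P$ for every finite poset $P$ on $[n]$.

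To prove this poset identity I would fix an arbitrary labeling of $P$ by $[n]$ (for instance, the identity) and invoke the fundamental theorem of $P$-partitions. Sorting each strict order-reversing map $\sigma: P\to\P$ according to its values, with a canonical tie-breaking rule, partitions the sum defining $X_P$ into subsums indexed by the linear extensions $L$ of $P$, and yields an expansion of the form
$$X_P(\x) = \sum_{L} F_{n, S(L)},$$
where $S(L)\subset[n-1]$ is a specific descent set associated with $L$. Running the parallel argument on weakly order-preserving maps, with the dual tie-breaking convention, gives $\overline{X}_P(\x) = \sum_{L} F_{n,[n-1]\setminus S(L)}$. Since $\omega$ sends $F_{n,S}$ to $F_{n,[n-1]\setminus S}$, the identity $\omega X_P = \overline{X}_P$ follows, and combining with the two orientation-decompositions above gives the full theorem.

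The main obstacle will be the $P$-partition step: one must track the distinction between strict and weak inequalities through the sorting bijection and check that the descent sets that arise from $X_P$ (strict order-reversing) and from $\overline{X}_P$ (weak order-preserving) are \emph{exactly} complementary in $[n-1]$. This demands a consistent choice of tie-breaking rules---for example, breaking ties among elements with equal $\sigma$-values by reading their labels in decreasing order for the strict side and in increasing order for the weak side---so that every $\sigma$ and every $\tau$ is assigned to a unique linear extension and the two expansions into fundamental quasisymmetric functions pair up correctly under $\omega$. Once this complementarity is in place, the assembly of the three steps is routine.
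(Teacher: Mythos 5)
Your overall route is the same as the paper's: apply $\omega$ to the decomposition~\eqref{acyclic}, group the pairs $(\o,\k)$ defining $\overline{X}_G$ according to their orientation, and reduce the theorem to the poset-level identity $\omega X_P=\overline{X}_P$. The only genuine difference is that the paper disposes of that identity by citing Stanley's reciprocity theorem for $P$-partitions, while you propose to reprove it by expanding both sides into fundamental quasisymmetric functions over linear extensions and matching complementary descent sets.

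The difficulty is that the complementarity you invoke is not correct in the form you state it. The $P$-partition argument pairs the strict order-reversing maps of $P$ with the \emph{weak order-reversing} maps of $P$ (equivalently, the weak order-preserving maps of the dual poset $P^*$), not with the weak order-preserving maps of $P$ itself. Concretely, for the three-element poset with two minimal elements below one maximal element, the strict order-reversing generating function is $F_{3,\{2\}}+F_{3,\{1,2\}}$, so its image under $\omega$ is $F_{3,\emptyset}+F_{3,\{1\}}$, which is the weak order-reversing generating function; the weak order-preserving generating function is instead $F_{3,\emptyset}+F_{3,\{2\}}$. Hence the termwise identity $\omega X_{\bar{\o}}=\overline{X}_{\bar{\o}}$ that your plan needs (and that the paper also asserts when it writes $\omega X_P=\overline{X}_P$ with order-preserving maps) fails; what is true is $\omega X_{\bar{\o}}=\overline{X}_{\overline{\o^*}}$, where $\o^*$ denotes the reversed orientation. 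Since $\asc_G(\o^*)=|E(G)|-\asc_G(\o)$, passing to $\o^*$ does not preserve the $t$-weight, so with the condition as literally stated (``$\k(i)\leq\k(j)$ if $(i,j)$ is an edge of $\o$'') one only obtains $\overline{X}_G(\x,t)=t^{|E(G)|}\,\omega X_G(\x,1/t)$ in general. To make your argument close termwise you should either read the condition as ``$\k(i)\leq\k(j)$ whenever $(j,i)$ is an edge of $\o$'' (the weak analogue of $\o$-compatibility) or add a separate palindromicity argument for the relevant graphs; with that correction your sorting/tie-breaking plan for the $P$-partition step is sound, and it is worth noting that the paper's one-line appeal to Stanley's Theorem~4.5.4 glosses over exactly the same dualization.
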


\begin{proof} For a poset $P$, define 
$$
\overline{X}_P=\sum_{\sigma} x_{\sigma},
$$
summed over all order-preserving functions  $\sigma : P\rightarrow\P$, i.e., if $s<_Pt$ then 
$\sigma(s)\leq\sigma(t)$. The reciprocity theorem for $P$-partitions~\cite[Theorem~4.5.4]{st0} implies that 
$$
\omega X_P=\overline{X}_P.
$$
Now apply $\omega$ to Eq.~\eqref{acyclic}, we get
$$
\omega X_G(\x,t)=\sum_{\o}t^{\asc_G(\o)}\omega X_{\bar{\o}}=\sum_{\o}t^{\asc_G(\o)}\overline{X}_{\bar{\o}},
$$
where $\o$ summed over all acyclic orientations of $G$. Hence $\overline{X}_G({\bf x},t)=\omega X_G({\bf x},t)$, as desired.
\end{proof}

 For $\pi \in \S_n$, the {\em $G$-inversion number} of $\pi$ is
$$
\inv_G(\pi):=|\{ (i,j) : i<j, \,\,\pi(i) > \pi(j) \mbox{ and } \{\pi(i),\pi(j) \} \in E(G) \}|.
$$
For $\pi \in \S_n$ and $P$ a poset on $[n]$, the {\em $P$-descent set} of $\pi$ is 
$$ \Des_P(\pi ):= \{ i \in [n-1] : \pi(i) >_P \pi(i+1) \}.$$ 
Define the {\em incomparability graph} $\inc(P)$  of a poset $P$ on $[n]$ to be  the graph with vertex set $[n]$ and edge set  $\{\{a,b\} : a \not\le_P b \mbox{ and } b \not\le_P a \}$. 

Shareshian and Wachs~\cite[Theorem~4.15]{sw3}
stated the following
fundamental quasisymmetric function basis decomposition of the Chromatic quasisymmetric functions, which refines the result of Chow~\cite[Corollary~2]{ch}. 
%The proof follows the same path as Chow's proof of his Corollary~2.
 
\begin{thm}[Shareshian-Wachs] \label{main2} Let  $G$ be the incomparability graph of a  poset $P$ on $[n]$.   Then 
$$\omega X_G(\x,t) = \sum_{\pi \in \S_n} t^{\inv_G(\pi)} F_{n,\Des_{P}(\pi)}.$$
\end{thm}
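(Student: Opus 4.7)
The plan is to combine the reciprocity theorem with Stanley's fundamental lemma of $P$-partitions. By Theorem~\ref{recipro}, we have $\omega X_G(\x,t) = \overline{X}_G(\x,t) = \sum_{\o} t^{\asc_G(\o)} \overline{X}_{\bar\o}$, where the sum runs over acyclic orientations $\o$ of $G=\inc(P)$. To each $\pi\in\S_n$ I associate the orientation $\o(\pi)$ of $G$ defined by declaring $(a,b)\in\o(\pi)$ iff $a$ precedes $b$ in $\pi$, for $\{a,b\}\in E(G)$. The map $\pi\mapsto(\o(\pi),\pi)$ is a bijection from $\S_n$ onto $\bigsqcup_\o\L(\bar\o)$, and one verifies directly that $\asc_G(\o(\pi))=\inv_G(\pi)$, since both quantities count the pairs $(s,t)$ with $s<t$, $\pi(s)>\pi(t)$, and $\{\pi(s),\pi(t)\}\in E(G)$.

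Next, for each acyclic orientation $\o$, I would apply the fundamental lemma of $P$-partitions to $\overline{X}_{\bar\o}$ with a natural labeling $\omega_\o$ of $\bar\o$ to be chosen carefully. The lemma yields
\[
\overline{X}_{\bar\o} \;=\; \sum_{\rho\in\L(\bar\o)} F_{n,\,D_{\omega_\o}(\rho)},
\]
where $D_{\omega_\o}(\rho)$ is the descent set of the word $\omega_\o(\rho_1)\cdots\omega_\o(\rho_n)$. The critical step is to pick $\omega_\o$ as any linear extension of the relation $<_R$ on $[n]$ defined by $a<_R b$ iff $a<_{\bar\o}b$, or else $a<_P b$ with $a,b$ incomparable in $\bar\o$. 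Such a labeling is natural for $\bar\o$ and compatible with $P$ on the $\bar\o$-incomparable pairs, which forces $D_{\omega_\o}(\rho)=\Des_P(\rho)$: at a cover relation of $\bar\o$ the two endpoints are $P$-incomparable (no descent on either side), while at a $\bar\o$-incomparable adjacency the endpoints are $P$-comparable and $\omega_\o$ ranks them exactly as $P$ does.

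The main obstacle will be establishing that $<_R$ is actually acyclic, so that the linear extension $\omega_\o$ exists. The idea is to examine a hypothetical minimal cycle in $<_R$: consecutive edges of the same type can be collapsed by transitivity of $\bar\o$, or by $P$-transitivity combined with a case split on whether the two new endpoints remain $\bar\o$-incomparable, so the minimal cycle must strictly alternate $<_{\bar\o}$- and $<_P$-edges. But a $<_P$-edge in $<_R$ demands $\bar\o$-incomparable endpoints, while the surrounding $<_{\bar\o}$-edges together with acyclicity of $\bar\o$ force, through transitive closure, those same endpoints to become $\bar\o$-comparable, a contradiction. With acyclicity of $<_R$ in hand, combining all the ingredients and summing over $\pi$ via the bijection of the first paragraph gives the desired identity
\[
\omega X_G(\x,t) \;=\; \sum_{\pi\in\S_n} t^{\inv_G(\pi)}\, F_{n,\Des_P(\pi)}.
\]
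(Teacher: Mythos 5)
Your overall strategy (the paper's reciprocity theorem, the decomposition over acyclic orientations, then the fundamental lemma of $P$-partitions with a labeling interpolating between $\bar\o$ and $P$) is a reasonable route — note the paper itself gives no proof of Theorem~\ref{main2}, it only cites Shareshian--Wachs — but two of your key identifications are false under this paper's conventions, and they do not cancel. With the compatibility convention used here ($(j,i)\in\o$ forces $\k(i)<\k(j)$), an edge $\{a,b\}$ with $a<b$ is an ascent of an $\o$-compatible coloring exactly when $(a,b)\in\o$; hence for your orientation $\o(\pi)$ (each edge directed toward the later letter) one gets $\asc_G(\o(\pi))=|E(G)|-\inv_G(\pi)$, not $\inv_G(\pi)$. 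Already for $P$ the two-element antichain and $\pi=12$ one has $\inv_G(\pi)=0$ but $\asc_G(\o(\pi))=1$; to match the statistics you must direct each edge from the later letter to the earlier one, so that the \emph{reverse} word of $\pi$ is the linear extension of $\bar\o$. Likewise, the expansion $\overline{X}_{\bar\o}=\sum_{\rho\in\L(\bar\o)}F_{n,D_{\omega_\o}(\rho)}$ is the fundamental lemma in the increasing convention for fundamental quasisymmetric functions, whereas this paper's $F_{n,S}$ is built from weakly \emph{decreasing} sequences, so the block of a linear extension comes out as $F_{n,S}$ with the positions of $S$ reversed (equivalently, one must read $\rho$ backwards). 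Concretely, take $P$ on $[3]$ with the single relation $1<_P3$ and the orientation with $\bar\o$ given by $2<1$, $2<3$; with your labeling, the linear extension $231$ contributes $\sum_{\k(1)>\k(3)\ge\k(2)}\x_\k=F_{3,\{1\}}$, not $F_{3,\Des_P(231)}=F_{3,\{2\}}$, and indeed $\overline{X}_{\bar\o}=F_{3,\emptyset}+F_{3,\{1\}}\ne F_{3,\emptyset}+F_{3,\{2\}}$. These two reversal slips are independent errors, and checking a nonsymmetric example shows they do not simply compensate each other; the bookkeeping has to be redone consistently with the paper's definitions of $\asc_G$ and $F_{n,S}$.

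The second genuine gap is the acyclicity of your auxiliary relation $<_R$, which you correctly single out as the crux but do not prove. After reducing to an alternating cycle, you assert that the surrounding $<_{\bar\o}$-arrows force the endpoints of a $<_P$-arrow to become $\bar\o$-comparable; but in an alternating cycle $a_1<_Pb_1<_{\bar\o}a_2<_Pb_2<_{\bar\o}a_1$ the relations $b_1<_{\bar\o}a_2$ and $b_2<_{\bar\o}a_1$ say nothing, by transitivity alone, about the pairs $\{a_1,b_1\}$ and $\{a_2,b_2\}$, so the stated contradiction does not follow. Any correct argument must use that $\bar\o$-relations are generated by directed edges of $G=\inc(P)$, i.e.\ by $P$-incomparable pairs, in combination with transitivity of $P$; this is a real lemma requiring a careful proof, not a formality. (By contrast, your descent-matching step is essentially sound once the conventions are fixed: consecutive $\bar\o$-comparable entries of a linear extension form a cover, covers of $\bar\o$ are single $\o$-edges and hence $P$-incomparable, and $\bar\o$-incomparable adjacent entries are $P$-comparable and ranked by $\omega_\o$ as in $P$.) As written, then, the proof does not go through: the statistic and descent-set identifications are off by complementation/reversal, and the acyclicity lemma on which the whole labeling construction rests is unproven.
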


%Now, we are ready for the proof of  Theorem~\ref{inter:des2}.
%
%\begin{figure}[h]
%\setlength {\unitlength} {1mm}
%\begin {picture} (98,15) \setlength {\unitlength} {1.2mm}
%\thinlines
%\put(8,5){\circle*{1.3}}\put(16,5){\red{\circle*{1.3}}}
%\put(8,5){\line(1,0){8}}
%
%\put(7,1){$1$}\put(15,1){$2$}\put(23,1){$3$}\put(31,1){$4$}\put(39,1){$5$}\put(47,1){$6$}
%\put(55,1){$7$}\put(63,1){$8$}\put(71,1){$9$}
%
%\put(24,5){\circle*{1.3}}\put(32,5){\circle*{1.3}}\put(40,5){\blue{\circle*{1.3}}}
%\put(24,5){\line(1,0){16}}
%
%\put(48,5){\circle*{1.3}}\put(56,5){\red{\circle*{1.3}}}
%\put(48,5){\line(1,0){8}}
%
%\put(64,5){\blue{\circle*{1.3}}}\put(72,5){\red{\circle*{1.3}}}
%%\put(45,2){\circle*{1.3}}
%%\put(45,2){\line(0,1){8}}\put(45,10){\circle*{1.3}}\put(46,10){$1$}\put(46,3){$3$}
%%
%%\put(38,3){$4$}\put(29.5,3){$5$}\put(21,3){$6$}\put(53,3){$2$}\put(37.5,10){$7$}
%%
%%\put(45,2){\line(1,0){8}}\put(53,2){\circle*{1.3}}
%%\put(45,2){\line(-1,0){24}}\put(21,2){\circle*{1.3}}\put(37,2){\circle*{1.3}}\put(29,2){\circle*{1.3}}
%%\put(37,2){\line(0,1){8}}\put(37,10){\circle*{1.3}}
%\end{picture}
%\caption{The graph $G^c_{9,2}$ with $c=0\red{1}00\blue{2}0\red{1}\blue{2}\red{1}$.}
%\label{t1}
%\end{figure}

\begin{proof}[{\bf Proof of Theorem~\ref{inter:des2}}]
We will use the colored banner interpretation of $Q_{n,k,\Vec{\beta}}$.
Let $c=c_1c_2\ldots c_n$  be a word of length $n$ over $\{0\}\cup[l-1]$.  Define $P^{c}_{n,k}$ to be the poset on vertex set $[n]$ such that $i<_P j$ in $P^{c}_{n,k}$ if and only if $i<j$ and either $c_i\neq0$ or $j-i\geq k$. Let $G^c_{n,k}$ be the incomparability graph of $P^{c}_{n,k}$. 
\begin{figure}[h]
\setlength {\unitlength} {1mm}
\begin {picture} (98,15) \setlength {\unitlength} {1.2mm}
\thinlines
\put(8,5){\circle*{1.3}}\put(16,5){\red{\circle*{1.3}}}
\put(8,5){\line(1,0){8}}

\put(7,1){$1$}\put(15,1){$2$}\put(23,1){$3$}\put(31,1){$4$}\put(39,1){$5$}\put(47,1){$6$}
\put(55,1){$7$}\put(63,1){$8$}\put(71,1){$9$}

\put(24,5){\circle*{1.3}}\put(32,5){\circle*{1.3}}\put(40,5){\blue{\circle*{1.3}}}
\put(24,5){\line(1,0){16}}

\put(48,5){\circle*{1.3}}\put(56,5){\red{\circle*{1.3}}}
\put(48,5){\line(1,0){8}}

\put(64,5){\blue{\circle*{1.3}}}\put(72,5){\red{\circle*{1.3}}}

\end{picture}
\caption{The graph $G^c_{9,2}$ with $c=0\red{1}00\blue{2}0\red{1}\blue{2}\red{1}$.}
\label{t1}
\end{figure}

It is not difficult to see that
$$
\overline{X}_{G^c_{n,2}}({\bf x},t)=\sum_{B}\wt(B),
$$
where the sum is over all colored banners $B$ such that $B(i)$ is $c_i$-colored. Theorems~\ref{recipro}  and~\ref{main2} together gives
$$
\overline{X}_{G^c_{n,2}}(\x,t) = \sum_{\pi \in \S_n} t^{\inv_{G^c_{n,2}}(\pi)} F_{n,\Des_{P^c_{n,2}}(\pi)},
$$
which would finish the proof once we can verify that
for $\pi\in C_l\wr\S_n$, 
\begin{equation}\label{gra:rawl}
\inv_{<k}(\pi)=\inv_{G^{c}_{n,k}}(|\pi|)\quad\text{and}\quad\Des_{\geq k}(\pi)=\Des_{P^{c}_{n,k}}(|\pi|)
\end{equation}
if $c=c_1c_2\ldots c_n$ is defined by  the identification 
$$
\{1^{c_1},2^{c_2},\ldots,n^{c_n}\}=\{\pi_1,\pi_2,\ldots,\pi_n\}.
$$
\end{proof}

\subsection{Mahonian statistics on colored permutation groups} A statistic $\st$ on the colored permutation group $C_l\wr\S_n$ is called {\em Mahonian} if 
$$
\sum_{\pi\in C_l\wr\S_n}q^{\st(\pi)}=[l]_q[2l]_q\cdots[nl]_q.
$$
The {\em flag major index} of a colored permutation $\pi$, denoted $\fmaj(\pi)$, is 
$$
\fmaj(\pi):=l\cdot\maj(\pi)+\sum_{i=1}^n\epsilon_i.
$$
It is known (cf.~\cite{fr}) that the flag major index is Mahonian. 
Note that 
$$
\sum_{\pi\in C_l\wr\S_n}t^{\fexc(\pi)}r^{\fix(\pi)}q^{\fmaj(\pi)}=A_n^{(l)}(tq,r,q^l).
$$
When $l=1$, $\rmaj_k$ is Mahonian for each $k$ (see~\cite{ra}). Define the {\em flag Rawlings major index} of $\pi\in C_l\wr\S_n$, $\fmaj_k(\pi)$, by 
$$
\fmaj_k(\pi):=l\cdot\rmaj_k(\pi)+\sum_{i=1}^n\epsilon_i.
$$
We should note that $\fmaj\neq\fmaj_1$ if $l\geq2$.
By Corollary~\ref{three:stats} we see that $\fmaj_2$ is equidistributed with $\fmaj$ on colored permutation groups and thus is also Mahonian.
More general, we have the following result.
\begin{thm}
The flag Rawlings major index $\fmaj_k$ is Mahonian for any $l,k\geq1$.
\end{thm}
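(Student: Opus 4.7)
The plan is to prove the theorem by induction on $n$, generalizing Rawlings' classical insertion argument for the Mahonianity of $\rmaj_k$ on $\S_n$. The base case $n=1$ is immediate since $\sum_{c=0}^{l-1}q^{c}=[l]_q$. For the inductive step I will use the natural bijection $C_l\wr\S_n \leftrightarrow C_l\wr\S_{n-1}\times[n]\times\{0,1,\ldots,l-1\}$ sending $\pi$ to the triple $(\pi',j,c)$, where $\pi'$ is obtained from $\pi$ by removing the unique letter $n^c$ of maximal absolute value, $j$ is its position in $\pi$, and $c$ is its color. The heart of the argument is an \emph{insertion lemma}: for every $\pi'\in C_l\wr\S_{n-1}$, as $(j,c)$ ranges over $[n]\times\{0,\ldots,l-1\}$, the difference $\fmaj_k(\pi)-\fmaj_k(\pi')$ takes each value in $\{0,1,\ldots,nl-1\}$ exactly once. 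Granted this, one immediately obtains $\sum_{\pi}q^{\fmaj_k(\pi)}=[nl]_q\sum_{\pi'}q^{\fmaj_k(\pi')}$, and the inductive hypothesis completes the argument.

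Tracking the three components $l\cdot\maj_{\geq k}$, $l\cdot\inv_{<k}$ and $\sum_i\epsilon_i$ separately yields an explicit formula for this difference. For $c\neq 0$ it equals $l\delta_j+c$, where $\delta_n:=0$ and $\delta_j:=j+d_j-(j-1)\mathbb{1}[j-1\in S]$ for $1\leq j\leq n-1$, with $S:=\Des_{\geq k}(\pi')$ and $d_j:=|\{i\in S: i\geq j\}|$. For $c=0$ it equals $l\delta^0_j$, where $\delta^0_n:=0$ and for $1\leq j\leq n-1$ the quantity $\delta^0_j$ acquires an additional term $E_j:=|\{v\in[n-k+1,n-1]:\text{position of }v\text{ in }\pi' \geq j\}|$ coming from $\inv_{<k}$ and a conditional term $j\cdot\mathbb{1}[|\pi'_j|\leq n-k]$ coming from $\maj_{\geq k}$, so that $\delta^0_j=j\mathbb{1}[|\pi'_j|\leq n-k]+d_j-(j-1)\mathbb{1}[j-1\in S]+E_j$. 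The insertion lemma then splits into two sub-claims: (A) $\{\delta_j:j\in[n]\}=\{0,1,\ldots,n-1\}$, so that each $c\in\{1,\ldots,l-1\}$ contributes the residue class $c\pmod{l}$ inside $\{0,\ldots,nl-1\}$; and (B) $\{\delta^0_j:j\in[n]\}=\{0,1,\ldots,n-1\}$, contributing residue $0$.

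Claim (A) admits a clean proof: writing $S=\{s_1<\cdots<s_m\}$, the values $\delta_{s_i+1}=m-i+1$ bijectively enumerate $\{1,\ldots,m\}$, while on $T:=[n-1]\setminus\{s_i+1:1\leq i\leq m\}$ the auxiliary function $g(j):=j+d_j$ is weakly increasing with unit jumps precisely at indices $j\notin S$, so $T$ picks out the unique left endpoint of each plateau of $g$ and gives a bijection onto $\{m+1,\ldots,n-1\}$; adjoining $\delta_n=0$ completes $\{0,1,\ldots,n-1\}$. The main obstacle is claim (B), the colored refinement of Rawlings' classical insertion lemma. The delicate point is the coupled behaviour of $E_j$ and the indicator $\mathbb{1}[|\pi'_j|\leq n-k]$, both of which depend on where the near-maximum values $V:=[n-k+1,n-1]$ sit inside $\pi'$. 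I would follow Rawlings' original pigeonhole/sweep argument, analyzing how $\delta^0_j$ evolves under each of the four configurations (near vs.\ far value at position $j$; descent vs.\ non-descent at position $j-1$) and verifying bijectivity by explicit inversion. An alternative route is to fix the color pattern $\epsilon\in\{0,\ldots,l-1\}^n$, observe that $\fmaj_k(\pi)=l\cdot\rmaj^{U(\epsilon)}_k(|\pi|)+\sum_i\epsilon_i$ where $U(\epsilon):=\{i:\epsilon_i=0\}$ and $\rmaj^U_k$ is the natural ``partial-Rawlings'' statistic on $\S_n$ (using ordinary descents at positions outside $U$ and the Rawlings contribution at positions inside $U$), and to prove $\sum_{\sigma\in\S_n}q^{\rmaj^U_k(\sigma)}=[n]_q!$ for every $U\subseteq[n]$ by an analogous insertion induction; summing $\sum_\epsilon q^{\sum_i\epsilon_i}=[l]_q^n$ then yields the Mahonian product $[n]_{q^l}!\cdot[l]_q^n=[l]_q[2l]_q\cdots[nl]_q$.
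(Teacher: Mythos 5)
Your reduction to the insertion lemma is the crux, and while your claim (A) and its proof sketch are correct, claim (B) --- the part you explicitly leave to ``Rawlings' original pigeonhole/sweep argument'' --- is false, so the proposed induction does not go through. Take $l=2$, $k=3$, $n=3$ and $\pi'=2^1\,1^1$, so that $S=\Des_{\geq 3}(\pi')=\{1\}$ (an automatic descent, since $\epsilon_1\neq 0$), $d_1=1$ and $E_1=2$ (both values $1,2$ lie in $[n-k+1,n-1]$). Your own formula gives $\delta^0_1=0+1-0+2=3>n-1$, and direct computation confirms it: inserting $3^0$ at positions $1,2,3$ increases $\fmaj_3$ by $6,0,0$, while inserting $3^1$ increases it by $5,3,1$, so the multiset of increments is $\{0,0,1,3,5,6\}$ rather than $\{0,1,\dots,5\}$, and the factorization $\sum_{\pi}q^{\fmaj_k(\pi)}=[nl]_q\sum_{\pi'}q^{\fmaj_k(\pi')}$ already fails at this $\pi'$. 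The obstruction is exactly the coupling you flagged but did not resolve: a positively colored letter whose value lies in $[n-k+1,n-1]$ creates a descent of $\pi'$ that survives in $d_j$ \emph{and} is counted again by $E_j$, a double contribution with no analogue in the uncolored case; Rawlings' sweep has no mechanism to absorb it once $k\geq 3$.

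Your fallback route breaks at the same point: the ``partial Rawlings'' statistic $\rmaj^U_k$ attached to a set $U$ of \emph{positions} is not Mahonian in general; for $n=3$, $k=3$, $U=\{1\}$ its distribution over $\S_3$ is $1+q+2q^2+q^3+q^4\neq[3]_q!$, which is just the counterexample above read with the colors fixed by position. The proof in the paper is of a different nature and needs no insertion lemma: one fixes the colors on the \emph{values} (the word $c$ with $\{1^{c_1},\dots,n^{c_n}\}=\{\pi_1,\dots,\pi_n\}$), uses the identification \eqref{gra:rawl} to convert $\maj_{\geq k}+\inv_{<k}$ into $\maj_{P^{c}_{n,k}}+\inv_{\inc(P^{c}_{n,k})}$ for the poset $P^{c}_{n,k}$, and then invokes Kasraoui's result \eqref{unit:interval} on Mahonian maj-inv statistics for such orders to get $[1]_{q^l}\cdots[n]_{q^l}$ for each fixed $c$; summing $q^{\sum_i c_i}$ over the $l^n$ color words yields $[l]_q[2l]_q\cdots[nl]_q$. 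So a repair of your plan would have to fix the colors of the values rather than of the positions, and at that point the statement you need is exactly this fixed-$c$ Mahonian identity, not a colored copy of Rawlings' insertion lemma.
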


\begin{proof}
A poset $P$ on $[n]$ satisfies the following conditions
\begin{itemize}
\item[(1)] $x<_Py$ implies $x<_{\N}y$
\item[(2)] if the disjoint union (or direct sum) $\{x<_Pz\}+\{y\}$ is an induced subposet of $P$ then $x<_{\N}y<_{\N}z$
\end{itemize}
is called a natural unit interval order. It was observed in~\cite{sw3} that if $P$ is a natural unit interval order, then by a result of Kasraoui~\cite[Theorem~1.8]{ka},
\begin{equation}\label{unit:interval}
\sum_{\pi\in\S_n}q^{\inv_{\inc(P)}(\pi)+\maj_P(\pi)}=[1]_q\cdots[n]_q
\end{equation}
with $\maj_P(\pi):=\sum_{i\in\Des_P(\pi)}i$. Denote by $W_n^l$ the set of all words of length $n$ over $\{0\}\cup[l-1]$. For each  $c=c_1\cdots c_n$ in $W_n^l$, let $P^{c}_{n,k}$ and $G^{c}_{n,k}$ be the poset and the graph defined in the proof of Theorem~\ref{inter:des2}, respectively. Note that $P^{c}_{n,k}$ is a natural unit interval order. Thus
\begin{align*}
\sum_{\pi\in C_l\wr\S_n}q^{\fmaj_k}&=\sum_{\pi\in C_l\wr\S_n}q^{l(\rmaj_{\geq k}(\pi)+\inv_{<k}(\pi))+\sum_{i=1}^n\epsilon_i}\\
&=\sum_{ c\in W_n^l}\sum_{\pi\in\S_n}q^{l(\inv_{G^{c}_{n,k}}(\pi)+\maj_{P^{c}_{n,k}}(\pi))+\sum_i c_i}\qquad\quad\text{(by~\eqref{gra:rawl})}\\
&=\sum_{ c\in W_n^l} q^{\sum_i c_i}[1]_{q^l}\cdots[n]_{q^l}\qquad\quad\text{(by~\eqref{unit:interval})}\\
&=(1+q+\cdots+q^{l-1})^n[1]_{q^l}\cdots[n]_{q^l}=[l]_q[2l]_q\cdots[nl]_q.
\end{align*}
\end{proof}

\subsection*{Acknowledgement}
The author would like to thank Jiang Zeng for numerous remarks and advice.

\end{document}